\numberwithin{equation}{section}
\def\H{\mathcal H}
\def\R{\mathbb R}
\def\N{\mathbb N}
\def\dist{\hbox{dist}}
\def\e{\varepsilon}
\def\s{\sigma}
\def\S{\Sigma}
\def\vphi{\varphi}
\def\Div{{\rm div}\,}
\def\om{\omega}
\def\l{\lambda}
\def\g{\gamma}
\def\k{\kappa}
\def\Om{\Omega}
\def\de{\delta}
\def\Id{{\rm Id}}
\def\spt{{\rm spt}}
\def\pa{\partial}
\def\F{\mathcal{F}}
\newcommand{\hd}{\mathrm{hd}}
\def\bd{{\rm bd}\,}
\def\X{\boldsymbol{\mathcal E}}
\def\cof{{\rm cof}\,}
\renewcommand{\a}{\alpha}
\newcommand{\D}{\Delta}
\renewcommand{\l}{\lambda}
\newcommand{\Lip}{{\rm Lip}}
\renewcommand{\t}{\tau}
\renewcommand{\o}{\omega}
\renewcommand{\Div}{{\rm div \,}}
\newcommand{\ov}{\overline}
\newcommand{\diam}{\mathrm{diam}}
\newcommand{\cc}{\subset\subset}
\def\bd{{\rm bd}\,}
\def\INT{{\rm int}\,}
\def\PHI{\mathbf{\Phi}}
\def\PSI{\mathbf{\Psi}}
\def\C{\mathbf{C}}
\def\D{\mathbf{D}}
\theoremstyle{plain}
\newtheorem{theorem}{Theorem}[section]
\newtheorem{lemma}[theorem]{Lemma}
\newtheorem{proposition}[theorem]{Proposition}
\newtheorem{notation}[theorem]{Notation}
\newtheorem{definition}[theorem]{Definition}
\newtheorem{remark}[theorem]{Remark}
\title{On the shape of capillarity droplets in a container}
\author{F. Maggi}
\author{C. Mihaila}
\begin{document}

\begin{abstract}
{\rm We provide a quantitative description of global minimizers of the Gauss free energy for a liquid droplet bounded in a container in the small volume regime.}
\end{abstract}

\maketitle

\section{Introduction} Our aim is to provide a quantitative description of capillarity droplets in a bounded container. We work in the classical setting of capillarity theory based on the minimization of Gauss free energy under a volume constraint. In this framework, denoting by $A$ and $E$ two open bounded sets with Lipschitz boundary in $\R^n$ with $E\subset A$ (so that $E$ is the region occupied by a liquid droplet inside a container $A$), one looks for volume-constrained global/local minimizers or stable/stationary points of the energy
\[
\F_{A,\s}(E)+\int_E\,g(x)\,dx\,,
\]
where $g:A\to\R$ is a bounded potential energy density and where $\F_{A,\s}$ is the surface tension energy of the droplet,
\begin{equation}
  \label{surface energy}
  \F_{A,\s}(E)=\H^{n-1}(A\cap\pa E)+\int_{\pa A\cap\pa E}\s\,d\H^{n-1}\,.
\end{equation}
Here $\H^{n-1}$ denotes the $(n-1)$-dimensional Hausdorff measure in $\R^n$, so that $\H^{n-1}(A\cap\pa E)$ accounts for the surface tension energy of liquid/air internal interface of the droplet, while $\s:\pa A\to(-1,1)$ is a given function, modeling the relative adhesion coefficient between the liquid droplet and the solid walls of the container. In this way, $\int_{\pa A\cap\pa E}\s\,d\H^{n-1}$ accounts for the total surface tension energy of the liquid/solid boundary interface. Typically, one considers liquid droplets under the action of gravity, a situation that corresponds to taking $n=3$ and $g(x)=c\,x_n$ for a positive constant $c$.

There is a rich variational theory concerning the functional $\F_{A,\s}$. A portion of the classical literature on the problem assumes the existence and the smoothness of minimizers, and starting from these assumptions moves to their qualitative description. An excellent overview on this family of results is provided by the book of Finn \cite{Finn}. Existence theories have to be formulated in the setting of Geometric Measure Theory, and a particularly suitable framework is that of sets of finite perimeter and functions of bounded variation, see, e.g. \cite[Chapter 19]{maggiBOOK}. Since the boundary $\pa E$ of a minimizer of finite perimeter $E$ will just be a countable union of compact subsets of $C^1$-hypersurfaces, then one faces the problem of showing additional regularity for minimizers, which is a crucial step for understanding the relation of the mathematical model with the physical world.

The regularity issue can be trivialized by exploiting symmetries. For example, in the sessile and pendant liquid droplet problems it can be proved that minimizers are rotationally symmetric, a property which easily implies smoothness; see for example \cite{GonzalezREGOLARITAGOCCIA}. In the case of a generic container one cannot exploit symmetry to simplify the regularity problem. This regularity problem has then drawn the attention of several authors \cite{taylor78,caffarellifriedman85,gruterjost,gruter,gruter2,gruter3,luckhaus,caffarellimellet,dephilippismaggiCRELLE,dephilippismaggiARMA}.

For example, if $A$ has boundary of class $C^{1,1}$, $g$ is bounded and $\s$ is a Lipschitz function, then for each volume-constrained local minimizer $E$ of $\F_{A,\a}$ there exists a closed subset $\S$ of $M=\ov{A\cap\pa E}$ such that $M\setminus\S$ is a $C^{1,\a}$-hypersurface with boundary (for every $\a\in(0,1)$). Moreover, the set of boundary points $\bd(M\setminus\S)$ of $M\setminus\S$ is contained in $\pa A$, and if $\nu_G$ denotes the outer unit normal to $G\subset \R^n$, then {\it Young's law} holds, i.e.
\[
\nu_A(x)\cdot\nu_E(x)=\s(x)\,,\qquad\forall x\in \bd(M\setminus\S)\,.
\]
Finally, $\S$ is empty if $n\le 3$, it is discrete if $n=4$ and it has Hausdorff dimension less than $n-4$ otherwise. This result was proved in the case $n=3$ by Taylor \cite{taylor78}, and in higher dimension by Luckhaus \cite{luckhaus}. Luckhaus' result was unawares rediscovered as \cite[Corollary 1.4]{dephilippismaggiARMA} in a study on Young's law for anisotropic surface energies. (We also notice that in the above papers the regularity result is obtained with $\a=1/2$, although one can improve this to every $\a<1$ by boundary elliptic regularity.)

Capillarity phenomena are characterized by the dominance of the surface tension energy on the bulk/potential energy term. This is typically the case when the volume parameter $m$ is suitably small with respect to the various data of the problem and then the surface energy is order $m^{(n-1)/n}>>m$, while the potential energy is of order $m$. In the case where there is no container, the problem of describing global minimizers in the small volume regime has been addressed in \cite{FigalliMaggiARMA}. There it is shown that if $g$ is a locally bounded potential energy density with the property that $g(x)\to\infty$ as $|x|\to\infty$ (in particular, the gravitational potential is not allowed in this simplified model),  then there exists $m_0=m_0(n,g)>0$ such that every minimizer $E_m$ in
\[
\inf\Big\{P(E)+\int_E\,g(x)\,dx:|E|=m\Big\}\,,
\]
with $m\le m_0$ is connected and satisfies, for some $y_m\in\R^n$,
\begin{equation}
  \label{figallimaggi1}
  \Big|\Big(\frac{E_m-y_m}{m^{1/n}}\Big)\Delta \frac{B}{\om_n^{1/n}}\Big|\le C(n,g)\,m^{1/2n}\,,
  \qquad
  \hd\Big(\frac{\pa E_m-y_m}{m^{1/n}},\frac{\pa B}{\om_n^{1/n}}\Big)\le C(n,g)\,m^{1/n^2}\,,
\end{equation}
where $B=\{x\in\R^n:|x|<1\}$, $\om_n=|B|$, and $\hd(X,Y)$ denotes the Hausdorff distance between $X,Y\subset\R^n$, see \eqref{hd def} below. Moreover, if $g\in C^{1,\a}_{{\rm loc}}(\R^n)$ for some $\a>0$, then $\pa E_m$ is a $C^2$-hypersurface whose second fundamental form $\nabla\nu_{E_m}(x)$ at $x\in T_x\pa E_m$, after a suitable rescaling, is uniformly close to a suitable multiple of the identity tensor on $T_x\pa E_m$, with the quantitative estimate
\begin{equation}
  \label{figallimaggi2}
  \max_{x\in\pa E_m}\|m^{1/n}\,\nabla\nu_{E_m}(x)-c(n)\,\Id_{T_x\pa E_m}\|\le C(n,g)\,m^{2/(n+2)}\,.
\end{equation}
In particular, $E_m$ is convex. What is crucial here is the quantitative aspect of \eqref{figallimaggi1} and \eqref{figallimaggi2}. We are not only asserting a proximity result, we are also quantifying the distance (measured in various ways) from being a ball. In passing, let us also mention that this kind of analysis has been recently extended to the case of local minimizers, and even of stationary points, in \cite{ciraolomaggi}.

Returning to the case where a container $A$ is present, our goal here is to quantitatively describe the shape of global minimizers $E_m$ of
\begin{equation}
  \label{variational problem m}
  \g(m)=\inf\Big\{\F_{A,\s}(E)+\int_E\,g(x)\,dx:E\subset A\,,|E|=m\Big\}\,,
\end{equation}
in the small volume regime. When $g$ and $\s$ vanish identically, then \eqref{variational problem limiting} reduces to the well-known relative isoperimetric problem in $A$, and global minimizers in \eqref{variational problem limiting} are called {\it isoperimetric regions in $A$}. In this case Fall \cite{fall} has shown that isoperimetric regions converge, as $m\to 0^+$, to boundary points of $A$ where the mean curvature of $\pa A$ achieves its maximum. When $A$ is a convex polytope (and again $g$ and $\s$ vanish identically) then an analogous result was obtained by Ritor\'e and Vernadakis, who showed that isoperimetric regions converge to vertices of $\pa A$ with smallest solid angle \cite{ritorevernadakis}.

We thus expect, also the general case where $\s$ and $g$ are non-trivial, that $E_m$ should have small diameter and that it should concentrate around a boundary point of $A$. Specifically, (global) energy minimization should favor those points in $\pa A$ where $\s$ achieves its minimum value
\[
\s_0=\min_{\pa A}\s\in(-1,1)\,,
\]
and correspondingly $E_m$ should be contained in a ball of radius $O(m^{1/n})$ and center $p_m\in\pa A$ such that $\s(p_m)\to\s_0$ as $m\to 0^+$. In particular, in the small volume regime and if $\s$ has a strict minimum point on $\pa A$, then neither the potential energy nor the mean curvature of $\pa A$ should play a role in determining the asymptotic position of global minimizers.

Next, assuming this concentration at the boundary to happen, we would expect the blow-ups
\[
\frac{E_m-p_m}{m^{1/n}}
\]
to converge (as $m\to 0$) to minimizers in the sessile droplet problem with no gravity and constant adhesion coefficient $\s_0$. To be more precise, let us fix a reference half-space $H$ to be
\[
H=\{x\in\R^n:x_n>0\}\,.
\]
and given $\tau\in(-1,1)$ let us consider the variational problem
\begin{equation}
  \label{variational problem limitingx}
  \psi(\tau)=\inf\Big\{\F_{H,\tau}(F):F\subset H\,,|F|=1\Big\}\,.
\end{equation}
If we set
\begin{equation}
  \label{def K}
S(\tau)=\{x\in B:x_n>-\tau\}\,,
\qquad
K(\tau)=\tau\,e_n+\frac{S(\tau)}{|S(\tau)|^{1/n}}\,,
\end{equation}
then $\{z+K(\tau):z\in\pa H\}$ is the family of all minimizers in \eqref{variational problem limitingx}, see e.g. \cite[Theorem 19.21]{maggiBOOK}. We thus expect $(E_m-p_m)/m^{1/n}$ to converge in some sense to $K(\s_0)$. Our main result, which is illustrated in \begin{figure}
  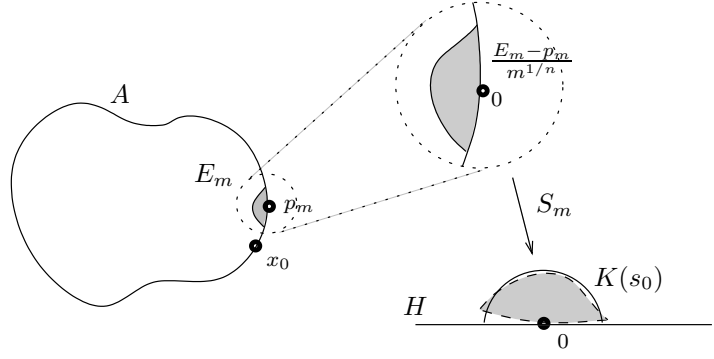\caption{{\small The situation in Theorem \ref{thm main}. A global minimizer $E_m$ is confined in a ball of radius $O(m^{1/n})$ centered at a point $p_m\in\pa A$ such that $\s(p_m)-\s_0=O(m^{1/n})$. In particular, if $\s_0=\s(x_0)$ for a unique $x_0\in\pa\Om$, then $p_m$ has to converge to $x_0$ with a velocity that depends on how fast $\s$ detaches from its minimum value at $x_0$. After a blow-up at $p_m$ to the scale $m^{1/n}$, and after a suitable rigid motion $S_m$, the global minimizer is Hausdorff close to the ideal droplet $K(\s_0)$, and the part of its boundary interior to $A$ is actually $C^{1,\a}$-diffeomorphic (for every $\a\in(0,1)$) to the spherical cap $H\cap\pa K(\s_0)$.}}
  \label{fig uno}
\end{figure}
 Figure \ref{fig uno}, proves that this converge happens in a global $C^{1,\a}$-sense, and it also quantifies the rate of convergence in Hausdorff distance.

\begin{theorem}\label{thm main}
  Let $n\ge 2$, $A$ be a bounded open connected set with boundary of class $C^{1,1}$, let $\s\in{\rm Lip}(\pa A)$ with $-1<\s(x)<1$ for every $x\in\pa A$, and let $g\in{\rm Lip}(A)$. Then there exist positive constants $C_0$ and $m_0$ depending on $A$, $\s$ and $g$ with the following property.

  If $E_m$ is a minimizer in \eqref{variational problem m} with $m\le m_0$, then there exists $p_m\in\pa A$ such that
  \begin{equation}
    \label{main theorem small diameter}
    E_m\subset B_{p_m,C_0\,m^{1/n}}\qquad 0\le \s(p_m)-\s_0\le C_0\,m^{1/n}\,,
  \end{equation}
  and, for a linear isometry $S_m$ of $\R^n$ and with $M_m=\ov{A\cap\pa E_m}$ and $M_0=\ov{H\cap\pa K(\s_0)}$,
  \begin{equation}
    \label{main theorem hd}
    \hd\Big(S_m\Big(\frac{M_m-p_m}{m^{1/n}}\Big),M_0\Big)\le C_0\,m^{1/2\,n^2}\,.
  \end{equation}
  Moreover, $M_m$ is a $C^{1,\a}$-hypersurface with boundary for every $\a\in(0,1)$, the set of boundary points $\bd(M_m)$ is contained in $\pa A$, and there exist a $C^{1,\a}$-diffeomorphism $f_m:M_0\to M_m$  and $y_m\in\pa A$ such that
  \begin{equation}
    \label{diffeo convergence}
    \inf\Big\{\|f_m-\big(y_m+m^{1/n}S\big)\|_{C^1(M_0)}:\mbox{$S:\R^n\to\R^n$ is an isometry}\Big\}=o(m^{1/n})\,,
  \end{equation}
  as $m\to 0^+$,  where this limit relation depends on $A$, $\s$ and $g$ only, but not on the family of minimizers $E_m$.
\end{theorem}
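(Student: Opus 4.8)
The plan is to run the small-volume concentration scheme for capillarity functionals (as in \cite{FigalliMaggiARMA,fall}), adapted to the container $A$ and pushed to a quantitative form. One starts from crude energy bounds. Since $\inf_{\pa A}\s>-1$, the relative isoperimetric inequality in $A$ holds with a positive constant $c_A$ (see \cite[Chapter 19]{maggiBOOK}), so $\g(m)\ge c_A\,m^{(n-1)/n}-\|g\|_\infty\,m$; flattening $\pa A$ near a point where $\s=\s_0$ and inserting a rescaled copy of $K(\s_0)$ gives the matching upper bound $\g(m)\le\psi(\s_0)\,m^{(n-1)/n}+C\,m$, the $O(m)$ error collecting the $C^{1,1}$ curvature of $\pa A$ felt over a set of diameter $O(m^{1/n})$, the Lipschitz oscillation of $\s$, and the potential term. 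A concentration-compactness argument for sets of finite perimeter then already gives $\g(m)\,m^{-(n-1)/n}\to\psi(\s_0)$.

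\emph{Diameter bound and position of $p_m$.} I would pass to unit volume, $\hat E=E_m/m^{1/n}$, in the blown-up container $A/m^{1/n}$ and replace \eqref{variational problem m} by its penalized, unconstrained version; for a bounded penalization constant the two problems have the same minimizers, so $\hat E$ is a $(\Lambda,r_0)$-minimizer of the (locally flattened) capillarity energy with $\Lambda$, $r_0$ controlled uniformly in $m$. The boundary density estimates of Gr\"uter and De Philippis--Maggi then hold with uniform constants; combined with a truncation argument \`{a} la Almgren (to absorb the gap between the density constant and $1$) and the volume constraint, they force $\hat E$ to be essentially connected and of uniformly bounded diameter, i.e. $\diam(E_m)\le C_0\,m^{1/n}$, and to be attached to $\pa A$ on a set of positive $\H^{n-1}$-measure --- an interior configuration would cost $n\,\om_n^{1/n}\,m^{(n-1)/n}>\psi(\s_0)\,m^{(n-1)/n}$. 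Choosing $p_m\in\pa A\cap\spt E_m$ gives the first inclusion in \eqref{main theorem small diameter}. For the estimate on $\s(p_m)$, flattening $\pa A$ at $p_m$ and using $\s\ge\s(p_m)-\Lip(\s)\,\diam(E_m)$ on $\pa A\cap\pa E_m$ yields $\F_{A,\s}(E_m)\ge\psi(\s(p_m))\,m^{(n-1)/n}(1-C\,m^{1/n})$; against the upper bound this forces $\psi(\s(p_m))\le\psi(\s_0)+C\,m^{1/n}$, and since $\tau\mapsto\psi(\tau)$ is smooth and strictly increasing on $(-1,1)$ (by the envelope theorem $\psi'(\tau)$ equals the wetted area of the unit minimizer), one gets $0\le\s(p_m)-\s_0\le C_0\,m^{1/n}$.

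\emph{Quantitative stability.} Let $\tilde E_m$ be the image of $(E_m-p_m)/m^{1/n}$ under the rescaled flattening at $p_m$: a unit-volume set lying in a region that converges to $H$ at rate $m^{1/n}$ in $C^{1,1}$. Refining the energy comparison above gives $\F_{H,\s_0}(\tilde E_m)\le\psi(\s_0)+C\,m^{1/n}$. The decisive ingredient is now the \emph{sharp} quadratic quantitative isoperimetric inequality for the half-space capillarity energy,
\[
\F_{H,\s_0}(F)-\psi(\s_0)\ \ge\ c(n,\s_0)\,\inf_{z\in\pa H}\big|F\,\Delta\,(z+K(\s_0))\big|^{2}\,,\qquad F\subset H,\ |F|=1\,,
\]
which, if not available in the literature in the form required, I would prove by a Cicalese--Leonardi selection principle reducing it to the strict stability (spectral gap of the second variation) of $K(\s_0)$ in \eqref{variational problem limitingx}. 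Applied to $\tilde E_m$ this gives $\inf_{z\in\pa H}|\tilde E_m\,\Delta\,(z+K(\s_0))|\le C\,m^{1/2n}$; the square root here --- a linear bound being false, already for the classical isoperimetric inequality --- is exactly what produces the exponent in \eqref{main theorem hd}. I expect this step, together with the uniform-in-$m$ minimality and diameter bound of the previous step, to be the main obstacle.

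\emph{From $L^1$ to Hausdorff, regularity, and the diffeomorphism.} The uniform density estimates turn $L^1$-proximity into Hausdorff proximity with the standard loss of a power $1/n$, so $\hd(S_m((M_m-p_m)/m^{1/n}),M_0)\le C\,(m^{1/2n})^{1/n}=C_0\,m^{1/2n^2}$, with $S_m$ the rotation taking the tangent half-space of $A$ at $p_m$ onto $H$; this is \eqref{main theorem hd}. The regularity assertions are exactly the Taylor--Luckhaus theorem recalled in the introduction (see \cite{taylor78,luckhaus,dephilippismaggiARMA}): $M_m$ is a $C^{1,\a}$-hypersurface with boundary for every $\a\in(0,1)$, $\bd(M_m)\subset\pa A$, Young's law holds, and all of this comes with estimates depending on $A,\s,g$ only. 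Since the rescaled $M_m$ is Hausdorff-close to $M_0$ and has uniform $C^{1,\a}$ bounds up to the boundary, it is a $C^{1,\a}$-graph of small norm over $S_m^{-1}(M_0)$, and Arzel\`{a}--Ascoli together with uniqueness of the blow-up upgrades the Hausdorff convergence to $C^{1,\b}$-convergence ($\b<\a$) of the rescaled interfaces to $K(\s_0)$; composing the isometry $S_m^{-1}$ with this graph map produces the $C^{1,\a}$-diffeomorphism $f_m:M_0\to M_m$. Taking $y_m=p_m$ and interpolating the uniform $C^{1,\a}$ bound against the Hausdorff rate shows $\|f_m-(p_m+m^{1/n}S_m^{-1})\|_{C^1(M_0)}=o(m^{1/n})$, with the $o(\cdot)$ depending on $A,\s,g$ only, which is \eqref{diffeo convergence}.
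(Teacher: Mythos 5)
Your overall architecture (energy expansion, concentration at a boundary point where $\s$ is nearly minimal, transfer to the half-space, quadratic quantitative stability, density estimates to pass from $L^1$ to Hausdorff distance, boundary regularity plus a graph construction) is the same as the paper's, but two of your steps hide genuine gaps. The first is the concentration/diameter step. You assert that, after blowing up to unit volume, the constrained problem is equivalent to a penalized unconstrained one ``for a bounded penalization constant'', and that density estimates plus a truncation argument then force $\diam(E_m)\le C_0\,m^{1/n}$. Neither half is automatic. The uniform-in-$m$ $(\Lambda,\rho_0)$-minimality is exactly the content of Lemma \ref{lemma almost minimizers}, and its proof (an Almgren-type volume-fixing variation with a vector field $T$ of unit flux through $\pa K$) only works \emph{after} one already knows the qualitative $L^1$-convergence of the rescaled droplets to $K(\s_0)$ from Lemma \ref{thm convergenza a K}; without that information there is no uniform way to restore volume at linear cost, so your scheme is circular as stated. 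Moreover, even granting uniform almost-minimality and density estimates, the truncation differential inequality needs a center around which all but a small fixed fraction of the volume already sits at the correct scale; a priori the minimizer could split into two far-apart chunks of comparable volume, each satisfying the density estimates, and nothing in your paragraph excludes this. This anti-splitting step is precisely what the paper's grid argument (step four of Lemma \ref{thm convergenza a K}, exploiting strict subadditivity of $t\mapsto t^{(n-1)/n}$ through the concavity step) provides, and even then it only yields the scale $m^{1/2n}$, with the optimal $m^{1/n}$ recovered by a bootstrap at the very end of the proof.

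The second gap concerns the stability inequality and the regularity/diffeomorphism package. You invoke the sharp quadratic inequality $\F_{H,\s_0}(F)-\psi(\s_0)\ge c\,\inf_{z\in\pa H}|F\Delta(z+K(\s_0))|^2$ for all unit-volume $F\subset H$, to be proved ``by a selection principle'' if unavailable; this is a substantial unproven ingredient (it would also require the strict stability of $K(\s_0)$), whereas the paper proves the weaker Proposition \ref{lemma psi tau}-(iii), valid under the containment $K(\s_0)/2\subset F$, via the quantitative Wulff inequality of \cite{FigalliMaggiPratelliINVENTIONES} and a calibration, and verifies the containment from the Hausdorff convergence — a route you would still need to reproduce or replace. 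Finally, quoting the Taylor--Luckhaus regularity theorem ``exactly'' does not give that $M_m$ is everywhere a $C^{1,\a}$-hypersurface with boundary: for $n\ge 4$ that theorem allows a nonempty singular set, and its removal for small $m$ requires the $\e$-regularity characterization of the singular set together with Hausdorff proximity to the smooth spherical cap (Lemma \ref{lemma hp thm3.5}). Likewise, the rescaled $M_m$ is not globally a small normal graph over $M_0$ near the contact line $\bd(M_0)\subset\pa H$; the paper needs the almost-normal diffeomorphism construction of \cite{CiLeMaIC1} (Theorem \ref{thm improved convergence to K}), which handles the tangential displacement near $\bd(M_0)$, and your graph-plus-Arzel\`a--Ascoli argument does not cover this boundary region. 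Your interpolation idea for upgrading the Hausdorff rate to the $C^1$ estimate \eqref{diffeo convergence} is fine once such a diffeomorphism with uniform $C^{1,\a}$ bounds is actually in hand.
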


\begin{remark}\label{remark intro}
  {\rm In the course of the proof of Theorem \ref{thm main} we shall prove that
  \begin{equation}
    \label{gamma m espansione}  \g(m)=\psi(\s_0)\,m^{(n-1)/n}\,\Big(1+O(m^{1/n})\Big)\,,
  \end{equation}
  see Remark \ref{rmk finale}. When $g$ and $\s$ are identically zero this formula was proved by Bayle and Rosales \cite{baylerosales}, and the coefficient in front of $O(m^{1/n})$ was identified by Fall in \cite{fall}, thus leading to the above mentioned criterion that, in the small volume regime, isoperimetric regions converge to boundary points of maximal mean curvature.}
\end{remark}

We now describe the proof of Theorem \ref{thm main}. An initial difficulty is excluding that minimizers break down into smaller droplets, or that they take elongated shapes with comparatively larger diameter than volume. This issue is partially addressed by a grid argument (see in particular step four in the proof of Lemma \ref{thm convergenza a K} below) where it is shown the existence of points $x_m\in A$ such that
\begin{equation}
  \label{a casa}
E_m\subset B_{x_m,C\,m^{1/2n}}\,.
\end{equation}
(Here $B_{x,r}=x+B_r$ is the ball of center $x$ and radius $r$ in $\R^n$.) Although the possibility of replacing $x_m\in A$ by $y_m\in\pa A$ with $\s(y_m)-\s_0\le C_0\,m^{1/2n}$ follows from \eqref{a casa} by  a direct variational argument, we are not able, at this stage of the proof, to improve the diameter estimate from the order $m^{1/2n}$ to the natural order $m^{1/n}$. (In other words, our droplet, rescaled by a factor $m^{1/n}$ so to bring it to unit volume, could still look like a very elongated ellipsoid.) The inclusion \eqref{a casa}, with $y_m\in\pa A$ in place of $x_m\in A$, is however sufficient to use the boundary diffeomorphisms of $A$ to map back $E_m$ from the container into our reference half-space $H$. More precisely (see Notation \ref{notation boundary of A} below) there exist positive constants $s_0$ and $r_0$ such that for every $y\in\pa A$ we can find an open set $U_y\subset\R^n$ with $B_{s_0}\subset U_y$, and a $C^{1,1}$-diffeomorphism $\phi_y:U_y\to B_{y,2r_0}$ with $\phi_y(0)=y$ and $\nabla\phi_y(0)$ an orientation preserving isometry, such that
\[
\phi_y(U_y\cap H)=B_{y,2r_0}\cap A\,,\qquad \phi_y(U_y\cap\pa H)=B_{y,2r_0}\cap\pa A\,.
\]
Thanks to \eqref{a casa}, we can thus consider the deformed and rescaled shapes
\[
F_m=\frac{\phi_{y_m}^{-1}(E_m)}{\l_m}\,,\qquad \l_m=|\phi_{y_m}^{-1}(E_m)|^{1/n}=m^{1/n}\Big(1+O(m^{1/2n})\Big)\,.
\]
Clearly $F_m\subset H$ and $|F_m|=1$, and moreover by exploiting the minimality of $E_m$ and the fact that $\s(y_m)\to\s_0$ as $m\to 0^+$, one can see that
\begin{equation}
  \label{energia intro}
  \F_{H,\s_0}(F_m)-\psi(\s_0)\le C\,m^{1/n}\,,
\end{equation}
so that $F_m$ is asymptotically optimal in \eqref{variational problem limitingx} with $\tau=\s_0$. We can thus exploit a qualitative stability theorem (see Proposition \ref{lemma psi tau}-(ii)) to show the existence of $z_m\in\pa H$ such that
\begin{equation}
  \label{L1 conv intro}
  \lim_{m\to 0}|(F_m-z_m)\Delta K(\s_0)|=0\,.
\end{equation}
The arguments described so far are contained in Lemma \ref{thm convergenza a K} below. The next step in our analysis is Lemma \ref{lemma almost minimizers}, where we show that the existence of positive constants $\Lambda$ and $\rho_0$, and of elliptic functionals $\PSI_m$ such that each $F_m$ is a $(\Lambda,\rho_0)$-minimizer of the corresponding $\PSI_m$, i.e.
\begin{equation}
  \label{almost minimizer intro}
  \PSI_m(F_m;W)\le\PSI_m(F;W)+\Lambda\,|F\Delta F_m|\,,
\end{equation}
whenever $F\subset H$, $F_m\Delta F\cc W$ for some open set $W$ with $\diam(W)<2\rho_0$; see Definition \ref{def elliptic int} and Definition \ref{def lambda min} for the terminology and notation used here. Since we can show that each $\Psi_m$ is $\l$-elliptic on a $3\rho_0$-neighborhood of $F_m$ (for some $\l\ge 1$ independent of $m$), the minimality inequality \eqref{almost minimizer intro} implies uniform volume density estimates at boundary points of each $F_m$. In turn, this information allows one to improve \eqref{L1 conv intro} into
\begin{equation}
  \label{hd conv intro}
  \lim_{m\to 0}\hd\big(F_m-z_m,K(\s_0)\big)=0\,,
\end{equation}
so that $\diam(F_m)\le C$, and thus the natural diameter estimate $\diam(E_m)\le C\,m^{1/n}$.

The next step in our analysis is to notice that if we set
\[
\Phi(\nu)=\sup\big\{x\cdot\nu:\nu\in S(\s_0)\big\}\,,\qquad\nu\in S^{n-1}\,,
\]
then, in the terminology of \cite{fonsecamuellerwulff}, $S(\s_0)$ is the Wulff shape associated to $\Phi$  and
\begin{eqnarray*}
&&\F_{H,\s_0}(F)\ge\PHI(F)=\int_{\pa^*F}\Phi(\nu_F)\,d\H^{n-1}\,,\qquad\forall F\subset H\,,
\\
&&\F_{H,\s_0}(K(\s_0))=\PHI(K(\s_0))=\psi(\s_0)\,,
\end{eqnarray*}
where $K(\s_0)$ is just a translation of the unit volume rescaling of $S(\s_0)$. In particular, by \eqref{energia intro} we have $\PHI(F_m)-\PHI(K(\s_0))\le C\,m^{1/2n}$, and then by the quantitative Wulff inequality from \cite{FigalliMaggiPratelliINVENTIONES} we infer the existence of $w_m\in\R^n$ such that
\begin{equation}
  \label{L1 conv intro quant wm}
|(F_m-w_m)\Delta K(\s_0)|\le C\,m^{1/2n}\,.
\end{equation}
We now exploit a simple geometric argument from \cite{figallindrei} together with the inclusion
\[
\frac{K(\s_0)}2\subset F_m-z_m
\]
(which follows immediately from \eqref{hd conv intro}) to conclude that one may as well take $w_m\cdot e_n=0$, and thus set $w_m=z_m\in\pa H$ in \eqref{L1 conv intro quant wm}. By combining this fact with the uniform volume density estimates for $F_m$, we are able to quantify \eqref{hd conv intro} and obtain
\begin{equation}
  \label{hd conv intro quant}
  \hd\big(F_m-z_m,K(\s_0)\big)\le C\,m^{1/2n^2}\,.
\end{equation}
In order to complete the proof of Theorem \ref{thm main} we are thus left to construct the diffeomorphisms between $M_0$ and $M_m$, and to rewrite \eqref{hd conv intro quant} in terms of $(E_m-y_m)/m^{1/n}$. We comment here only on the former task, which is achieved by combining the boundary regularity theorem from \cite{dephilippismaggiARMA} with a tool for constructing almost-normal diffeomorphsims between manifolds with boundary which was recently presented in \cite{CiLeMaIC1}. The corresponding diffeomorphisms enjoy a quite rigid structure, which should allow one to quantify more explicitly the rate of convergence in \eqref{diffeo convergence}. We leave this task for future investigations.

We now describe the organization of our paper. Section \ref{section preliminaries} is focused on the sessile droplet problem with no gravity, see \eqref{variational problem limitingx}. We first discuss some stability properties, see Proposition \ref{lemma psi tau}, and then we present an improved convergence theorem for sequence of uniform almost-minimizers of $\F_{H,\tau}$ converging in volume to $K(\tau)$, Theorem \ref{thm improved convergence to K}. In the latter result, convergence in volume is improved to $C^1$-convergence, in the sense that we extract from the almost-minimality condition the existence of $C^{1,\a}$-diffeomorphisms between the interior interfaces $M_0$ and $M_m$ converging in $C^1$ to the identity map. In section \ref{section convergence to K} we begin the proof of Theorem \ref{thm main}. In particular, in Lemma \ref{thm convergenza a K}, we obtain \eqref{a casa} (with $y_m\in\pa A$ in place of $x_m$) and prove \eqref{L1 conv intro} along the lines described above. The proof of Theorem \ref{thm main} is then concluded in section \ref{section convergence to K c1alpha}, first by proving the uniform almost-minimality of the sets $F_m$ (see Lemma \ref{lemma almost minimizers}), and then by wrapping-up the various information collected up to that point into a final discussion. We conclude this introduction by gathering the basic notation used in the paper.

\medskip

\noindent {\bf Sets in $\R^n$}: Given $x\in\R^n$ and $r>0$, the ball of center $x$ and radius $r$ is denoted by $B_{x,r}=\{y\in\R^n:|x-y|<r\}$, and we set $B_r=B_{0,r}$, $B=B_1=B_{0,1}$. Given $X,Y\subset\R^n$, the Hausdorff distance between $X$ and $Y$ is defined as
\begin{equation}
  \label{hd def}
  \hd(X,Y)=\max\big\{\sup_{x\in X}\dist(x,Y),\sup_{x\in Y}\dist(x,X)\big\}\,,
\end{equation}
while $I_\rho(X)=\{x\in\R^n:\dist(x,X)<\rho\}$ denotes the $\rho$-neighborhood of $X$, $\rho>0$.

\medskip

\noindent {\bf Manifolds in $\R^n$}: If $M\subset\R^n$ is a $k$-dimensional manifold with boundary, $1\le k\le n-1$, then we denote by $\INT(M)$ and $\bd(M)$ its interior and boundary points respectively, by $\nu_M^{co}$ the outer unit normal to $\bd(M)$ in $M$, and we set
\[
[M]_\rho=M\setminus I_\rho(\bd(M))\,,\qquad\rho>0\,.
\]

\noindent {\bf Sets of finite perimeter}: Given a Borel set $E\subset\R^n$ of locally finite perimeter in $\R^n$, we denote by $\pa^*E$ and $\nu_E$ the reduced boundary and the measure-theoretic outer unit normal of $E$. We have
\[
\ov{\pa^*E}=\Big\{x\in\R^n:0<|B_{x,r}\cap E|<|B_{x,r}|\ \forall r>0\Big\}\subset\pa E\,,
\]
and, up to modifying $E$ by a set of volume zero, one can always achieve
\[
\ov{\pa^*E}=\pa E\,,
\]
see \cite[Proposition 12.19]{maggiBOOK}. We shall always assume that the sets of finite perimeter under consideration have been modified in this way.

\medskip

\noindent {\bf Acknowledgment}: This work was supported by the NSF Grants DMS-1265910 and DMS-1361122.

\section{Some properties of droplets in half-spaces}\label{section preliminaries} In this section we discuss some basic properties of the variational problem
\begin{equation}
  \label{variational problem limiting}
  \psi(\tau)=\inf\big\{\F_{H,\tau}(F):F\subset H\,,|F|=1\big\}\,,
\end{equation}
where $H=\{x\in\R^n:x_n>0\}$ and $\tau\in(-1,1)$. Let us recall from the introduction that if we set
\begin{equation}
  \label{Stau}
  S(\tau)=\{x\in B:x_n>-\tau\}\,,
\end{equation}
then the unique minimizer $K(\tau)$ in \eqref{variational problem limiting} with horizontal barycenter at the origin is given by the formula
\begin{equation}
  \label{Ktau from Stau}
  K(\tau)=\tau\,e_n+\frac{S(\tau)}{|S(\tau)|^{1/n}}\,.
\end{equation}
In other words, $F$ is a minimizer in \eqref{variational problem limiting} if and only if $F=z+K(\tau)$ for some $z\in\pa H$. In section \ref{section sessile} we discuss some stability properties of \eqref{variational problem limiting}, while section \ref{section improved convergence} contains an improved convergence theorem towards the ideal droplet $K(\tau)$.

\subsection{Stability properties}\label{section sessile}  The following proposition collects the properties of problem \eqref{variational problem limiting} that we shall need in the study of \eqref{variational problem m}. Property (i) consists just in the monotonicity of $\psi$, while property (ii) is a qualitative stability statement. In property (iii) we exploit the main result of \cite{FigalliMaggiPratelliINVENTIONES} to quantify stability under a technical containment assumption. This containment assumption is not needed, and indeed it could be eliminated by mimicking the arguments in \cite{figallindrei}. However, this more general result is not needed here.

\begin{proposition}\label{lemma psi tau}
  (i) One has $\psi'(\tau)>0$ for every $\tau\in(-1,1)$.

  \medskip

  \noindent (ii) If $\{F_h\}_{h\in\N}$ is a sequence of subsets of $H$ with $|F_h|=1$ for every $h\in\N$ and
  \[
  \lim_{h\to\infty}\F_{H,\tau}(F_h)=\psi(\tau)\,,
  \]
  then there exists $\{z_h\}_{h\in\N}\subset\pa H$ such that, up to extracting subsequences, one has
  \[
  \lim_{h\to\infty}|(F_h+z_h)\Delta K(\tau)|=0\,.
  \]

  \medskip

  \noindent (iii) There exist positive constants $\e(n,\tau)$ and $c(n,\tau)$ with the following property: If $F\subset H$ with $|F|=1$ and
  \begin{equation}
    \label{hp technical}
  \frac{K(\tau)}2\subset F\,,\qquad \F_{H,\tau}(F)\le (1+\e(n,\tau))\,\psi(\tau)\,,
  \end{equation}
  then
  \begin{equation}
    \label{quantitative sessile problem}
      \F_{H,\tau}(F)-\psi(\tau)\ge c(n,\tau)\,\inf_{z\in\pa H}\,|(F-z)\Delta K(\tau)|^2\,.
  \end{equation}
\end{proposition}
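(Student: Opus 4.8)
These three statements are largely independent; part (iii) is the quantitative version of part (ii), and both proceed by comparing $\F_{H,\tau}$ with an anisotropic perimeter whose Wulff shape is $S(\tau)$. Throughout I use that $K(\tau)$ is \emph{the} minimizer in \eqref{variational problem limiting} with horizontal barycenter at the origin, so $\psi(\tau)=\F_{H,\tau}(K(\tau))$, together with the identity $\psi(\tau)=n\,|S(\tau)|^{1/n}$ (which follows from \eqref{Ktau from Stau} and the Wulff identity used in (iii) below, or from a direct computation of the free and wetted areas of $K(\tau)$). Granting this, (i) is a one-line computation: since $\frac{d}{d\tau}|S(\tau)|=\H^{n-1}\big(B\cap\{x_n=-\tau\}\big)=\om_{n-1}(1-\tau^2)^{(n-1)/2}$,
\[
\psi'(\tau)=|S(\tau)|^{(1-n)/n}\,\om_{n-1}\,(1-\tau^2)^{(n-1)/2}>0\,,\qquad\tau\in(-1,1)\,.
\]
If one prefers to avoid the closed form, one notes that $\psi$ is an infimum of affine functions of $\tau$, hence concave, and that comparing $\psi(\tau_2)-\psi(\tau_1)$ with the competitors $K(\tau_1)$ and $K(\tau_2)$ sandwiches the difference quotient between $\H^{n-1}(\pa H\cap\pa K(\tau_1))$ and $\H^{n-1}(\pa H\cap\pa K(\tau_2))$, which are continuous and strictly positive in $\tau$; hence $\psi\in C^1$ with $\psi'(\tau)=\H^{n-1}(\pa H\cap\pa K(\tau))>0$.

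For (ii) the plan is compactness plus rigidity. First, for any $F\subset H$ the elementary projection bound $\H^{n-1}(\pa H\cap\pa^*F)\le\H^{n-1}(H\cap\pa^*F)$ (apply the divergence theorem to $e_n$ on $F$) yields $\F_{H,\tau}(F)\ge\tfrac{1-|\tau|}{2}\,P(F;\R^n)$, so the $F_h$ have uniformly bounded perimeter. The decisive point is that $\psi(\tau)=n|S(\tau)|^{1/n}<n\om_n^{1/n}$: since $P(F_h)\ge n\om_n^{1/n}$ by the Euclidean isoperimetric inequality and $\F_{H,\tau}(F_h)=\H^{n-1}(H\cap\pa^*F_h)+\tau\,\H^{n-1}(\pa H\cap\pa^*F_h)$, this forces $\H^{n-1}(\pa H\cap\pa^*F_h)\ge c(n,\tau)>0$ for large $h$, i.e.\ every near-minimizer wets the wall a definite amount. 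I would then run a standard truncation / concentration-compactness argument — dichotomy excluded by the strict subadditivity of $v\mapsto\psi(\tau)v^{(n-1)/n}$, vanishing by the isoperimetric inequality, vertical escape by the wetting lower bound combined with a relative isoperimetric inequality in a slab adjacent to $\pa H$ — to produce, after horizontal translations $z_h\in\pa H$ and along a subsequence, $F_h+z_h\to F_\infty$ in $L^1$ with $F_\infty\subset\overline H$ and $|F_\infty|=1$. Lower semicontinuity of the capillary functional (see \cite[Chapter 19]{maggiBOOK}) gives $\F_{H,\tau}(F_\infty)\le\psi(\tau)$, so $F_\infty$ is a minimizer, hence a translate of $K(\tau)$; being contained in $\overline H$ it is a horizontal translate, which is absorbed into $z_h$.

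For (iii) I would follow the route sketched in the Introduction. Let $\Phi_\tau(\nu)=\sup\{x\cdot\nu:x\in S(\tau)\}$ and $\PHI_\tau(F)=\int_{\pa^*F}\Phi_\tau(\nu_F)\,d\H^{n-1}$. Since $\Phi_\tau\le1$ on $S^{n-1}$ and $\Phi_\tau(-e_n)=\tau$, one has $\F_{H,\tau}(F)\ge\PHI_\tau(F)$ for all $F\subset H$, with equality at $K(\tau)$. Moreover $\PHI_\tau$ is translation invariant ($\int_{\pa^*F}c\cdot\nu_F=0$ for constant $c$), hence coincides with the anisotropic perimeter of the support function of any translate of $S(\tau)$; translating $S(\tau)$ to contain the origin in its interior makes this a genuine surface tension with Wulff shape a translate of $S(\tau)$, so $\PHI_\tau(K(\tau))=\psi(\tau)=n|S(\tau)|^{1/n}$ and the quantitative Wulff inequality of \cite{FigalliMaggiPratelliINVENTIONES} applies. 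It gives $c(n,\tau)>0$ with
\[
\F_{H,\tau}(F)-\psi(\tau)\ \ge\ \PHI_\tau(F)-\psi(\tau)\ \ge\ c(n,\tau)\,\inf_{x\in\R^n}|(F-x)\Delta K(\tau)|^2
\]
for all $F\subset H$ with $|F|=1$; in particular, with $\e=\e(n,\tau)$ small, $\F_{H,\tau}(F)\le(1+\e)\psi(\tau)$ makes this infimum — say $a^2$, with near-optimal translation $x_*$ — as small as needed. It then remains to pass from $\inf_{x\in\R^n}$ to $\inf_{z\in\pa H}$, and this is where $\tfrac12K(\tau)\subset F$ is used, as in \cite{figallindrei}: if $x_*\cdot e_n<0$ then $x_*+K(\tau)$ protrudes below $\pa H$, so $|x_*\cdot e_n|\lesssim|(x_*+K(\tau))\setminus H|\le a$; if $x_*\cdot e_n>0$ then $\tfrac12K(\tau)\setminus(x_*+K(\tau))$ has measure $\gtrsim\min(x_*\cdot e_n,1)\le a$, so again $x_*\cdot e_n\lesssim a$ for $a$ small. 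Putting $z=x_*-(x_*\cdot e_n)e_n\in\pa H$ and using $|(K(\tau)+te_n)\Delta K(\tau)|\le C|t|$ gives $\inf_{z\in\pa H}|(F-z)\Delta K(\tau)|\le Ca$, and \eqref{quantitative sessile problem} follows with $c(n,\tau)$ replaced by $c(n,\tau)/C^2$.

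The hard part is the loss-of-compactness analysis in (ii): ruling out that a minimizing sequence splits into separated droplets or escapes to infinity — especially vertically, where $F_h\subset H$ by itself gives no control. The quantitative input that saves the day is $\psi(\tau)<n\om_n^{1/n}$, i.e.\ that wetting the wall is strictly advantageous for every $\tau\in(-1,1)$, which forces a definite wetted area and pins the droplet near $\pa H$. In (iii) the only subtlety beyond invoking \cite{FigalliMaggiPratelliINVENTIONES} is the elementary geometric reduction of the Wulff asymmetry from $\R^n$-translations to $\pa H$-translations under the containment hypothesis.
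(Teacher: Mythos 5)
Parts (i) and (iii) are sound and essentially match the paper. For (i), your identity $\psi(\tau)=n|S(\tau)|^{1/n}$ is correct and is equivalent to the relation $A(\tau)+\tau A_0(\tau)=n\,V(\tau)$ (the divergence theorem applied to $x$ on $S(\tau)$), which is exactly the engine of the paper's computation of $\psi'$; your route is a slightly cleaner packaging of the same calculation. For (iii), you follow the paper's argument: the calibration $\F_{H,\tau}(F)\ge\PHI(F)$ with equality at $K(\tau)$, the quantitative Wulff inequality of \cite{FigalliMaggiPratelliINVENTIONES}, and then the reduction of the optimal translation to a horizontal one, splitting into $t<0$ (the translated droplet protrudes below $\pa H$) and $t>0$ (where $K(\tau)/2\subset F$ is used), followed by the triangle inequality $|(z+K)\Delta F|\le|(w+K)\Delta F|+C|t|$. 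Your observation that for $\tau\le 0$ one should translate $S(\tau)$ so that its support function is positive (the energy being unchanged since $\int_{\pa^*F}c\cdot\nu_F=0$) is a point the paper leaves implicit, and it is handled correctly.

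The genuine gap is in (ii), which is where the paper spends almost all of its effort (steps one through four of its proof). You correctly identify the key scalar input ($\psi(\tau)<\psi(1)=n\om_n^{1/n}$, hence a definite wetted area along a minimizing sequence), but the whole compactness analysis is then delegated to ``a standard truncation / concentration-compactness argument'', and the two places where that argument is actually delicate are not addressed. First, horizontal truncation is not harmless when $\tau<0$: cutting $F$ by a plane $\{x_1=t\}$ discards wetted surface whose energy contribution is negative, so the truncated set may have \emph{larger} energy; the paper has to cut at levels where the slice $\H^{n-1}(F\cap\{x_1=t\})$ is below $\psi(\tau)\,\de_\tau(F)/2$ and to prove that such levels exist within bounded distance of $t_1,t_2$. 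Second, ``vertical escape by the wetting lower bound plus a relative isoperimetric inequality in a slab'' is a slogan rather than an argument: a wetted-area lower bound can be carried by a film of vanishing volume while the bulk of the mass drifts upward, so it does not by itself pin the droplet near $\pa H$. What excludes this is an energy comparison for the upper truncation $F\cap\{x_n>t\}$, which uses the gap $\psi(1)-\psi(\tau)>0$ together with the slicing estimate $P(F;\{0<x_n<t\})\ge P(F;\pa H)-\H^{n-1}(F\cap\{x_n=t\})$ and, when $\tau\le0$, the separate lower bound \eqref{lower bound s0} on $P(F;\pa H)$ — this is precisely the paper's step three and it is not reproduced or replaced in your sketch. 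So the strategy for (ii) is the right one, but as written it is a plan, not a proof: the dichotomy/vertical-escape exclusion must be carried out, with the $\tau<0$ case treated explicitly.
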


\begin{proof}[Proof of Proposition \ref{lemma psi tau}-(i)] Given $\tau\in(-1,1)$, let
\[
V(\tau)=|S(\tau)|\,,\qquad A(\tau)=P(S(\tau);\{x_n>-\tau\})\,,\qquad A_0(\tau)=P(S(\tau);\{x_n=-\tau\})\,.
\]
Since $\nu_B\cdot(-e_n)=\tau$ along $\{x_n=-\tau\}\cap\pa S(\tau)$, by \eqref{Ktau from Stau} we find that
\[
\psi(\tau)=\F_{H,\tau}(K(\tau))=\frac{A(\tau)+\tau\,A_0(\tau)}{V(\tau)^{(n-1)/n}}\,.
\]
We now notice that, if $\o_k$ denotes the volume of the unit sphere in $\R^k$, then
\[
V(\t)=\int_{-\t}^{1}\o_{n-1} (1-\rho^2)^{(n-1)/2}d\rho\,,\quad A(\t)=\int_{-\t}^{1}(n-1)\o_{n-1} (1-\rho^2)^{(n-3)/2}d\rho\,,
\]
while $A_0(\tau)=\o_{n-1}(1-\t^2)^{(n-1)/2}$. On noticing that
\[
V'=A_0\,,\qquad (A+\tau A_0)'=(n-1)\frac{A_0}{1-\tau^2}+A_0-(n-1)\frac{\tau^2\,A_0}{1-\tau^2}=n\,A_0\,,
\]
we find that
\[
\psi'=\frac{1}{V^{2-(2/n)}}\Big(V^{1-(1/n)}n\,A_0-\frac{n-1}n\,V^{-1/n}\,A_0\,(A+\tau A_0)\Big)=\frac{A_0}{n\,V^{2-(1/n)}}\,\vphi\,,
\]
where $\vphi=n^2\,V-(n-1)(A+\tau\,A_0)$. By the divergence theorem, $A(0)=n\,V(0)$, where $V(0)=\om_n/2$, so that $\vphi(0)=n\,\om_n/2$. Moreover, $\vphi'=n^2A_0-(n-1)nA_0=nA_0$, thus
\[
\vphi(\tau)=n\Big(\frac{\om_n}2+\int_0^\tau A_0\Big)=n\Big(\frac{\om_n}2+{\rm sign}(\tau)\,\Big|\Big\{x\in B:0<x_n<|\tau|\Big\}\Big|\Big)>0\,,
\]
for every $\tau\in(-1,1)$. This proves that $\psi'>0$ on $(-1,1)$.
\end{proof}

We now discuss statement (ii). As usual the issue is ensuring compactness in volume. We solve this problem by combining slicing with isoperimetry to prove that one can always reduce to consider sequences of sets $F_h$ with uniformly bounded diameters, which therefore are compact in volume modulo horizontal translations. The main modifications with respect to the case of the standard isoperimetric problem, corresponding formally to $\tau=1$, (see, for example, \cite[Lemma 5.1]{fuscomaggipratelli}, which in turn was inspired by \cite[Theorem 3.1]{fonsecamuellerwulff}) are found in the case $\tau<0$. Before entering into the proof, it is convenient to introduce some notation and terminology in analogy with \cite{fuscomaggipratelli}. Given $F\subset H$ and $\tau\in(-1,1)$ we define the {\it deficit} of $F$ (relatively to the variational problem \eqref{variational problem limiting}) as
\[
\de_\tau(F)=\frac{\F_{H,\tau}(F)}{\psi(\tau)\,|F|^{(n-1)/n}}-1\,.
\]
For every $\l>0$ we have $\de_\tau(\l\,F)=\de_\tau(F)\ge0$. Moreover, by \cite[Theorem 19.21]{maggiBOOK}, we have that $\de_\tau(F)=0$ if and only if $|F\Delta(z+r\,K(\tau))|=0$ for some $z\in\pa H$ and $r>0$. Correspondingly we define the {\it asymmetry index} of $F$ (again, relatively to problem \eqref{variational problem limiting}) as
\[
\a_\tau(F)=\inf\Big\{\frac{|F\Delta(z+r\,K(\tau))|}{|F|}:r^n=|F|\,,z\in\pa H\Big\}\,.
\]
With this terminology in force, statement (ii) is equivalent in saying that if $\de_\tau(F_h)\to 0$, then $\a_\tau(F_h)\to 0$. The key point in the proof will thus be obtaining \eqref{bounded} and \eqref{bounded vertical} below.

\begin{proof}[Proof of Proposition \ref{lemma psi tau}-(ii)] \noindent {\it Step one}: As a preparatory remark, we show that if $G, F\subset\R^n$ with
\begin{equation}
  \label{checkGF}
  |G|\le|F|\,,\qquad |F\Delta G|\le \e\,|F|,\qquad \mathcal{F}_{H,\t}(G)\le\mathcal{F}_{H,\t}(F)+\,(\e \,|F|)^{(n-1)/n}\,,
\end{equation}
for some $\e\in(0,1)$, then
\begin{equation}
  \label{continuity alpha and delta}
  |\a_\tau(F)-\a_\tau(G)|\le 3\,\e,\qquad \delta_\tau(G)\le\frac{\delta_\tau(F)+(1+\psi(\tau)^{-1})\,\e^{(n-1)/n}}{(1-\e)^{(n-1)/n}}\,\,.
\end{equation}
Since the volume of the symmetric difference defines a distance on subsets of $\R^n$, we easily find that $||F|\a_\tau(F)-|G|\a_\tau(G)|\le|F\Delta G|$. Hence, by $\a_\tau(G)<2$,
\[
|F||\a_\tau(F)-\a_\tau(G)|\le |F\Delta G|+||F|-|G||\,\a_\tau(G)\le(1+\a_\tau(G))\,|F\Delta G|\le 3\,|F\Delta G|\,,
\]
and the first estimate in \eqref{continuity alpha and delta} follows. Next we notice that
\begin{eqnarray*}
  |G|^{(n-1)/n}\de_\tau(G)-|F|^{(n-1)/n}\de_\tau(F)&\le&\frac{\F_\tau(G)-\F_\tau(F)}{\psi(\tau)}+|F|^{(n-1)/n}-|G|^{(n-1)/n}
  \\
  &\le&\frac{(\e|F|)^{(n-1)/n}}{\psi(\tau)}+||F|-|G||^{(n-1)/n}
  \\
  &\le&\Big(1+\frac1{\psi(\tau)}\Big)\,(\e|F|)^{(n-1)/n}\,,
\end{eqnarray*}
and we conclude the proof by $|G|\ge (1-\e)|F|$.

\bigskip

\noindent {\it Step two}: We claim that if $F\subset H$ with $\de_\tau(F)\le\de_0(n,\tau)<1$, then there exists $G\subset H$ with
\begin{equation}
  \label{bounded}
  |\a_\tau(G)-\a_\tau(F)|\le C(n,\tau)\,\de_\tau(F)\,,\quad \de_\tau(G)\le C(n,\tau)\,\de_\t(F)\,,\quad \sup_{x,y\in G}\frac{|x_1-y_1|}{|F|^{1/n}}\le C(n,\tau)\,,
\end{equation}
where $\de_0(n,\tau)$ and $C(n,\tau)$ are suitable positive constants. Without loss of generality, we may assume that $|F|=1$ and $F=F^{(1)}$, the set of points of density $1$ of $F$. Thus it suffices to prove
\begin{equation}
  \label{bounded x1}
  \sup_{x,y\in G}|x_1-y_1|\le C(n,\tau)\,,
\end{equation}
together with
\begin{equation}
  \label{bounded x}
 |F\Delta G| \le C(n,\tau)\,\de_\tau(F)^{n/(n-1)}\,,\qquad \F_{H,\tau}(G)\le \F_{H,\tau}(F)+C(n,\tau)\,\de_\tau(F)\,,
\end{equation}
which take the place of the first two inequalities in \eqref{bounded} by step one. Let us set
\begin{equation}
  \label{v and p}
  v(t)=|F\cap\{x_1 <t\}|\,,\qquad s(t)=\H^{n-1}(F\cap\{x_1 =t\})\,,
\end{equation}
so that $v(t)$ is absolutely continuous on $\R$ with $v'(t)=s(t)$ for a.e. $t\in\R$ thanks to Fubini's theorem. We notice that for every $t\in\R$ one has
\begin{equation}
  \label{divergence theorem application}
  \begin{split}
s(t)=\H^{n-1}(F\cap\{x_1 =t\})\le P(F;H\cap\{x_1 <t\})\,,
\\
P(F;\{x_1 <t\}\cap\pa H)\le P(F;H\cap\{x_1 <t\})\,.
\end{split}
\end{equation}
Indeed, by \cite[Equation (16.7)]{maggiBOOK} and $F=F^{(1)}$ one finds that
\begin{equation}
  \label{normale Ftmeno}
  \begin{split}
  \pa^*(F\cap \{x_1 <t\})=&\big(H\cap\{x_1 <t\}\cap\pa^*F\big)
  \\
  &\cup\big(\{x_1 <t\}\cap\pa H\cap\pa^*F\big)
  \\
  &\cup\big(F\cap\{x_1 =t\}\big)
  \\
  &\cup\big(\pa^*F\cap\{x_1=t\}\cap\{\nu_F=e_1\}\big)
\end{split}
\end{equation}
where the identity holds up to $\H^{n-1}$-negligible sets, and where the elements of the union are disjoint. Since the outer unit normal to $F\cap \{x_1 <t\}$ coincides with $\nu_F$ on the first set on the right-hand side, with $-e_n$ on the second one, and with $e_1$ on the third and the fourth one, the first inequality in \eqref{divergence theorem application} follows by applying the divergence theorem on $F\cap\{x_1 <t\}$ to the constant vector field $f(x)=e_1$,
\begin{eqnarray*}
0&=&\int_{H\cap\{x_1 <t\}\cap\pa^*F}e_1\cdot\nu_F+\H^{n-1}(\{x_1 =t\}\cap F)+\H^{n-1}(\{x_1 =t\}\cap \pa^*F\cup\{\nu_F=e_1\})
\\
&\ge&\int_{H\cap\{x_1 <t\}\cap\pa^*F}e_1\cdot\nu_F+\H^{n-1}(\{x_1 =t\}\cap F)\,,
\end{eqnarray*}
while the second inequality follows by using the vector field $f(x)=e_n$,
\[
0=\int_{H\cap\{x_1 <t\}\cap\pa^*F}e_n\cdot\nu_F-P(F;\{x_1 <t\}\cap\pa H)\,.
\]
The actual proof requires a truncation of the vector field $f(x)$. We omit the details and refer to \cite[Proposition 19.22]{maggiBOOK} for a complete exposition of an identical argument.

With \eqref{divergence theorem application} at hand, we now prove the existence of $G\subset F$ such that \eqref{bounded x} and \eqref{bounded x1} hold. For $t\in\R$ we set
\[
F^-_t=F\cap\{x_1<t\}\,,\qquad F^+_t=F\cap\{x_1>t\}\,.
\]
By \eqref{normale Ftmeno}, and by an analogous formula for $F_t^+$, one has
\[
\F_{H,\tau}(F_t^+)+\F_{H,\tau}(F_t^-)\le\F_{H,\tau}(F)+2\,s(t)\,,\qquad \forall t\in\R\,.
\]
(The inequality sign depends on the possibility that $\H^{n-1}(\pa^*F\cap\{x_1=t\})>0$.) By definition of $\psi(\tau)$ we have $\F_{H,\tau}(F_t^\pm)\ge\psi(\tau)|F_t^\pm|^{(n-1)/n}$, so that
\begin{eqnarray*}
2\,s(t)+\psi(\tau)\,\de_\tau(F)&=&2\,s(t)+\F_{H,\tau}(F)-\psi(\tau)\ge\F_{H,\tau}(F_t^+)+\F_{H,\tau}(F_t^-)-\psi(\tau)
\\
&\ge&\psi(\tau)\,\Psi(v(t))\,,\qquad\forall t\in\R\,,
\end{eqnarray*}
where  $\Psi(\g)=(1-\g)^{(n-1)/n}+\g^{(n-1)/n}-1$, $\g\in[0,1]$. We rearrange this inequality as
\begin{equation}
  \label{alps1}
  s(t)\ge \frac{\psi(\tau)}2\,\Big(\Psi(v(t))-\de_\tau(F)\Big)\,,\qquad\forall t\in\R\,.
\end{equation}
Let us   notice that we can find $\k(n)>0$ such that
\begin{equation}
  \label{alps2}
  \Psi(\g)\ge\k(n)\,\min\{\g,1-\g\}^{(n-1)/n}\,,\qquad\forall \g\in[0,1]\,.
\end{equation}
We now use the assumption that $\de_\t(F)\le\de_0(n,\tau)$ to ensure that, if we set
\begin{equation}
  \label{def t1}
  t_1=\inf\Big\{t\in\R:\frac{\k(n)\,v(t)^{(n-1)/n}}2\ge\de_\t(F)\Big\}\,,
\end{equation}
then $t_1\in\R$. Since $v$ is increasing, we have $\k(n)\,v(t)^{(n-1)/n}\ge 2\de_\t(F)$ for every $t>t_1$, and thus we can apply \eqref{alps1} and \eqref{alps2} to find that if $t>t_1$ with $v(t)\le 1/2$, then
\begin{equation}
  \label{roads 1}
  s(t)\ge \frac{\psi(\tau)}2\,\Big(\k(n)\,v(t)^{(n-1)/n}-\de_\tau(F)\Big)\ge\frac{\psi(\tau)\k(n)}4\,\,v(t)^{(n-1)/n}\,;
\end{equation}
similarly, if we define $t_2$ by
\begin{equation}
  \label{def t2}
t_2=\sup\Big\{t\in\R:\frac{\k(n)\,(1-v(t))^{(n-1)/n}}2\ge\de_\t(F)\Big\}\,,
\end{equation}
then $t_2\in\R$ with $\k(n)\,(1-v(t))^{(n-1)/n}\ge 2\de_\tau(F)$ for every $t<t_2$, and again by \eqref{alps1} and \eqref{alps2}
\begin{equation}
  \label{roads 2}
  s(t)\ge \frac{\psi(\tau)}2\,\Big(\k(n)\,(1-v(t))^{(n-1)/n}-\de_\tau(F)\Big)\ge\frac{\psi(\tau)\k(n)}4\,\,(1-v(t))^{(n-1)/n}\,,
\end{equation}
whenever $t<t_2$ with $v(t)\ge 1/2$. In conclusion,
\begin{equation}
  \label{for ae}
  s(t)\ge\frac{\psi(\tau)\k(n)}4\,\,\min\{v(t)\,,1-v(t)\}^{(n-1)/n}\,,\qquad\forall\ t\in(t_1,t_2)\,,
\end{equation}
so that, by taking $s=v'$ a.e. on $\R$ into account,
\[
\frac{\psi(\tau)\,\k(n)(t_2-t_1)}{4}\le \int_{t_1}^{t_2}\frac{v'}{\min\{v,1-v\}^{(n-1)/n}}\le\int_0^1\frac{d\g}{\min\{\g,1-\g\}^{(n-1)/n}}=C(n)\,,
\]
that is $t_2-t_1\le C(n,\tau)$. Hence, the set
\[
G'=F\cap\{t_1<x_1<t_2\}
\]
has directional diameter along the $x_1$-axis bounded by $C(n,\tau)$, with
\begin{equation}
  \label{F delta G}
  |F\setminus G'|=v(t_1)+(1-v(t_2))\le 2\,\Big(\frac{2\,\de_\tau(F)}{\k(n)}\Big)^{n/(n-1)}\,.
\end{equation}
We now split the argument depending on the sign of $\tau$.

When $\tau\ge0$ we conclude the proof of \eqref{bounded x} and \eqref{bounded x1} by setting $G=G'$. Indeed, with this choice, \eqref{bounded x1} is immediate from $t_2-t_1\le C(n,\tau)$, while the first bound in \eqref{bounded x} follows from \eqref{F delta G}. Moreover, by \eqref{divergence theorem application},
\begin{eqnarray}\label{rearrange them}
\F_{H,\tau}(F)-\F_{H,\tau}(G)
&\ge&P(F;H\cap\{x_1<t_1\})-s(t_1)+\tau\,P(F;\pa H\cap\{x_1<t_1\})
\\\nonumber
&&+P(F;H\cap\{x_1>t_2\})-s(t_2)+\tau\,P(F;\pa H\cap\{x_2>t_1\})\,,
\end{eqnarray}
where $P(F;H\cap\{x_1<t_1\})\ge s(t_1)$  by \eqref{divergence theorem application}, and similarly $P(F;H\cap\{x_1>t_2\})\ge s(t_2)$ (and where the first inequality sign depends on the possibility that $\H^{n-1}(\pa^*F\cap\{x_1=t_i\})>0$ for $i=1,2$.) Thus $\tau\ge0$ implies $\F_{H,\tau}(F)\ge\F_{H,\tau}(G)$, and the second bound in \eqref{bounded x}.

When $\tau<0$ we can fix the argument by cutting $F$ at nearby levels of $t$ such that $s(t)$ is sufficiently small in terms of deficit. To make this precise, let us consider the sets
\begin{eqnarray*}
I_1=\Big\{t\in\R:v(t)\le \frac12\,,\quad s(t)<\frac{\psi(\tau)\de_\tau(F)}2\Big\}\,,
\\
I_2=\Big\{t\in\R:v(t)\ge \frac12\,,\quad s(t)<\frac{\psi(\tau)\de_\tau(F)}2\Big\}\,,
\end{eqnarray*}
and define
\[
t_1^*=\left\{
\begin{array}{l l}
  \inf\{t\in\R:v(t)>0\}\,,&\mbox{if $I_1=\emptyset$}
  \\
  \sup\,I_1\,,&\mbox{if $I_1\ne\emptyset$}
\end{array}\right .
\qquad
t_2^*=\left\{
\begin{array}{l l}
  \sup\{t\in\R:v(t)>0\}\,,&\mbox{if $I_2=\emptyset$}
  \\
  \inf\,I_2\,,&\mbox{if $I_2\ne\emptyset$}
\end{array}
\right .
\,.
\]
We first note that
\begin{eqnarray}
\diam(\{0<v<1/2\})\le C(n,\tau)\,,&&\qquad\mbox{if $I_1=\emptyset$}\,,
\\
\diam(\{1/2< v<1\})\le C(n,\tau)\,,&&\qquad\mbox{if $I_2=\emptyset$}\,;
\end{eqnarray}
indeed, if for example $I_1=\emptyset$, then by $s=v'$ a.e. on $\R$ we find
\[
\frac12\ge\int_{\{0<v< 1/2\}}s(t)\,dt\ge \diam(\{0<v< 1/2\})\,\frac{\psi(\tau)\de_\tau(F)}2\,.
\]
Next we remark that if $I_1\ne\emptyset$, then $t_1^*\in\R$ and actually $t_1^*\le t_1$: indeed, by the same argument leading to \eqref{roads 1} one can deduce that
\[
s(t)\ge\frac{\psi(\tau)}2\,\de_\t(F)\,,\qquad\mbox{for every $t>t_1$ with} \ v(t)\le\frac12\,.
\]
Similarly, by arguing as in the proof of \eqref{roads 2}, we see that if $I_2\ne\emptyset$, then $t_2^*\in\R$ with $t_2^*\ge t_2$, as
\[
s(t)\ge\frac{\psi(\tau)}2\,\de_\t(F)\,,\qquad\mbox{for every $t<t_2$ with} \ v(t)\ge\frac12\,.
\]
Finally, we notice that if $I_1\ne\emptyset$ or $I_2\ne\emptyset$, then one has, respectively,
\[
t_1-t_1^*\le C(n,\tau)\,,\qquad t_2^*-t_2\le C(n,\tau)\,.
\]
To prove the first relation, notice that for every $t\in (t_1^*,t_1)$ one has $\psi(\tau)\de_\tau(F)\le 2\,s(t)$, and thus $s=v'$ a.e. on $\R$ and $v(t_1)=(2\de_\t(F)/\k(n))^{n/(n-1)}$ give
\[
t_1-t_1^*\le\frac2{\psi(\tau)\de_\tau(F)}\int_{t_1^*}^{t_1} v'\le
\frac{2\,v(t_1)}{\psi(\tau)\de_\tau(F)}\le C(n,\tau)\,\de_\tau(F)^{1/(n-1)}\,;
\]
similarly, for every $t\in(t_2,t_2^*)$ one has $\psi(\tau)\de_\tau(F)\le 2\,s(t)$, and thus $s=v'$ a.e. on $\R$ and $v(t_2)\ge 1-(2\de_\t(F)/\k(n))^{n/(n-1)}$ give
\[
t_2^*-t_2\le
\frac{2\,(v(t_2^*)-v(t_2))}{\psi(\tau)\de_\tau(F)}
=
\frac{2\,[(1-v(t_2))-(1-v(t_2^*))]}{\psi(\tau)\de_\tau(F)}
\le C(n,\tau)\,\de_\tau(F)^{1/(n-1)}\,;
\]
With these remarks at hand we finally set
\[
t_1^{**}
\]
to be $\inf\{v>0\}-1$ if $I_1=\emptyset$ or to be any $t<t_1^*$ with
\[
s(t_1^{**})<\frac{\psi(\tau)\de_\tau(F)}2\,,\qquad t_1^*-t_1^{**}\le 1\,,
\]
in case $I_1\ne\emptyset$; we set
\[
t_2^{**}
\]
to be $\sup\{v>0\}+1$ if $I_2=\emptyset$ or to be any $t>t_2^*$ with
\[
s(t_2^{**})<\frac{\psi(\tau)\de_\tau(F)}2\,,\qquad t_2^{**}-t_2^{*}\le 1\,,
\]
in case $I_2\ne\emptyset$; and finally define $G$ by taking
\[
G=F\cap\{t_1^{**}<x_1<t_2^{**}\}\,.
\]
Notice that the above remarks show that it must be
\[
t_2^{**}-t_1^{**}\le C(n,\tau)\,,\qquad\max\Big\{s(t_1^{**}),s(t_2^{**})\Big\}\le \frac{\psi(\tau)\de_\tau(F)}2\,,
\]
so that \eqref{bounded x1} holds. Since $G'\subset G=F\cap\{t_1<x_1<t_2\}$ we have
\[
|F\setminus G|\le|F\setminus G'|\le C(n,\tau)\,\de_\tau(F)^{n/(n-1)}\,,
\]
and thus the first bound in \eqref{bounded x} holds. At the same time if we write down \eqref{rearrange them} with $t_i^{**}$ in place of $t_i$ and exploit the inequalities $|\tau|\,P(F;\pa H\cap\{x_1<t_i^{**}\})<P(F;H\cap\{x_1<t_i\})$ (recall the second inequality in \eqref{divergence theorem application}), then we find
\[
\F_{H,\tau}(F)-\F_{H,\tau}(G)\ge -s(t_1^{**})-s(t_2^{**})\ge -\psi(\tau)\,\de_\tau(F)\,,
\]
which is the second bound in \eqref{bounded x}. This completes step two.

\bigskip

\noindent {\it Step three}: We show that if $F\subset H$ with $\de_\tau(F)\le\de_0(n,\tau)$, then there exists $G\subset H$ with
\begin{equation}
  \label{bounded vertical}
  |\a_\tau(G)-\a_\tau(F)|\le C(n,\tau)\,\de_\tau(F)\,,\quad \de_\tau(G)\le C(n,\tau)\,\de_\t(F)\,,\quad \sup_{x\in G}\frac{x_n}{|F|^{1/n}}\le C(n,\tau)\,.
\end{equation}
Once again, we can prove this with $|F|=1$ and $F=F^{(1)}$. Similarly to step two we set
\[
v(t)=|F\cap\{x_n<t\}|\,,\quad s(t)=\H^{n-1}(F\cap\{x_n=t\})\,,\quad p(t)=\H^{n-2}(\pa^*F\cap\{x_n=t\})\,,
\]
and notice that, thanks to $F=F^{(1)}$ and \cite[Equation (16.7)]{maggiBOOK}, for every $t\in\R$ one has
\[
\F_{H,\tau}(F_t^+)+\F_{H,\tau}(F_t^-)\le\F_{H,\tau}(F)+2\,s(t)\,,
\]
where now $F_t^+=F\cap\{x_n>t\}$ and $F_t^-=F\cap\{x_n<t\}$, and where the inequality sign holds since it may happen that $\H^{n-1}(\pa^*F\cap\{x_n=t\})>0$ for some (but at most for countably many) values of $t$. By setting, similarly to what done in \eqref{def t1} and \eqref{def t2},
\[
t_1=\inf\Big\{t>0:\frac{\k(n)\,v(t)^{(n-1)/n}}2\ge\de_\t(F)\Big\}\,,\ \
t_2=\sup\Big\{t>0:\frac{\k(n)\,(1-v(t))^{(n-1)/n}}2\ge\de_\t(F)\Big\}\,,
\]
and by repeating the same arguments we find once again $t_2-t_1\le C(n,\tau)$. The step will be then be completed, once again thanks to step one, by setting $G=F\cap\{x_1<t_2\}$ and by showing that $t_1\le C(n,\tau)$. We shall actually prove that $t_1\le C(n,\tau)\,\de_\tau(F)^{n/(n-1)}$. To this end, we first notice that for a.e. $t\in(0,t_1)$ and by definition of $\psi(1)$,
\begin{eqnarray*}
-\tau\,P(F;\pa H)+\psi(\tau)\,\de_\t(F)&=&P(F;H)-\psi(\tau)
\ge P(F;\{x_n>t\})-\psi(\tau)
\\
&=&P(F\cap\{x_n>t\})-\psi(\tau)-s(t)
\\
&\ge&\psi(1)\,|F\cap\{x_n>t\}|^{(n-1)/n}-\psi(\tau)-s(t)
\\
&\ge&\psi(1)-\psi(\tau)-C(n,\tau)\,v(t_1)-s(t)\,.
\end{eqnarray*}
If we integrate this inequality on $(0,t_1)$ and take into account that $\int_0^{t_1}s(t)\,dt=v(t_1)\le C(n,\tau)\,\de_\tau(F)^{n/(n-1)}$, then we find, provided $\de_0(n,\tau)$ is small enough,
\begin{eqnarray*}
-\tau\,P(F;\pa H)\,t_1&\ge& (\psi(1)-\psi(\tau))\,t_1-C(n,\tau)\,\de_\t(F)\,t_1-C(n,\tau)\,\de_\tau(F)^{n/(n-1)}
\\
&\ge& \frac{\psi(1)-\psi(\tau)}2\,t_1-C(n,\tau)\,\de_\tau(F)^{n/(n-1)}\,.
\end{eqnarray*}
When $\tau\ge0$, then the left-hand side of this last estimate is negative, and thus we obtain $t_1\le C(n,\tau)\,\,\de_\tau(F)^{n/(n-1)}$, as claimed. Assuming from now on that $\tau\le0$, we rewrite the above estimate as
\begin{equation}
  \label{lower bound s0}
  P(F;\pa H)\ge c(n,\tau)-C(n,\tau)\,\frac{\de_\tau(F)^{n/(n-1)}}{t_1}\,.
\end{equation}
Now, by exploiting the divergence theorem we easily see that, up to possibly excluding countably many values of $t$, $s(t)\to P(F;\pa H)$ as $t\to 0^+$. By combining this fact with \cite[Equation 19.57]{maggiBOOK} we see that
\[
P(F;\pa H)\le P(F\cap\{x_n<t\};H)=P(F;\{0<x_n<t\})+s(t)\,,\qquad\mbox{for a.e. $t>0$}\,,
\]
and thus $\tau\,P(F;\pa H)\ge \tau\,P(F:\{0<x_n<t\})+\tau\,s(t)$. In particular, for a.e. $t\in(0,t_1)$,
\begin{eqnarray}\nonumber
\psi(\tau)\,\de_\tau(F)
&=&P(F;\{x_n>t\})+P(F;\{t>x_n>0\})+\tau\,s(0)-\psi(\tau)
\\\nonumber
&\ge&P(F;\{x_n>t\})+(1+\tau)\,P(F;\{t>x_n>0\})+\tau\,s(t)-\psi(\tau)
\\\nonumber
&=&\F_{H,\tau}(F\cap\{x_n>t\}-t\,e_n)-\psi(\tau)+(1+\tau)\,P(F;\{t>x_n>0\})
\\\nonumber
&\ge&\psi(\tau)\,|F\cap\{x_n>t\}|^{(n-1)/n}-\psi(\tau)
+(1+\tau)\,P(F;\{t>x_n>0\})
\\\nonumber
&\ge&\psi(\tau)\,(1-v(t_1))^{(n-1)/n}-\psi(\tau)
+(1+\tau)\,P(F;\{t>x_n>0\})
\\\label{lb fine}
&\ge&-C(n,\tau)\,\de_\tau(F)^{n/(n-1)}+(1+\tau)\,P(F;\{t>x_n>0\})\,.
\end{eqnarray}
Now, by a minor modification of \cite[Theorem 18.11]{maggiBOOK}, we find that
\begin{eqnarray*}
P(F;\{t>x_n>0\})\ge\int_0^t\sqrt{s'(t)^2+p(t)^2}\,dt\ge\int_0^t|s'(t)|\,dt\ge P(F;\pa H)-s(t)\,,
\end{eqnarray*}
(where we have exploited again $s(t)\to P(F;\pa H)$ as $t\to 0^+$) so that, integrating over $(0,t_1)$ and taking \eqref{lower bound s0} into account
\[
\int_0^{t_1}P(F;\{t>x_n>0\})\,dt\ge t_1\,P(F;\pa H)-v(t_1)\ge c(n,\tau)\,t_1-C(n,\tau)\,\de_\tau(F)^{n/(n-1)}\,.
\]
By first integrating \eqref{lb fine} over $(0,t_1)$ and by then plugging this last inequality, we find that
\[
C(n,\tau)\,t_1\,\de_\t(F)\ge (1+\tau) \Big(c(n,\tau)\,t_1-C(n,\tau)\,\de_\tau(F)^{n/(n-1)}\Big)\,,
\]
which, for $\de_0(n,\tau)$ small enough, implies $t_1\le C(n,\tau)\,\de_\tau(F)^{n/(n-1)}$.

\bigskip

\noindent {\it Step four}: We finally prove the statement. Arguing by contradiction, we assume the existence of $\eta>0$ and of a sequence of sets $F_h\subset H$ such that $\delta_\t(F_h)\to 0$ but $\a_\t(F_h)\ge\eta$. Without loss of generality, we can assume that $|F_h|=1$. By step two and step three we can find sets $G_h\subset H$ such that $\diam(G_h)\le C(n,\tau)$, $\sup_{x\in G_h}|x_n|\le C(n,\tau)$, $|F_h\Delta G_h|\to 0$, $\F_{H,\tau}(G_h)\to\psi(\tau)$ and $\a_\tau(G_h)\ge\eta/2$. By \cite[Equation (19.58)]{maggiBOOK} we have $\F_{H,\tau}(G_h)\ge (1+\tau)\,P(G_h)/2$, so that $\F_{H,\tau}(G_h)\to\psi(\tau)$ implies $P(G_h)\le C(n,\tau)$. This bound, together with $\diam(G_h)\le C(n,\tau)$ and $\sup_{x\in G_h}|x_n|\le C(n,\tau)$, implies that up to extracting subsequences and up to horizontal translations, $|G_h\Delta G|\to 0$ for some $G\subset H$. Clearly $\a_\tau(G)\ge\eta/2$, however, by lower semicontinuity of $\F_{h,\tau}$ (see \cite[Proposition 19.27]{maggiBOOK}) it must be
\[
\F_{H,\tau}(G)\le\liminf_{h\to\infty}\F_{H,\tau}(G_h)=\psi(\tau)\,,
\]
where $\F_{H,\tau}(G)\ge\psi(\tau)$ as $|F_h\Delta G_h|\to 0$ implies $|G|=1$. In conclusion, $\F_{H,\tau}(G)=\psi(\tau)$ with $|G|=1$, and thus $G=z+K(\tau)$ for some $z\in\pa H$. But then $\a_\t(G)=0$, a contradiction.
\end{proof}

We conclude this section by showing the validity of property (iii) in Proposition \ref{lemma psi tau}. To this end, it is convenient to define $\Phi:S^{n-1}\to(0,\infty)$ by setting
\[
\Phi(\nu)=\sup\Big\{x\cdot\nu:x\in S(\tau)\Big\}=\sup\Big\{x\cdot\nu:|x|<1\,,x_n>-\tau\Big\}\,,\qquad\nu\in S^{n-1}\,,
\]
and then consider a corresponding anisotropic perimeter functional $\PHI$ defined by setting
\[
\PHI(F)=\int_{\pa^*F}\Phi(\nu_F(x))\,d\H^{n-1}_x\in[0,\infty]\,,
\]
whenever $F$ is of locally finite perimeter in $\R^n$. It is immediate to check that
\begin{equation}
  \label{calibrazione phi}
  \Phi(-e_n)=-\tau\,,\qquad \mbox{$\Phi(\nu)=1$ if $\nu\cdot e_n>-\tau$}\,,
\end{equation}
so that
\begin{equation}
  \label{calibrazione phi K}
  \PHI(K(\tau))=P(K(\tau);H)+\tau\,P(K(\tau);\pa H)=\psi(\tau)\,,
\end{equation}
while
\begin{equation}
  \label{calibrazione phi F}
  \PHI(F)=\int_{H\cap\pa^*F}\Phi(\nu_F)\,d\H^{n-1}+\tau\,P(F;\pa H)\le\F_{H,\tau}(F)\,,\qquad\forall F\subset H\,,
\end{equation}
where of course we have used that $F\subset H$ implies $\nu_F=-e_n$ for $\H^{n-1}$-a.e. $x\in\pa^*F\cap\pa H$ (see, e.g., \cite[Exercise 16.6]{maggiBOOK}) as well as that $\Phi\le 1$ on $S^{n-1}$.

\begin{proof}
  [Proof of Proposition \ref{lemma psi tau}-(iii)] By \eqref{calibrazione phi K}, \eqref{calibrazione phi F} and the main result in \cite{FigalliMaggiPratelliINVENTIONES} we have that if $F\subset H$ with $|F|=1$, then for some $w\in\R^n$
  \begin{equation}
    \label{fimp}
      \F_{H,\tau}(F)-\psi(\tau) \ge\PHI(F)-\PHI(K(\tau))\ge c_0(n)\,\PHI(K(\tau))\,|F\Delta (w+K(\tau))|^2\,,
  \end{equation}
  for some positive constant $c_0(n)$. Of course $w=z+t\,e_n$ where $z\in\pa H$ and $t\in\R$. If $t<0$, then by $F\subset H$ we obtain
  \begin{eqnarray}\nonumber
  |F\Delta (w+K(\tau))|&\ge&|(w+K(\tau))\setminus F|\ge |(w+K(\tau))\setminus H|=|(t\,e_n+K(\tau))\setminus H|
  \\\label{fimp2}
  &=&|K(\tau)\cap\{0<x_n<-t\}|\,.
  \end{eqnarray}
  By combining \eqref{fimp}, \eqref{fimp2}, and \eqref{hp technical} we find that
  \[
  \e(n,\tau)\ge c_0(n)\,|K(\tau)\cap\{0<x_n<-t\}|^2\,,
  \]
  so that if $\e(n,\tau)/c_0(n)\le (|K(\tau)|/2)^2$ then $|t|\le t_0(n,\tau)<1$ and thus $|K(\tau)\cap\{0<x_n<-t\}|\ge c(n,\tau)\,|t|$; by combining this last inequality with \eqref{fimp2} and \eqref{fimp} we thus conclude that if $t<0$, then
  \begin{equation}
    \label{calibrazione t}
    \F_{H,\tau}(F)-\psi(\tau)\ge  c(n,\tau)\,|t|^2\,.
  \end{equation}
  We now prove that \eqref{calibrazione t} holds even when $t>0$. In this case we use $w+K(\tau)\subset\{x_n>t\}$ and the inclusion $K(\tau)/2\subset F$ in \eqref{hp technical} to deduce that
  \[
  |F\Delta (w+K(\tau))|\ge|F\setminus(w+K(\tau))|\ge |F\setminus\{x_n>t\}|\ge \Big|\frac{K(\tau)}2\cap\{x_n\le t\}\Big|\,.
  \]
  By \eqref{hp technical}, \eqref{fimp} and the last inequality, if $\e(n,\tau)$ is small enough then we are able to infer that $t\le t_0(n,\tau)<1/2$ and then to exploit an elementary lower bound of the form
  \[
  |(K(\tau)/2)\cap\{x_n\le t\}|\ge c(n,\tau)\,t\,,
  \]
  for $t\in(0,t_0(n,\tau))\cc(0,1/2)$, to conclude that \eqref{calibrazione t} holds. This said, by combining the bound
  \begin{eqnarray*}
  |(z+K(\tau))\Delta F|&\le&|(w+K(\tau))\Delta F|+|(z+K(\tau))\Delta (w+K(\tau))|
  \\
  &\le&|(w+K(\tau))\Delta F|+C(n,\tau)\,|t|
  \end{eqnarray*}
  with \eqref{fimp} and \eqref{calibrazione t} we conclude the proof of \eqref{quantitative sessile problem}.
\end{proof}

\subsection{An improved convergence theorem}\label{section improved convergence} Thorough this section, we set $H=\{x_n>0\}$ and $K=K(\tau)$ for some fixed $\tau\in(-1,1)$, see \eqref{def K}. Our main result, Theorem \ref{thm improved convergence to K} below, consists in showing that if $F_h$ is sequence of almost-minimizing sets in $H$ which converges to $K$ in volume, then $M_h=\ov{H\cap\pa F_h}$ is a $C^{1,\a}$-hypersurface with boundary for $h$ large enough, and there exist $C^{1,\a}$-diffeomorphisms $f_h$ between $M_h$ and $M_0=\ov{H\cap\pa K}$ such that $f_h\to\Id$ in $C^1$ and enjoys certain precise structure properties. In order to formulate this result in rigorous terms we need to set some definitions.

\begin{definition}[Elliptic integrands]\label{def elliptic int}
  {\rm Given an open set $\Om$ one says that $\Phi$ is an {\it elliptic integrand on $\Om$} if $\Phi:\ov{\Om}\times\R^n\to[0,\infty]$ is lower semicontinuous with $\Phi(x,\cdot)$ convex and one-homogeneous on $\R^n$ for every $x\in \ov{\Om}$. If $F$ is of locally finite perimeter in $\Om$ and $W\subset\Om$ is a Borel set, then we define
  \begin{eqnarray}\label{PHI def}
  \PHI(F;W)=\int_{W\cap\pa^*F}\Phi(x,\nu_F(x))\,d\H^{n-1}(x)\in[0,\infty]\,.
  \end{eqnarray}
  Given $\l\ge1$ and  $\ell\ge0$ we let $\X(\Om,\l,\ell)$ denote the family of those elliptic integrands $\Phi$ in $\Om$ such that $\Phi(x,\cdot)\in  C^{2,1}(S^{n-1})$ for every $x\in\ov{\Om}$ and such that for every $x,x'\in\ov{\Om}$, $\nu,\nu'\in S^{n-1}$, one has
\begin{gather*}
  \frac1\l\le \Phi(x,\nu)\le\l\,,
  \\
  |\Phi(x,\nu)-\Phi(x',\nu)|+|\nabla\Phi(x,\nu)-\nabla\Phi(x',\nu)|\le\ell|x-x'|\,,
  \\
  |\nabla\Phi(x,\nu)|+\|\nabla^2\Phi(x,\nu)\|+\frac{\|\nabla^2\Phi(x,\nu)-\nabla^2\Phi(x,\nu')\|}{|\nu-\nu'|}\le\l\,,
  \\
  \big|\nabla^2\Phi(x,\nu)[\tau,\tau]\big|\ge\frac{|\tau|^2}\l\,,\qquad\forall \tau\in\nu^\perp\,,
\end{gather*}
where $\nabla\Phi$ and $\nabla^2\Phi$ are taken with respect to the $\nu$-variable, and with $\nu^\perp=\{y\in\R^n:y\cdot\nu=0\}$.
}
\end{definition}

The following minimality condition is tailored to the description of capillarity problems, in the sense that one considers subsets $F$ of an half-space which minimize an elliptic integrand with respect to local perturbations which are allowed to freely modify $\pa F\cap\pa H$. In other words, we impose a Dirichlet condition inside of $H$, and a Neumann/free-boundary condition on $\pa H$; see
\begin{figure}
  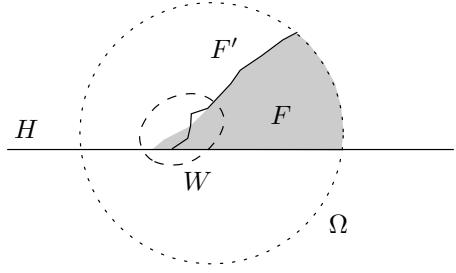\caption{\small{A competitor $F'$ of $F$ in \eqref{lambda minimiality Fk} can have a different trace along $\pa H$, but must agree with $F$ outside of an open set $W\cc\Om$ with small diameter. The boundary of $F'$ and $W$ are depicted, respectively, as a black line and as a dashed line.}}\label{fig almost}
\end{figure}
Figure \ref{fig almost}.

\begin{definition}[$(\Lambda,\rho_0)$-minimizers]\label{def lambda min}
  {\rm Let $\Om$ be an open set in $\R^n$, $H$ be an half-space in $\R^n$, and let $\Phi\in\X(\Om\cap H,\l,\ell)$. Given $\Lambda\ge0$ and $\rho_0>0$, a set $F\subset H$ of locally finite perimeter in $\Om$ is a $(\Lambda,\rho_0)$-minimizer of $\PHI$ in $(\Om,H)$ if
\begin{equation}
    \label{lambda minimiality Fk}
      \PHI(F;H\cap W)\le \PHI(F';H\cap W)+\Lambda\,|F\Delta F'|\,,
  \end{equation}
whenever $F'\subset H$ is such that $F\Delta F'\cc W$ with $W\cc \Om$ open and such that $\diam W<2\,\rho_0$.}
\end{definition}

We are now ready to state the following theorem, which is the main result of this section.

\begin{theorem}
  \label{thm improved convergence to K}
  Let $H=\{x_n>0\}$ and $K=K(\tau)$ for some fixed $\tau\in(-1,1)$. Given $\l\ge 1$, $\ell,\Lambda\ge0$, $\rho_0>0$, and $\a\in(0,1)$ there exists $C_\a$ depending on $n$, $\tau$, $\l$, $\ell$, $\Lambda$, $\rho_0$ and $\a$ with the following property.

  If $\Om$ is an open set such that $K\cc\Om$, $\{\Phi_h\}_{h\in\N}\subset\X(\Om,\l,\ell)$, and, for each $h\in\N$, $F_h$ is a $(\Lambda,\rho_0)$-minimizer of $\PHI_h$ in $(\Om,H)$ such that $|F_h\Delta K|\to 0$ as $h\to\infty$, then $M_h=\ov{H\cap\pa F_h}$ is a compact connected orientable $C^{1,\a}$-hypersurface with boundary for every $\a\in(0,1)$ and for $h$ large enough, and there exists a diffeomorphism $f_h:M_0\to M_h$, $M_0=\ov{H\cap\pa K}$ such that
  \[
  \|f_h\|_{C^{1,\a}(M_0)}\le C_\a\,,\qquad \lim_{h\to\infty}\|f_h-\Id\|_{C^1(M_0)}=0\,.
  \]
  Moreover, there exists $\e_h\to 0$ as $h\to\infty$ such that if we set
  \[
  u_h=(f_h-\Id)-[\nu_K\cdot(f_h-\Id)]\nu_K
  \]
  for the tangential displacement of $f_h$, then
  \[
  \spt\,u_h\subset M_0\cap\{x_n<\e_h\}\,,\qquad \lim_{h\to\infty}\|u_h\|_{C^1(M_0)}=0\,.
  \]
\end{theorem}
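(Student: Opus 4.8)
The plan is to obtain the $C^{1,\a}$-convergence of the interior interfaces from the almost-minimality hypothesis by combining uniform density estimates, classical De Giorgi–Allard type boundary regularity for almost-minimizers of elliptic integrands, and the diffeomorphism-construction machinery from \cite{CiLeMaIC1}. First I would record the uniform geometric consequences of $(\Lambda,\rho_0)$-minimality: for each fixed ball $B_{x,r}\cc\Om$ with $r<\rho_0$ one has the comparison $\PHI_h(F_h;H\cap B_{x,r})\le\PHI_h(F_h';H\cap B_{x,r})+\Lambda|F_h\Delta F_h'|$, which together with the uniform two-sided bounds $\l^{-1}\le\Phi_h\le\l$ in the definition of $\X(\Om,\l,\ell)$ yields uniform upper perimeter bounds and uniform volume density estimates $c\le |F_h\cap B_{x,r}|/|B_{x,r}|\le 1-c$ and $c\le \H^{n-1}(\pa^*F_h\cap B_{x,r})/r^{n-1}\le C$ at every $x\in\ov{H\cap\pa F_h}$, with constants independent of $h$. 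These density bounds are the standard mechanism for upgrading $|F_h\Delta K|\to 0$ to Hausdorff convergence of $\ov{\pa F_h}$ to $\ov{\pa K}$ and of $M_h$ to $M_0$; this is exactly the same step used implicitly in \eqref{hd conv intro} in the introduction.

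Next I would invoke the regularity theory for $(\Lambda,\rho_0)$-minimizers of elliptic integrands with the free/Neumann boundary condition on $\pa H$, as developed in \cite{dephilippismaggiARMA} (and the earlier references listed in the introduction). The point is that minimality against competitors that freely vary $\pa F_h\cap\pa H$ is precisely the capillarity-type minimality for which interior $C^{1,\a}$-regularity of $H\cap\pa F_h$ and boundary $C^{1,\a}$-regularity up to $\pa H$ (with $\bd(M_h)\subset\pa H$ and a Young-type contact condition) hold; the ellipticity and $C^{2,1}$-regularity in $\nu$ together with the Lipschitz dependence on $x$ encoded in $\X(\Om,\l,\ell)$ are exactly the hypotheses needed. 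Because $K(\tau)$ has a smooth interior interface $M_0=\ov{H\cap\pa K}$ meeting $\pa H$ transversally (at a fixed angle determined by $\tau$), Hausdorff proximity plus the uniform density estimates place each $M_h$ in a tubular neighborhood of $M_0$ on which $M_h$ is a $C^{1,\a}$-graph over $M_0$ in the normal direction, with a uniform $C^{1,\a}$-bound coming from the interior/boundary Schauder estimates applied on balls of fixed radius $\sim\rho_0$; since $|F_h\Delta K|\to 0$, the $C^1$-norm of the defining displacement tends to $0$, giving compactness and a diffeomorphism $f_h:M_0\to M_h$ with $\|f_h\|_{C^{1,\a}(M_0)}\le C_\a$ and $\|f_h-\Id\|_{C^1(M_0)}\to 0$.

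Finally, for the structural statement about the tangential displacement $u_h$, I would feed the normal graph just produced into the almost-normal diffeomorphism construction of \cite{CiLeMaIC1}: that tool takes two $C^{1,\a}$-close hypersurfaces-with-boundary and produces a diffeomorphism between them whose tangential component is supported in an arbitrarily small neighborhood of the boundary and is itself $C^1$-small when the surfaces are $C^1$-close. Applying it with $M_0$ and $M_h$, one gets $f_h$ equal to the normal graph map away from a shrinking neighborhood of $\bd(M_0)=M_0\cap\{x_n=0\}$, hence $\spt u_h\subset M_0\cap\{x_n<\e_h\}$ with $\e_h\to 0$, while $\|u_h\|_{C^1(M_0)}\to 0$ follows from the $C^1$-proximity of $M_h$ to $M_0$ together with the quantitative control in \cite{CiLeMaIC1}. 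The main obstacle I expect is verifying uniform boundary regularity near $\pa H$ with constants independent of $h$ — i.e. checking that the hypotheses of the boundary regularity theorem from \cite{dephilippismaggiARMA} are met uniformly along the sequence $\Phi_h\in\X(\Om,\l,\ell)$ and that the resulting excess-decay/Schauder estimates are uniform — and making sure the contact angle of $M_h$ with $\pa H$ stays bounded away from tangency so that $M_h$ genuinely lies in the normal tubular neighborhood of $M_0$ up to the boundary; everything else is a bookkeeping combination of the cited results.
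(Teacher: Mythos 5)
Your proposal follows essentially the same route as the paper: uniform density estimates from $(\Lambda,\rho_0)$-minimality upgrade $L^1$-convergence to Hausdorff convergence, the interior and boundary $\e$-regularity theory of \cite{dephilippismaggiARMA} (and \cite{DuzaarSteffen}) yields uniform $C^{1,\a}$ graphicality with $C^1$-small displacements, and the almost-normal diffeomorphism construction of \cite[Theorem 3.5]{CiLeMaIC1} then produces $f_h$ with tangential displacement supported near $\bd(M_0)$ --- exactly the content of Lemma \ref{lemma hp thm3.5} and its application in the paper. The only caveat is that, for $\tau\ne 0$, $M_h$ is a normal graph over $M_0$ only away from $\bd(M_0)$ (the paper's condition \eqref{basta iii j} on $[M_0]_\rho$), not in a full tubular neighborhood up to $\pa H$, so the separate boundary data (the diffeomorphisms $f_{0,h}$ between $\bd(M_h)$ and $\bd(M_0)$, with the $n=2$ case treated by hand) must be fed into \cite{CiLeMaIC1} --- which your third paragraph implicitly does.
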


\begin{remark}
  {\rm Theorem \ref{thm improved convergence to K} works as well if we just assume that $M_0$ is a $C^{2,1}$-hypersurface with boundary such that $\bd(M_0)\subset\pa H$. We have decided to focus on the case relevant to the capillarity problem, where $M_0$ is a spherical cap, just for the sake of definiteness. Considering a sequence of elliptic energies $\Phi_h$ is necessary in view of the application of this result to sequences of $m_h^{1/n}$-blow-ups of global minimizers in \eqref{variational problem m} as $m_h\to 0^+$.}
\end{remark}

The rest of the section is devoted to the proof of Theorem \ref{thm improved convergence to K}. The following lemma allows us to exploit \cite[Theorem 3.5]{CiLeMaIC1} for constructing the diffeomorphisms $f_h$. Let us recall that if $M\subset\R^n$ is a $k$-dimensional manifold with boundary, then we denote by $\INT(M)$ and $\bd(M)$ its interior and boundary points respectively, by $\nu_M^{co}$ the outer unit normal to $\bd(M)$ in $M$, and we set
$[M]_\rho=M\setminus I_\rho(\bd(M))$ for every $\rho>0$.

\begin{lemma}\label{lemma hp thm3.5}
  Under the assumptions of Theorem \ref{thm improved convergence to K}, there exist positive constants $\mu_0\in(0,1)$ and $L$ (depending on $n$, $\tau$, $\l$, $\ell$, $\Lambda$, and $\rho_0$) and $L_\a$ (also depending on $\a\in(0,1)$) such that, for $h$ large enough and for every $\a\in(0,1)$, $M_h$ is a compact connected orientable $C^{1,\a}$-hypersurface with boundary with
  \[
  \bd(M_h)\ne\emptyset\,,\qquad\lim_{h\to\infty}\hd(M_h,M_0)=0\,.
  \]
  Furthermore, if we set $\nu_{M_h}=\nu_{F_h}$, then
  \begin{equation}\label{gre1}
  \left\{\begin{split}
    &|\nu_{M_h}(x)-\nu_{M_h}(y)|\le L\,|x-y|\,,
    \\
    &|\nu_{M_h}(x)\cdot(y-x)|\le L\,|x-y|^2\,,
  \end{split}\right .\qquad\forall x,y\in M_h\,.
  \end{equation}
  In addition, the following holds:

  \smallskip

  \noindent {\it (i)} if $n=2$, then $\bd(M_0)=\{p_0,q_0\}$ and $\bd(M_h)=\{p_h,q_h\}$, with
  \[
  \lim_{h\to\infty}|p_h-p_0|+|q_h-q_0|+|\nu_{M_h}^{co}(p_h)-\nu_{M_0}^{co}(p_0)|+|\nu_{M_h}^{co}(q_h)-\nu_{M_0}^{co}(q_0)|=0\,;
  \]
   if $n\ge 3$, then there exist $C^{1,\a}$-diffeomorphisms $f_{0,h}$ between $M_h$ and $M_0$ such that
  \begin{eqnarray*}
    \|f_{0,h}\|_{C^{1,\a}(\bd(M_0))}\le L_\a\,,
    \\
    \lim_{h\to\infty}    \|f_{0,h}-\Id\|_{C^1(\bd(M_0))}=0\,,
    \\
    \lim_{h\to\infty}    \|\nu_{M_h}(f_{0,h})-\nu_{M_0}\|_{C^1(\bd(M_0))}=0\,,
    \\
    \lim_{h\to\infty}    \|\nu_{M_h}^{co}(f_{0,h})-\nu_{M_0}^{co}\|_{C^1(\bd(M_0))}=0\,;
  \end{eqnarray*}
  where, by definition, $\nu_{M_0}=\nu_K$.

  \smallskip

  \noindent (ii) for every $\rho<\mu_0^2$ there exist $h(\rho)\in\N$ and $\{\psi_h\}_{h\ge h(\rho)}\subset C^{1,\a}([M_0]_\rho)$ such that
  \begin{equation}\label{basta iii j}
    \begin{split}
      &\hspace{0.9cm}[M_h]_{3\rho}\subset(\Id+\psi_h\,\nu_{M_0})([M_0]_\rho)\subset M_h\,,\qquad\forall h\ge h(\rho)\,,
      \\
      &\sup_{h\ge h(\rho)}\|\psi_h\|_{C^{1,\a}([M_0]_\rho)}\le L_\a\,,\qquad \lim_{h\to\infty}\|\psi_h\|_{C^1([M_0]_\rho)}=0\,.
    \end{split}
  \end{equation}
\end{lemma}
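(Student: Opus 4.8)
The plan is to run the standard regularity pipeline for $(\Lambda,\rho_0)$-minimizers of uniformly elliptic integrands, anchoring every compactness statement at the smooth limit interface $M_0$. First I would record the uniform density estimates: since $\Phi_h\in\X(\Om,\l,\ell)$ forces $\l^{-1}P(\cdot;U)\le\PHI_h(\cdot;U)\le\l\,P(\cdot;U)$, comparing $F_h$ with balls in \eqref{lambda minimiality Fk} yields, uniformly in $h$ and on a fixed neighbourhood $\Om'\cc\Om$ of $K$, upper and lower volume and perimeter density estimates at every point of $\pa F_h$, both at interior points of $H$ and (using the free-boundary condition on $\pa H$) at points of $\pa H$; see \cite{maggiBOOK} and \cite{dephilippismaggiARMA}. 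Together with $|F_h\Delta K|\to0$ these promote $L^1$-convergence to Kuratowski convergence of $\pa F_h$ to $\pa K$ on $\Om'$, hence $\hd(M_h,M_0)\to0$; they also bound $\H^{n-1}(\pa F_h\cap\pa H)$ from below (as $\H^{n-1}(\pa K\cap\pa H)>0$), so that $M_h$ genuinely reaches $\pa H$ and $\bd(M_h)\ne\emptyset$, while excluding small spurious components of $M_h$ away from $K$. Orientability of $M_h$ is automatic ($\nu_{F_h}$ co-orients it), and its connectedness for $h$ large follows from $\hd(M_h,M_0)\to0$ and the connectedness of $M_0$.

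Next I would invoke $\e$-regularity. Because $M_0$ is smooth, $|F_h\Delta K|\to0$ makes the (anisotropic, spherical) excess of $F_h$ small at all scales $\le\rho_0$ near $\pa F_h$, and --- via a reflection and comparison with $K$ across $\pa H$ --- makes the corresponding boundary excess, Young's-law defect included, small as well. Feeding this into the interior $\e$-regularity theorem for $(\Lambda,\rho_0)$-minimizers of elliptic integrands and into the boundary $\e$-regularity theorem of \cite{dephilippismaggiARMA} gives, for $h$ large and every $\a\in(0,1)$, that $M_h$ is a $C^{1,\a}$-hypersurface with boundary (empty singular set) with $\bd(M_h)\subset\pa H$ and $h$-uniform $C^{1,\a}$-estimates, which fixes $L_\a$. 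Bootstrapping through the anisotropic mean-curvature Euler--Lagrange equation by Schauder theory --- using $\Phi_h(x,\cdot)\in C^{2,1}(S^{n-1})$ and the Lipschitz dependence on $x$ --- upgrades these to $h$-uniform interior $C^{2,\a}$- and boundary $C^{1,1}$-bounds; \eqref{gre1} is precisely the latter, the Lipschitz control of $\nu_{M_h}$ encoding the bound on the second fundamental form and the quadratic bound being its Taylor form.

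The remaining assertions are bookkeeping around these uniform bounds. The bound \eqref{gre1} places $M_h$ inside a tube of fixed width about $M_0$ on which the nearest-point projection is a $C^{1,\a}$-retraction, and places $\bd(M_h)$ inside a fixed tube about the smooth $(n-2)$-manifold $\bd(M_0)\subset\pa H$. For $n=2$ this gives $p_h,q_h\to p_0,q_0$ at once from $\hd(M_h,M_0)\to0$; for $n\ge3$ it produces, by graphicality over $\bd(M_0)$, the diffeomorphisms $f_{0,h}$. In both cases the $C^1$-convergences of $f_{0,h}$, $\nu_{M_h}$ and $\nu_{M_h}^{co}$ reduce to $C^1$-convergence $M_h\to M_0$, which holds because any subsequential $C^1$-limit of the $M_h$ bounds a $(\Lambda,\rho_0)$-minimizer converging in $L^1$ to $K$, hence equals $K$ by the uniqueness in \eqref{variational problem limiting}, forcing convergence of the whole sequence; the bound by $L_\a$ comes from the previous step. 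Likewise, for fixed $\rho<\mu_0^2$ the set $[M_0]_\rho$ is a compact smooth piece of hypersurface at distance $\ge c\,\rho$ from $\pa H$ carrying a uniform tube, inside which $[M_h]_{3\rho}$ sits for $h\ge h(\rho)$ by $\hd(M_h,M_0)\to0$; by \eqref{gre1} it is a $C^{1,\a}$-graph with small gradient over $M_0$ along $\nu_{M_0}$, yielding $\psi_h$ with $[M_h]_{3\rho}\subset(\Id+\psi_h\,\nu_{M_0})([M_0]_\rho)\subset M_h$, with $C^{1,\a}$-bound inherited from the $\e$-regularity step and $C^1$-norm tending to $0$.

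I expect the main obstacle to be the second step: extracting the $\e$-regularity and, above all, the \emph{$h$-uniform} $C^{1,\a}$/$C^{1,1}$ estimates \emph{up to the free boundary} $\pa H$ from \cite{dephilippismaggiARMA}, and in particular verifying that the boundary excess (with the Young's-law defect) vanishes along the sequence so that the boundary $\e$-regularity theorem applies with constants independent of $h$. Everything downstream --- the passage from $L^1$ to Hausdorff convergence, and the construction of the diffeomorphisms $f_{0,h}$ and the graphs $\psi_h$ --- is compactness and nearest-point-projection bookkeeping.
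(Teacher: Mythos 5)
Your proposal is correct and follows essentially the same route as the paper: uniform density estimates upgrading $L^1$- to Hausdorff convergence, the interior and boundary $\e$-regularity criteria of \cite{DuzaarSteffen} and \cite{dephilippismaggiARMA} (with smallness of excess transferred from the smooth cap $M_0$ to the $F_h$) yielding empty singular sets and uniform local $C^{1,\a}$-graphicality, and then patching of graphs plus normal projection onto $M_0$ and $\bd(M_0)$ to produce the maps $f_{0,h}$ and $\psi_h$, exactly as in \cite{CiLeMaIC1}. The only point where you diverge is \eqref{gre1}: you invoke a Schauder bootstrap to uniform second-order bounds, whereas the paper deduces it from the convergence $\nu_{M_h}(x_h)\to\nu_K(x)$ together with the analogous estimates \eqref{gre2} satisfied by $\nu_K$; both justifications are of comparable (terse) rigor, so this is a difference of bookkeeping rather than of approach.
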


\begin{proof}
  [Proof of Theorem \ref{thm improved convergence to K}] By combining Lemma \ref{lemma hp thm3.5} with \cite[Theorem 3.5]{CiLeMaIC1} we find that for every $\mu\in(0,\mu_0)$ there exist $h(\mu)\in\N$ and, for each $h\ge h(\mu)$, a $C^{1,\a}$-diffeomorphisms $f_h$ between $M_0$ and $M_h$ such that
  \begin{equation}
    \label{jjj}
      \begin{split}
    &\hspace{1.5cm}\mbox{$f_h=f_{0,h}$ on $\bd(M_0)$}\,,\qquad
    \mbox{$f_h=\Id+\psi_h\,\nu_{M_0}$ on $[M_0]_\mu$}\,,
    \\
    &\hspace{1cm}\sup_{h\ge h(\mu)}\|f_h\|_{C^{1,\a}(M_0)}\le C_\a\,,\qquad \lim_{h\to\infty}\|f_h-\Id\|_{C^1(M_0)}=0\,,
    \\
    &\|u_h\|_{C^1(M_0)}\le\frac{C_\a}\mu\,\left\{
    \begin{split}
    \|(f_{0,h}-\Id)\cdot\nu_{M_0}^{co}\|_{C^0(\bd(M_0))}\,,\qquad\mbox{if $n=2$}\,,
    \\
    \|f_{0,h}-\Id\|_{C^1(\bd(M_0))}\,,\qquad\mbox{if $n\ge 3$}\,,
    \end{split}\right .
  \end{split}
  \end{equation}
  where, in the case $n=2$, $f_{0,h}$ is defined by the relations $f_{0,h}(p_0)=p_h$, $f_{0,h}(q_0)=q_h$. Of course \eqref{jjj} implies the conclusions of Theorem \ref{thm improved convergence to K}.
\end{proof}

We now focus on the proof of Lemma \ref{lemma hp thm3.5}. The proof is based on the regularity theory for $(\Lambda,\rho_0)$-minimizers of elliptic integrands as discussed in \cite[Part III]{maggiBOOK} and in \cite{dephilippismaggiARMA}. Given the regularity theorems, the argument is rather standard and so we limit ourselves to describe the main points.

\begin{proof}
  [Proof of Lemma \ref{lemma hp thm3.5}] One deduces that $\hd(M_h,M_0)\to 0$ by $|F_h\Delta K|\to 0$ and by the uniform density estimates satisfied by the sets $F_h$ (recall that the functionals $\PHI_h$ are uniformly elliptic), see for example \cite[Theorem 2.9]{dephilippismaggiARMA}. Let us now set
   \[
  {\rm Reg}(M_h)=\bigg\{x\in M_h:
  \begin{array}{l}
  \textrm{there exists \(r_x>0\) such that \(M_h\cap B_{x,r_x}\)}
  \\
  \textrm{is a \(C^1\)-manifold with boundary  s.t. $\bd(M_h)\subset\pa H$}
  \end{array}\bigg\}\,,
 \]
 for the  regular part of $M_h$ and $\S(M_h)=M_h\setminus{\rm Reg}(M_h)$ for its singular set. For $x\in \Omega$ and $r<\dist(x,\pa \Omega)$, define the {\it spherical excess of $F_h$ at the point $x$, at scale $r$, relative to $H$} as
 \[
 {\bf exc}^H(F_h,x,r)=\inf\Big\{\frac1{r^{n-1}}\int_{B_{x,r}\cap H\cap\pa^*F_h}\frac{|\nu_{F_h}-\nu|^2}2\,d\H^{n-1}:\nu\in S^{n-1}\Big\}\,.
 \]
 By (for example) \cite[Theorem 3.1]{dephilippismaggiARMA} in the case $x\in M_h\cap\pa H$, and \cite{DuzaarSteffen} in the case $x\in M_h\cap H$, there exists $\e_0=\e_0(n,\lambda)>0$ such that
 \begin{equation}\label{singular set char}
 \S(M_h)=\Big\{x\in M_h:\liminf_{r\to 0^+}{\bf exc}^H(F_h,x,r)\ge\e_0\Big\}\,\,.
 \end{equation}
 This characterization of the singular set allows to deduce easily that for every $\tau>0$ one has $\S(M_h)\subset I_\tau(\S(M_0))$ provided $h$ is large enough. In our case $M_0$ is just a spherical cap, and so we have $\S(M_0)=\emptyset$. In particular, $M_h$ is a $C^{1,\a}$-hypersurface with boundary for every $\a\in(0,1)$.

 In fact one has more precise information. We first describe the argument qualitatively. Consider for example a point $x\in M_0\cap H$, and fix $r>0$ such that $\dist(x,\pa H)>r$ and ${\bf exc}^H(K,x,r)<\e_0/2$. The Hausdorff convergence of $M_h$ to $M_0$, and the fact that almost-minimizers converging in volume also converge in perimeter, implies the existence of $x_h\in M_h\cap H$ such that $x_h\to x$ and ${\bf exc}^H(F_h,x_h,r)<\e_0$. One can thus apply the $\e$-regularity criterion to $K$ and to $F_h$ to find that they are both epigraphs of $C^{1,\a}$-functions $v_h$ and $v$ defined on a same $(n-1)$-dimensional disk or radius $c_0\,r$, with $v_h\to v$ in $C^1$ and $c_0=c_0(n,\a)$. By patching this local graphicality property on a uniform scale one come to prove conclusion (ii) in the lemma. This kind of argument is described in great detail in \cite[Theorem 4.1, Lemma 4.3, Lemma 4.4, Theorem 4.12]{CiLeMaIC1} and so we do not further discuss this point.

 We now exploit the same argument at the boundary. Given $x\in\pa H$, $r>0$ and $\nu\in S^{n-1}$ with $\nu\cdot\nu_H=0$, set
 \[
 \D^\nu_{x,r}=\big\{y\in H:|(y-x)\cdot\nu|=0\,,|y-x|<r\big\}\,,
 \qquad
 \C^\nu_{x,r}=\big\{y+t\nu:y\in\D^\nu_{x,r},|t|<r\big\}\,.
 \]
 Let us fix $x\in M_0\cap\pa H$, and consider $r_x>0$ such that ${\bf exc}^H(K,x,r_x)<\e_0/2$. By exploiting a {\it boundary} $\e$-regularity criterion \cite[Theorem 3.1]{dephilippismaggiARMA}, we come to prove that for every $x\in M_0\cap\pa H$ there exist $\nu_x\in S^{n-1}$ with $\nu_x\cdot\nu_H=0$ and functions $v_h,v\in C^{1,\a}(\D_{x,c_0\,r_x}^{\nu_x})$ such that $v_h\to v$ in $C^1(\D_{x,c_0\,r_x}^{\nu_x})$ with
 \[
 \C_{x,c_0\,r_x}^{\nu_x}\cap F_h=\Big\{y\in \C_{x,c_0\,r_x}^{\nu_x}:y\cdot\nu_x>v_h\big(y-(y\cdot\nu_x)\nu_x\big)\Big\}\,,
 \]
 and with an analogous relation between $K$ and $v$. In particular,
 \begin{equation}\label{patch}
   \begin{split}
   \C_{x,c_0\,r_x}^{\nu_x}\cap M_h\cap\pa H&=\Big\{z+v_h(z)\,\nu_x:z\in\ov{\D_{x,c_0\,r_x}^{\nu_x}}\cap\pa H\Big\}\,,
   \\
   \C_{x,c_0\,r_x}^{\nu_x}\cap M_0\cap\pa H&=\Big\{z+v(z)\,\nu_x:z\in\ov{\D_{x,c_0\,r_x}^{\nu_x}}\cap\pa H\Big\}\,.
   \end{split}
 \end{equation}
 Let us now cover the $(n-2)$-dimensional sphere $M_0\cap\pa H$ by finitely many cylinders satisfying \eqref{patch}. By exploiting the fact that $v_h\to v$ in $C^1$ (see \cite[Lemma 4.3]{CiLeMaIC1} for the details of such a construction) we can show that the normal projection over $M_0\cap\pa H$ defines a $C^{1,\a}$-diffeomorphism between $M_h\cap\pa H$ and $M_0\cap\pa H$. Denoting by $f_{0,h}$ the inverse of this map, we complete the proof of conclusion (i).

 Summarizing we have proved the validity of conclusions (i) and (ii). The fact that $M_h$ is compact and connected is also easily inferred by covering a neighborhood of $M_0$ by finitely many cylinders of graphicality for both $M_h$ and $M_0$ and by recalling that $\hd(M_h,M_0)\to 0$. Let us finally consider the vector fields $\nu_{M_h}=\nu_{F_h}$ and $\nu_K$, and notice that
 \begin{equation}\label{gre2}
  \left\{\begin{split}
    &|\nu_K(x)-\nu_K(y)|\le L\,|x-y|\,,
    \\
    &|\nu_K(x)\cdot(y-x)|\le L\,|x-y|^2\,,
  \end{split}\right .\qquad\forall x,y\in K\,,
  \end{equation}
  for a suitably large constant $L$. By arguing as in \cite[Theorem 26.6]{maggiBOOK} we see that if $x_h\in M_h$ and $x_h\to x$, then $x\in M_0$ and $\nu_{M_h}(x_h)\to\nu_K(x)$; by exploiting this fact and \eqref{gre2} we easily deduce \eqref{gre1}.
\end{proof}

\section{Convergence in volume to the ideal droplet}\label{section convergence to K} Throughout this section $A$ denotes an open bounded connected set with boundary of class $C^{1,1}$, while $\s:\pa A\to(-1,1)$ and $g:A\to[0,\infty)$ are Lipschitz functions. The minimum value on $\pa A$ of $\s$ is denoted by $\s_0$,
\[
 \s_0=\min_{\pa A}\s\,.
\]
and we denote by $K=K(\s_0)$ the reference unit volume droplet associated to $\F_{H,\s_0}$ as in \eqref{def K}, where $H=\{x\in\R^n:x_n>0\}$. The goal of this section is showing that minimizers $E_m$ in the variational problem \eqref{variational problem m}, which we recall was defined by
\[
\g(m)=\inf\Big\{\F_{A,\s}(E)+\int_E\,g(x)\,dx:E\subset A\,,|E|=m\Big\}\,,
\]
are such that $\psi_m(E_m)/|\psi_m(E_m)|^{1/n}\to K$ in volume as $m\to 0^+$. Here the maps $\psi_m$ are defined on neighborhoods (of uniformly positive diameter) of $E_m$ and converge in $C^{1,1}$ to the identity map.

\begin{lemma}\label{thm convergenza a K}
  If $A$ is an open bounded connected set of class $C^{1,1}$, $\s\in\Lip(\pa A;(-1,1))$ and $g\in \Lip(A)$ with $g\ge 0$, then there exist positive constants $C_0$ and $m_0$ (depending on $A$, $\s$ and $g$) such that if $E_m$ is a minimizer in \eqref{variational problem m} with $m<m_0$, then there exists $y_m\in\pa A$ such that
  \begin{equation}
    \label{boundedness}
      E_m\subset B_{y_m,C_0\,m^{1/2n}}\qquad 0\le \s(y_m)-\s_0\le C_0\,m^{1/2n}\,.
  \end{equation}
  Moreover, for every $\e>0$ there exists $m_\e\le m_0$ (depending on $A$, $\s$, $g$ and $\e$) such that
  \begin{equation}
    \label{proximity epsilon}
      \inf_{z\in\pa H}|F_m\Delta (z+K)|<\e\,,\qquad\mbox{where}\qquad F_m=\frac{\phi^{-1}_{y_m}(E_m)}{|\phi^{-1}_{y_m}(E_m)|^{1/n}}\,,
  \end{equation}
  where $\phi_{y_m}$ is defined as in Notation \ref{notation boundary of A} below.
\end{lemma}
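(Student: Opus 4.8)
The plan is to establish the containment estimate \eqref{boundedness} first, then deduce the convergence \eqref{proximity epsilon} by mapping $E_m$ into the half-space $H$ and invoking the qualitative stability statement Proposition \ref{lemma psi tau}-(ii).

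\medskip

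\textbf{Step 1: Energy upper bound.} First I would produce a competitor to bound $\g(m)$ from above. Choosing a point $x_0\in\pa A$ with $\s(x_0)=\s_0$ and using a boundary chart $\phi_{x_0}$ from Notation \ref{notation boundary of A}, one transports a small rescaled copy of $K(\s_0)$ into $B_{x_0,2r_0}\cap A$ to obtain an admissible set of volume $m$. Since $\phi_{x_0}\in C^{1,1}$ with $\n\phi_{x_0}(0)$ an isometry, and since $g$ is bounded and $\s$ is Lipschitz, the surface energy of the competitor is $\psi(\s_0)\,m^{(n-1)/n}(1+O(m^{1/n}))$, while the potential term is $O(m)$. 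This gives $\g(m)\le \psi(\s_0)\,m^{(n-1)/n}+C\,m$.

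\medskip

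\textbf{Step 2: A grid/cut-and-paste argument for the diameter bound.} This is the main obstacle. Given the minimizer $E_m$, one slices $\R^n$ by a grid of cubes of side length $\approx m^{1/2n}$ and uses a pigeonhole/discrete-ODE argument on the function counting perimeter in cube layers: if $E_m$ had diameter larger than $C\,m^{1/2n}$, one could cut it along a hyperplane of the grid where the slice area is small compared to the deficit, discard the smaller piece, rescale to restore volume $m$, and obtain a competitor with strictly smaller energy, contradicting minimality (here one uses the relative isoperimetric inequality in $A$ and the energy bound from Step 1 to control the cut). The subtlety is that $A$ curves and $\s$ varies, so near $\pa A$ one works in boundary charts and absorbs the resulting Lipschitz errors; this is the step four of the present lemma referred to as \eqref{a casa} in the introduction. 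The output is $x_m\in A$ with $E_m\subset B_{x_m,C\,m^{1/2n}}$.

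\medskip

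\textbf{Step 3: Moving the center to $\pa A$ and pinning $\s$.} Once the diameter is $O(m^{1/2n})$, a direct comparison argument replaces $x_m$ by $y_m\in\pa A$: sliding the droplet to touch $\pa A$ near a point of near-minimal $\s$ lowers the adhesion energy unless $\s(y_m)-\s_0=O(m^{1/2n})$, because the surface energy varies by only $O(m^{(n-1)/n}\cdot m^{1/2n})$ relative to the leading term while a genuine gap in $\s$ would produce a first-order-in-$m^{(n-1)/n}$ improvement. Combined with the lower bound $\s(y_m)\ge\s_0$ this yields \eqref{boundedness}.

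\medskip

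\textbf{Step 4: Transplanting into $H$ and applying stability.} Using the chart $\phi_{y_m}$, set $F_m=\phi_{y_m}^{-1}(E_m)/\l_m$ with $\l_m=|\phi_{y_m}^{-1}(E_m)|^{1/n}=m^{1/n}(1+O(m^{1/2n}))$, so $F_m\subset H$ and $|F_m|=1$. Because $\n\phi_{y_m}(0)$ is an isometry and $E_m\subset B_{y_m,C m^{1/2n}}$, the change of variables distorts the surface energy $\F_{H,\s_0}$ by a multiplicative factor $1+O(m^{1/2n})$; together with $\s(y_m)-\s_0=O(m^{1/2n})$, the potential bound, and the energy bound of Step 1, one obtains $\F_{H,\s_0}(F_m)-\psi(\s_0)\le C\,m^{1/2n}\to 0$. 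Hence $\F_{H,\s_0}(F_m)\to\psi(\s_0)$, and Proposition \ref{lemma psi tau}-(ii) furnishes $z_m\in\pa H$ with $|(F_m-z_m)\Delta K|\to 0$. Given $\e>0$, there is $m_\e$ such that $m<m_\e$ forces $\inf_{z\in\pa H}|F_m\Delta(z+K)|<\e$, which is \eqref{proximity epsilon}. (Strictly, one runs the last step along sequences and uses the uniformity of the constants in Steps 1--3 to turn the sequential statement into the quantified $m_\e$ claim.)
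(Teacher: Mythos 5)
Your overall architecture (energy upper bound via a chart competitor, grid plus cut-and-rescale for the $O(m^{1/2n})$ diameter, relocation of the center to $\pa A$, transplant through $\phi_{y_m}$ and apply Proposition \ref{lemma psi tau}-(ii)) is the same as the paper's, and Steps 1 and 4 match what is actually done. The genuine gap is in Step 3, in the mechanism you use to pin $\s(y_m)$. You argue that if $\s(y_m)-\s_0$ were much larger than $m^{1/2n}$, sliding the droplet to a minimum point of $\s$ would gain adhesion energy of first order, i.e.\ roughly $(\s(y_m)-\s_0)\,\H^{n-1}(\pa A\cap\pa^*E_m)$. This presupposes that the wetted area is of order $m^{(n-1)/n}$, which is not known at this stage (it is essentially part of what the lemma is proving): if $E_m$ wets the wall only on a set of small $\H^{n-1}$-measure, or not at all, the sliding comparison gains nothing through the adhesion term, and your stated mechanism also does not explain why $E_m$ must lie within $O(m^{1/2n})$ of $\pa A$ in the first place (an interior droplet has no adhesion energy to lower). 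The paper closes all regimes at once: for $E_m\subset B_{y_m,Cm^{1/2n}}$, $y_m\in\pa A$, the chart lower bound \eqref{lower bound FAsE small diameter} holds in the refined form $\F_{A,\s}(E_m)\ge(1-C\,m^{1/2n})\,\psi(\s(y_m))\,m^{(n-1)/n}$, which is valid regardless of how much $E_m$ actually wets the wall because $\psi(\s(y_m))$ is an infimum; if $B_{x_m,Cm^{1/2n}}$ misses $\pa A$ the same reasoning with $n\om_n^{1/n}=\psi(1)>\psi(\s_0)$ contradicts \eqref{upper bound gamma m}; and the strict monotonicity $\psi'>0$ from Proposition \ref{lemma psi tau}-(i), which your proposal never invokes, is exactly what converts $\psi(\s(y_m))\le(1+C\,m^{1/2n})\,\psi(\s_0)$ into $\s(y_m)-\s_0\le C\,m^{1/2n}$ without any lower bound on the wetted area.

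Separately, Step 2 compresses the bulk of the paper's proof (its steps two through six) into an assertion, and the two quantitative engines needed to make it run are not identified: the cube-wise lower bound $\F_{A,\s}(E)\ge\psi(\s_0)\,|E|^{(n-1)/n}(1-C\,\diam E)$ for sets of small diameter, which is what turns the grid decomposition plus the upper bound \eqref{upper bound gamma m} into volume concentration in a single cube, and the inequality $P(E)\le C\,\F_{A,\s}(E)$ of \eqref{general inequality} (proved through a trace field and the relative isoperimetric constant of $A$), without which, when $\s_0\le0$, discarding a far-away piece and applying isoperimetry to it does not produce the contradiction you invoke. Note also that the grid/pigeonhole cut by itself only shows that most of the volume sits in one cube of side $m^{1/2n}$; promoting this to the inclusion $E_m\subset B_{x_m,C\,m^{1/2n}}$ requires the additional truncation-plus-rescaling differential inequality for $u(r)=|E_m\setminus B_{x_m,r}|/m$, carried out inside boundary charts when the droplet sits near $\pa A$. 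You gesture at these points, but as written they remain assertions rather than arguments.
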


\begin{notation}\label{notation boundary of A}
{\rm  (See
\begin{figure}
  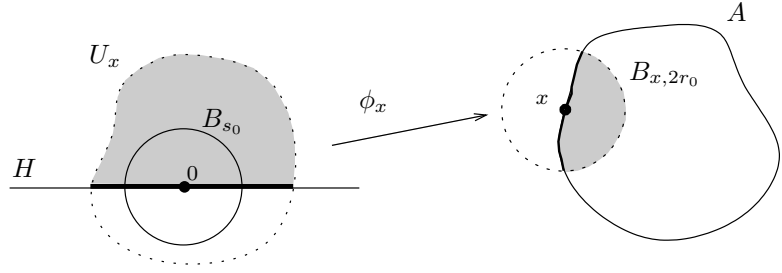\caption{{\small A summary of Notation \ref{notation boundary of A}. The map $\phi_x$ is such that $\phi_x(0)=x$. The constants $r_0$ and $s_0$ are independent of $x\in\pa A$.}}\label{fig notation}
\end{figure}
Figure \ref{fig notation}.) By compactness of $\pa A$, we can find $r_0,s_0>0$ (depending on $A$) so that for every $x\in\pa A$ there exist an open neighborhood $U_x$ of $0\in\R^n$ with $\ov{B}_{s_0}\subset U_x$ and a $C^{1,1}$-diffeomorphism $\phi_x:U_x\to B_{x,2r_0}$ such that $\phi_x(0)=x$, $\nabla\phi_x(0)$ is an orientation preserving isometry, and
\begin{equation}
    \label{def phix}
      \phi_x(U_x\cap H)=B_{x,2r_0}\cap A\,,\qquad \phi_x(U_x\cap\pa H)=B_{x,2r_0}\cap\pa A\,.
\end{equation}
Thanks to the fact that $L=\nabla\phi_x(0)$ is an orientation preserving isometry one has $L^{-1}=L^*$, $\det\,L=1$, $\cof(L)=[\det(L)L^{-1}]^*=L$, and thus the Jacobian of $\nabla\phi_x(0)$ and its tangential Jacobian on ant hyperplane $\nu^\perp$ corresponding to $\nu\in S^{n-1}$ are given by
\begin{equation}
  \label{marina1}
  J\phi_x(0)=|\det\nabla\phi_x(0)|=1\,,\qquad J^{\nu^\perp}\phi_x(0)=|\cof(\nabla\phi_x(0))\nu|=1\,.
\end{equation}
In particular, there exists a constant $C_1$ depending only on $A$ such that
\begin{equation}\label{jacobian phi estimates}
  \|\phi_x\|_{C^{1,1}(U_x)}+\|\phi_x^{-1}\|_{C^{1,1}(B_{x,2\,r_0})}\le C_1\,,\qquad
  \left\{\begin{split}
     &\|J\phi_x-1\|_{C^0(B_s)}\le C_1\,s\,,
     \\
     &\|J^\Sigma\phi_x-1\|_{C^0(\S\cap B_s)}\le C_1\,s\,,
  \end{split}
  \right .
\end{equation}
for every $s<s_0$ and for every $(n-1)$-rectifiable set $\Sigma$ in $\R^n$. We can also assume $C_1\,s_0$ as small as needed depending on $A$. For example, we can certainly entail $J\phi_x\ge1/2$ on $B_{s_0}$.}
\end{notation}

\begin{remark}
  {\rm We recall that $O(m^{1/2n})$ in \eqref{boundedness} is not optimal, and that it will be improved to $O(m^{1/n})$ in the next section. By being able of immediately obtain the latter information we could simplify some technicalities in Lemma \ref{lemma almost minimizers} below. However, this limitation seems an unavoidable consequence of the grid argument used in step five below.}
\end{remark}

\begin{proof}[Proof of Lemma \ref{thm convergenza a K}]
  In the course of the argument, $C$ denotes a generic constant whose value depends on $A$, $g$ and $\s$. The values $r_0$ and $s_0$ introduced in Notation \ref{notation boundary of A} as $A$-dependent constants will be further decreased depending on $A$, $g$ and $\s$.

  \medskip

  \noindent {\it Step one}: We show that
  \begin{equation}
    \label{upper bound gamma m}
      \g(m)\le \psi(\s_0)\,m^{(n-1)/n}\Big(1+C\,m^{1/n}\Big)\,,\qquad\forall m<m_0\,.
  \end{equation}
  To this end, given $m<m_0$ it suffices to construct $E\subset A$ such that
  \[
  |E|=m\,,\qquad \F_{A,\s}(E)\leq \psi(\s_0)\, m^{(n-1)/n} (1+ C m^{1/n})\,.
  \]
  Let us fix $x\in\pa A$ such that $\s(x)=\s_0=\min_{\pa A}\s$, and correspondingly set $U=U_{x}$ and $\phi=\phi_{x}$. We can find $t_0>0$ (depending on $A$ and $\s_0$) such that $K_t=t\,K\subset B_{s_0}$ for every $t<t_0$, and thus, by \eqref{def phix},
  \[
  E(t)=\phi(K_t)\subset B_{x,2\,r_0}\cap A\,,\qquad\forall t\in(0,t_0)\,.
  \]
  By the area formula $|E(t)|=\int_{K_t}\,J\phi$, and since $J\phi\ge 1/2$ on $B_{s_0}$ by \eqref{jacobian phi estimates}, we find that $t\in(0,t_0)\mapsto |E(t)|$ is strictly increasing. In particular, we can find $m_0$ such that
  \[
  (0,m_0)=\{|E(t)|:t\in(0,t_0)\}\,,
  \]
  and for every $m<m_0$ there exists a unique $t(m)< t_0$ such that $m=|E(t(m))|$. We notice that $t(m)\le C\,m^{1/n}$: indeed, since $K_t\subset B_{C\,t}$ and $|K_t|=t^n$ for every $t>0$, by \eqref{jacobian phi estimates} we find
  \[
  m=|E(t(m))|=\int_{K_{t(m)}}J\phi\ge (1-C\,t(m))\,|K_{t(m)}|=(1-C\,t(m))\,t(m)^n\ge\frac{t(m)^n}2\,,
  \]
  where the last inequality follows up to further decreasing the value of $t_0$ (depending on $A$ and $\s_0$). By the area formula,
  \begin{eqnarray*}
    \F_{A,\s}(E(t))+\int_{E(t)}g=\int_{H\cap\pa K_t}J^{\pa K_t}\,\phi\,d\H^{n-1}+
    \int_{\pa H\cap\pa K_t}\s(\phi)\,J^{\pa K_t}\,\phi\,d\H^{n-1}+\int_{K_t}g(\phi)\,J\phi\,,
  \end{eqnarray*}
  so that \eqref{jacobian phi estimates}, $\F_{H,\s_0}(K_t)=t^{n-1}\,\psi(\s_0)$ for every $t>0$, and
  \begin{equation}\label{sigma Lipschitz phi}
  \|\s\circ\phi_x-\s(x)\|_{C^0(\pa H\cap B_s)}\le C\,s\,,\qquad\forall s<s_0\,.
  \end{equation}
  give us
  \begin{eqnarray*}
    \F_{A,\s}(E(t))+\int_{E(t)}g&\le&(1+C\,t)\,P(K_t,H)+(\s_0+C\,t)\,\H^{n-1}(\pa H\cap\pa K_t)+C\,|K_t|
    \\
    &=&\F_{H,\s_0}(K_t)+C\,t\,P(K_t)+C\,t^n\le \psi(\s_0)\,t^{n-1}+C\,t^n\,,
  \end{eqnarray*}
 which combined with $t(m)\le C\,m^{1/n}$ leads to \eqref{upper bound gamma m}.

 \medskip

 \noindent {\it Step two}: We show that, if $E\subset A$ with $\diam(E)<r_0$, then
  \begin{equation}
    \label{lower bound FAsE small diameter}
      \F_{A,\s}(E)\ge\psi(\s_0)\,|E|^{(n-1)/n}\,(1-C\,\diam(E))\,.
  \end{equation}
  Indeed let $r=\diam(E)$. If $E\subset B_{x,r}$ for some $x\in A$ with $\dist(x,\pa A)>r$, then $\F_{\s,A}(E)=P(E)$, and thus
  \begin{equation}
    \label{step two 1}
      \F_{\s,A}(E)=P(E)\ge n\om_n^{1/n}\,|E|^{(n-1)/n}\ge\psi(\s_0)\,|E|^{(n-1)/n}\,,
  \end{equation}
  where we have applied the isoperimetric inequality and the fact that $n\om_n^{1/n}=\psi(1)\ge \psi(\s_0)$ thanks to Proposition \ref{lemma psi tau}-(i). Since \eqref{step two 1} implies \eqref{lower bound FAsE small diameter}, and since $E\subset A$ with $\diam(E)<r$, we are left to consider the case when
  \[
  E\subset B_{x,2r}\,,\qquad\mbox{for some $x\in\pa A$}\,.
  \]
  Set $U=U_x$ and $\phi=\phi_x$. Since $r<r_0$ it makes sense to define $F=\phi^{-1}(E)$. Since $F\subset B_{C\,r}$, by the area formula and by \eqref{jacobian phi estimates} one has
 \begin{eqnarray*}
  \F_{A,\s}(E)&=&\int_{H\cap\pa^* F}J^{\pa^* F}\,\phi\,d\H^{n-1}+\int_{\pa H\cap\pa^* F}\s(\phi)\,J^{\pa^* F}\,\phi\,d\H^{n-1}
  \\
  &\ge& (1-C\,r)\,P(F;H)+(\s(x)-C\,r)\,\H^{n-1}(\pa H\cap\pa^*F)\ge \F_{H,\s_0}(F)-C\,r\,P(F)\,.
 \end{eqnarray*}
 By \cite[Proposition 19.22]{maggiBOOK} we have $\F_{H,\s_0}(F)\ge (1+\s_0)P(F)/2$, thus
 \[
 \F_{A,\s}(E)\ge(1-C\,r)\,\F_{H,\s_0}(F)\ge(1-C\,r)\,\psi(\s_0)\,|F|^{(n-1)/n}\,.
 \]
 Finally, since $|E|=\int_FJ\phi$ implies $|F|^{(n-1)/n}\ge|E|^{(n-1)/n}(1-C\,r)$, one finds \eqref{lower bound FAsE small diameter}.

 \medskip

 \noindent {\it Step three}: We prove that if $m<m_0$, then
 \begin{equation}
   \label{upper bound PEm}
   P(E_m)\le C\,m^{(n-1)/n}\,.
 \end{equation}
 We are going to deduce this from \eqref{upper bound gamma m} and the general inequality
 \begin{equation}
   \label{general inequality}
   P(E)\le C\,\F_{A,\s}(E)\,,\qquad\forall E\subset A\,,|E|=m<m_0\,.
 \end{equation}
 Let us prove \eqref{general inequality}. This is obvious if $\s_0>0$, as in this case,
 \begin{eqnarray*}
  P(E)=P(E;A)+P(E;\pa A)\leq \s_0^{-1}\big( P(E;A)+\s_0\,P(E;\pa A)\big)\le\s_0^{-1}\,\F_{A,\s}(E)\,.
 \end{eqnarray*}
 Let us now assume that $\s_0\le0$, and consider the relative isoperimetric problems
\begin{eqnarray*}
  \l_A(m)&=&\inf\Big\{\frac{P(E;A)}{\H^{n-1}(\pa^* E\cap \pa A)}:|E|=m,E\subset A\Big\}\,,
  \\
  \qquad
  \mu_A&=&\inf\Big\{\frac{P(E;A)}{|E|^{n/(n-1)}}:|E|\le\frac{|A|}2,\,E\subset A\Big\}\,,
\end{eqnarray*}
  so that $\mu_A>0$ (as $A$ is a connected bounded open set with Lipschitz boundary). Let us first show that for all $\tau>0$ there is $m_1$ (depending on $A$ and $\tau$) such that
  \begin{equation}
    \label{lambda A m}
    \l_A(m)\ge\frac1{1+\tau}\,,\qquad\forall m<m_1\,.
  \end{equation}
 Indeed, since $\pa A$ is of class $C^{1,1}$, one can construct $T\in{\rm Lip}(\R^n;\R^n)$ with $T=-\nu_A$ on $\pa A$ and $|T|\le 1$ on $\R^n$. By the divergence theorem, if $E$ is a competitor in $\l_A(m)$, then
 \[
 \H^{n-1}(\pa^* E\cap\pa A)\le P(E;A)+\Big|\int_E\Div T\Big|\le P(E;A)+\Lip(T)\,m\le \Big(1+\,\frac{\Lip(T)\,m^{1/n}}{\mu_A}\Big)\,P(E;A)\,,
 \]
 so that \eqref{lambda A m} follows immediately. Let us now consider $\tau>0$ and $a>0$ such that
 \begin{equation}
   \label{step three 1}
    (1+\tau)\,a\,\s_0=-(a+\s_0-1)\,.
 \end{equation}
 (The reason for this choice will become apparent in a moment.) Notice that \eqref{step three 1} is equivalent to requiring
 \[
 a=\frac{1-\s_0}{1+(1+\tau)\s_0}>0\,,
 \]
 which is certainly possible (by $\s_0>-1$) as soon as $\tau$ is small enough depending on $\s_0$. Let $m_0\le m_1$ for $m_1=m_1(A,\tau)$. By \eqref{lambda A m}, if $E\subset A$ and $|E|=m<m_0$, then
\begin{eqnarray*}
P(E) & = & P(E;A)+(1-\s_0)\H^{n-1}(\pa^* E\cap \pa A)+\s_0\H^{n-1}(\pa^* E\cap \pa A)
\\
& = & \F_{A,\s_0}(E)+(a-(a +\s_0-1))\H^{n-1}(\pa^* E\cap \pa A)
\\
& \le & \F_{A,\s_0}(E)+\frac{a}{\l_A(m)}\,P(E;A) -(a +\s_0-1)\H^{n-1}(\pa^* E\cap \pa A)
\\
& \le & \F_{A,\s_0}(E)+(1+\tau)a\,P(E;A) -(a +\s_0-1)\H^{n-1}(\pa^* E\cap \pa A)
\\
&=& (1+(1+\tau)a)\F_{A,\s_0}(E) \le (1+(1+\tau)\,a)\,\F_{A,\s}(E)\,,
\end{eqnarray*}
where in the identity on the last line we have used \eqref{step three 1}. This completes the proof of \eqref{general inequality}.

\medskip

\noindent {\it Step four}: With the goal in mind of bounding the diameter of $E_m$ in terms of $m$, we now  estimate the normalized volume error one makes in boxing $E_m$ into a (properly centered) cube $Q_r$ of side length $r$. More precisely, we show the existence of positive constants $C_2$ and $r_1\le r_0$ (depending on $A$, $\s$ and $g$) such that if $m<m_0$ and $r<r_1$, then
\begin{equation}
    \label{r estimate}
\frac{|E_m\setminus Q_r|}m\le C_2\,\Big(\frac{m^{1/n}}r+r\Big)^{n/(n-1)}
\end{equation}
for a cube $Q_r$ of side length $r$. Indeed, by applying \cite[Lemma 5.1]{FigalliMaggiLOGCONV} to $E_m^{(1)}$ (the set of points of density one of $E_m$ in $\R^n$), we find that for every $r>0$ there is a partition of (Lebesgue almost all of) $\R^n$ into a family $\mathcal{Q}$ of open parallel cubes with side length $r$ such that
\begin{equation}\label{ineq2}
\frac{|E_m|}{r}=\frac{|E_m^{(1)}|}{r}\ge\frac1{2n}\,\sum_{Q\in\mathcal{Q}}\H^{n-1}(E_m^{(1)}\cap\pa Q)\,,
\end{equation}
and it is actually clear from the proof of that lemma that these cubes can be chosen so that
\begin{equation}
  \label{cubes}
  \H^{n-1}(\pa Q\cap\pa A)=\H^{n-1}(\pa Q\cap\pa^*E_m)=0 \qquad\mbox{for every $Q\in\mathcal{Q}$}\,.
\end{equation}
If $r_1$ is small enough in terms of $A$, $\s$ and $g$ and $C$ is the constant appearing on the left-hand side of \eqref{lower bound FAsE small diameter} (see step two), then we can entail $(1-C\,r)>0$ with $(1-C\,r)^{-1}\le 1+2\,C\,r$ for $r<r_1$. Up to also requiring that $r_1<r_0/\sqrt{n}$, by applying \eqref{lower bound FAsE small diameter} to $E=E_m\cap Q$ we find that
\begin{equation}
  \label{adding up}
  \psi(\s_0)\,|E_m\cap Q|^{(n-1)/n}\le (1+2\,C\,r)\,\F_{A,\s}(E_m\cap Q)\,.
\end{equation}
By \cite[Equation (16.4)]{maggiBOOK}, $E_m^{(1)}\cap A=E_m^{(1)}$, $\H^{n-1}(\pa^*Q\Delta\pa Q)=0$, and \eqref{cubes}, we have
\[
P(E_m\cap Q;A)=P(E_m;Q\cap A)+\H^{n-1}(\pa Q\cap E_m^{(1)})\,,\qquad\forall Q\in\mathcal{Q}\,,
\]
and thus, taking also into account the $\H^{n-1}$-equivalence of the sets $\pa A\cap\pa^*(E_m\cap Q)$ and $Q\cap\pa A\cap\pa^*E_m$ (which follows by \eqref{cubes}),
\[
\F_{A,\s}(E_m\cap Q)= P(E_m;Q\cap A)+\int_{Q\cap\pa A\cap\pa^*E_m}\s+\H^{n-1}(\pa Q\cap E_m^{(1)})\,.
\]
By this last identity, by adding up over $Q$ in \eqref{adding up}, and by \eqref{ineq2} one finds
\begin{eqnarray*}
\psi(\s_0)\,\sum_{Q\in\mathcal{Q}}|E_m\cap Q|^{(n-1)/n} & \le & (1+Cr)\sum_{Q\in\mathcal{Q}}\F_{A,\s}(E_m\cap Q)
\\
& = & (1+Cr)\Big(\F_{A,\s}(E_m) + \sum_{Q\in\mathcal{Q}} \H^{n-1}(E_m^{(1)}\cap \pa Q) \Big)
\\ 
& \le & (1+Cr)\Big(\g(m)-\int_{E_m} g+ 2n\,\frac{m}{r} \Big)
\\
&\le& (1+C\,r)\Big(\g(m)+C\,\frac{m}r\Big)\,.
\end{eqnarray*}
By step one, see \eqref{upper bound gamma m},
\begin{eqnarray*}
\psi(\s_0)\,\sum_{Q\in\mathcal{Q}}|E_m\cap Q|^{(n-1)/n}
\le C\,\frac{m}{r} + (1+C\,r)\,\psi(\s_0)\,m^{(n-1)/n}\,(1+C\,m^{1/n})\,,
\end{eqnarray*}
so that, first dividing both sides by $m^{(n-1)/n}$, and then subtracting $\psi(\s_0)$, we have
\begin{equation}\label{gives}
  \begin{split}
    \psi(\s_0)\Big(\sum_{Q\in\mathcal{Q}}\Big(\frac{|E_m\cap Q|}{m}\Big)^{(n-1)/n} -1\Big)&\le
    C\,\frac{m^{1/n}}{r} +  C\,(r+m^{1/n})
    \\
    &\le C\,\Big(\frac{m^{1/n}}r+r\Big)\,.
  \end{split}
\end{equation}
Now let $\{Q_i\}_{i=1}^N=\{Q\in\mathcal{Q}:|E_m\cap Q|>0\}$. (Notice that $N<\infty$ as only finitely many cubes from $\mathcal{Q}$ can intersect $A$, thus $E_m$.) We can order these cubes so that
\[
\Big|E_m\cap\bigcup_{i=1}^{M-1}Q_i\Big|<\frac{|E_m|}2\,,\qquad \Big|E_m\cap\bigcup_{i=M+1}^N Q_i\Big|<\frac{|E_m|}2\,.
\]
By concavity,
\[
\sum_{i=1}^N\Big(\frac{|E_m\cap Q_i|}{m}\Big)^{(n-1)/n}\ge\Big(\sum_{i=1}^{M-1}\frac{|E_m\cap Q_i|}{m}\Big)^{(n-1)/n}+
\Big(\sum_{i=M}^N\frac{|E_m\cap Q_i|}{m}\Big)^{(n-1)/n}\,,
\]
and since $t^{(n-1)/n}+(1-t)^{(n-1)/n}-1\ge c(n)\,t^{(n-1)/n}$ for every $t\in[0,1/2]$, \eqref{gives} gives
\[
\Big(\sum_{i=1}^{M-1}\frac{|E_m\cap Q_i|}{m}\Big)^{(n-1)/n}\le  C\,\Big(\frac{m^{1/n}}r+r\Big)\,,
\]
and, by an analogous argument,
\[
\Big(\sum_{i=M+1}^N\frac{|E_m\cap Q_i|}{m}\Big)^{(n-1)/n}\le  C\,\Big(\frac{m^{1/n}}r+r\Big)\,.
\]
In conclusion,
\[
\frac{|E_m\setminus Q_M|}m=\sum_{i\ne M}\frac{|E_m\cap Q_i|}{m}\le  C\,\Big(\frac{m^{1/n}}r+r\Big)^{n/(n-1)}\,,
\]
and \eqref{r estimate} is proved.

\medskip

\noindent {\it Step five}: From now on we shall always assume that
\begin{equation}
  \label{m0 r1}
  m_0\le r_1^{2n}\,.
\end{equation}
with $r_1$ as in the previous step. We claim that for every $m<m_0$ there exists $x_m\in A$ such that
\begin{equation}
  \label{r estimate 2}
  \big|E_m\setminus B_{x_m,C(n)m^{1/2n}}\big|\le C_3\,m^{1+[1/2(n-1)]}\,,
\end{equation}
where $C_3=C_3(A,\s,g)$. To prove this we apply step four with $r=m^{1/2n}$ (as we can as $r<r_1$ by \eqref{m0 r1}) to find a cube $Q_r$ of side length $r$ such that $Q_r\cap A\ne\emptyset$ and
\[
|E_m\setminus Q_r|\le C_2\,m\,\Big(r+\frac{m^{1/n}}r\Big)^{n/(n-1)}\le C_3\,m^{1+[1/2(n-1)]}\,.
\]
Since $Q_r\cap A\ne\emptyset$, there exists $x_m\in A$ such that $Q_r\subset B_{x_m,C(n)\,r}$, and thus \eqref{r estimate 2} is proved.

\medskip

\noindent {\it Step six}: We prove if $m<m_0$ and if $x_m$ is defined as in step five, then
  \begin{equation}
  \label{boundedness 2}
  E_m\subset B_{x_m,C\,m^{1/2n}}\,,
  \end{equation}
  for a constant $C=C(A,\s,g)$. To prove our assertion it suffices to show that given a sequence $m_h\to 0^+$, then, possibly up to extracting subsequences, one has
  \begin{equation}
  \label{boundedness 2h}
  E_h\subset B_{x_h,C\,m_h^{1/2n}}\,,\qquad\mbox{where}\quad E_h=E_{m_h}\,,\quad x_h=x_{m_h}\,,
  \end{equation}
  for some constant $C$. Up to subsequences we may assume that $x_h\to x_0$ for some $x_0\in\ov{A}$. We now distinguish two cases.

  \smallskip

  \noindent {\it Case one, $x_0\in A$}: We set, for $C(n)$ as in the left-hand side of \eqref{r estimate 2},
  \[
  I=\Big(C(n)\,m_h^{1/2n},\frac{\dist(x_0,\pa A)}4\Big)\,,
  \]
  and, for every $r\in I$, we let
  \begin{equation}
    \label{Ehr 00}
      E_h^r=E_h\cap B_{x_h,r}\,,\qquad u_h(r)=\frac{|E_h\setminus B_{x_h,r}|}{m_h}\,,\qquad \l_h(r)=\Big(\frac{m_h}{|E_h^r|}\Big)^{1/n}\,.
  \end{equation}
  If $\hat{E}_h^r$ denotes the dilation of $E_h^r$ by a factor $\l_h(r)$ with respect to the point $x_0$, then
  \begin{equation}
    \label{Ehr 1}
      |\hat{E}_h^r|=m_h\,,\qquad \hat{E}_h^r\subset B_{x_0,\l_h(r)\,\dist(x_0,\pa A)/2}\qquad\forall r\in I\,,
  \end{equation}
  provided $h$ is large enough. Indeed the inclusion follows by noticing that, for $h$ large,
  \[
  E_h^r\subset B_{x_h,r}\subset B_{x_0,r+|x_h-x_0|}\subset B_{x_0,\dist(x_0,\pa A)/2}\,.
  \]
  Now, since $u_h(r)$ is decreasing in $r$, by \eqref{r estimate 2} we find
  \begin{equation}
    \label{Ehr 0}
      u_h(r)\le u_h(C(n)\,m_h^{1/2n})\le C_3\,m_h^{1/2(n-1)}\,,\qquad\forall r\in I\,,
  \end{equation}
  so that, for $h$ large enough,
  \begin{equation}
    \label{Ehr -1}
      \l_h(r)=\frac{1}{(1-u_h(r))^{1/n}}\le 1+C\,u_h(r)\le 1+C\,m_h^{1/2(n-1)}\le 2\,,
  \end{equation}
  and thus
  \begin{equation}
    \label{Ehr 2}
      \hat{E}_h^r\subset B_{x_0,\dist(x_0,\pa A)}\subset A\,,\qquad \forall r\in I\,.
  \end{equation}
  By \eqref{Ehr 1} and \eqref{Ehr 2} we can exploit the minimality of $E_h$ to deduce
  \begin{equation}
    \label{Ehr 3.1}
  \F_{A,\s}(E_h)+\int_{E_h}g\le\F_{A,\s}(\hat{E}_h^r)+\int_{\hat{E}_h^r}g=P(\hat{E}_h^r)+\int_{\hat{E}_h^r}g\,.
  \end{equation}
  We notice that by $g\ge0$, $\hat{E}^r_h=x_0+\l_h(r)(E_h^r-x_0)$, and \eqref{Ehr -1}
  \begin{eqnarray}\nonumber
    \int_{\hat{E}_h^r}g-\int_{E_h}g&\le&\l_h(r)^n\int_{E_h^r}g(x_0+\l_h(y-x_0))\,dy-\int_{E_h^r}g
    \\\nonumber
    &\le&C\,\Big(\|g\|_{C^0(A)}\,m_h\,u_h(r)+\int_{E_h^r}g(x_0+\l_h(y-x_0))-g(y)\,dy\Big)
    \\\nonumber
    &\le&C\,\Big(\|g\|_{C^0(A)}\,m_h\,u_h(r)+\Lip(g)\,|\l_h(r)-1|\,\int_{E_h^r}|y-x_0|\,dy\Big)
    \\\label{Ehr 3.2}
    &\le& C\,m_h\,u_h(r)\,.
  \end{eqnarray}
  At the same time, since $\H^{n-1}( E_h\cap\pa B_{x_h,r})=-m_h\,u_h'(r)$ for a.e. $r>0$, we have
  \begin{eqnarray}\nonumber
  P(\hat{E}_h^r)&=&\l_h(r)^{n-1}\,P(E_h\cap B_{x_h,r})\le (1+C\,u_h(r))\,\big(P(E_h;B_{x_h,r})+m_h\,|u_h'(r)|\big)
  \\\label{Ehr 3.3}
  &\le& P(E_h;B_{x_h,r})+C\,m_h\,\big(|u_h'(r)|+m_h^{-1/n}\,u_h(r)\big)\,,
  \end{eqnarray}
  where we have also used the fact that $P(E_h;B_{x_h,r})\le P(E_h)\le C\,m_h^{(n-1)/n}$. By combining \eqref{Ehr 3.1}, \eqref{Ehr 3.2} and \eqref{Ehr 3.3} we find that for a.e. $r\in I$,
  \[
  P(E_h;A)+\int_{\pa A\cap\pa^* E_h}\s\le P(E_h;B_{x_h,r})+C\,m_h\,\big(|u_h'(r)|+m_h^{-1/n}\,u_h(r)\big)\,,
  \]
  that is
  \begin{eqnarray*}
    C\,m_h\,\big(|u_h'(r)|+m_h^{-1/n}\,u_h(r)\big)\ge P(E_h;A\setminus B_{x_h,r})+\int_{\pa A\cap\pa^* E_h}\s\,,
  \end{eqnarray*}
  for a.e. $r\in I$. By adding up $\H^{n-1}( E_h\cap\pa B_{x_h,r})=m_h\,|u_h'(r)|$ to both sides and using that $B_{x_h,r}\cap\pa A=\emptyset$ implies $\pa A\cap\pa^*E_h=\pa A\cap\pa^*(E_h\setminus B_{x_h,r})$, we conclude
  \begin{eqnarray}\label{Ehr 4}
    C\,m_h\,\big(|u_h'(r)|+m_h^{-1/n}\,u_h(r)\big)&\ge&P(E_h\setminus B_{x_h,r};A)+\int_{\pa A\cap\pa^*E_h}\s=\F_{A,\s}(E_h\setminus B_{x_h,r})\hspace{0.3cm}
    \\\nonumber
    &\ge& C^{-1}\,P(E_h\setminus B_{x_h,r})\ge C^{-1}\,n\om_n^{1/n}\,m_h^{(n-1)/n}\,u_h(r)^{(n-1)/n}\,,
  \end{eqnarray}
  for a.e. $r\in I$, where in the last line we have used \eqref{general inequality} and the isoperimetric inequality. Thanks to \eqref{Ehr 0}, provided $h$ is large enough, one has
  \[
  C\,u_h(r)\le \frac{n\om_n^{1/n}}{2}\,u_h(r)^{(n-1)/n}\,,\qquad \forall r\in I\,,
  \]
  so that, for a.e. $r\in I$,
  \[
  C\,m_h\,|u_h'(r)|\ge \frac{n\om_n^{1/n}}2\,m_h^{(n-1)/n}\,u_h(r)^{(n-1)/n}\,,\quad\mbox{i.e.}\quad [(m_hu_h)^{1/n}]'(r)\le -C\,.
  \]
  By integrating this inequality over $(C(n)\,m_h^{1/2n},r)$ and by \eqref{Ehr 0}, one finds that for every $r\in I$
  \begin{eqnarray*}
  (m_h\,u_h(r))^{1/n}&\le&( m_hu_h(C(n)m_h^{1/2n}))^{1/n}-C\big(r-C(n)m_h^{1/2n}\big)
  \\
  &\le&(C_3\,m_h^{1+[1/2(n-1)]})^{1/n}-C\big(r-C(n)m_h^{1/2n}\big)\,.
  \end{eqnarray*}
  For a suitably large value of $C_*$ we find that
  \[
  u_h(r_*)=0\,,\qquad\mbox{if}\qquad r_*= C(n)\,m_h^{1/2n}+C_* (m_h^{1+[1/2(n-1)]})^{1/n}\,,
  \]
  where $r_*\in I$ provided $h$ is large enough in terms of $C_*$. Since $r_*\le C\,m_h^{1/2n}$ this completes the proof of \eqref{boundedness 2h} in case one.

  \smallskip

  \noindent {\it Case two, $x_0\in\pa A$}: Let $r_0$, $s_0$, $U=U_{x_0}$ and $\phi=\phi_{x_0}$ be as in \eqref{def phix}. For a constant $N\ge 2$ to be determined later on in terms of $n$ and $A$, let us consider $L>0$ (depending on $N$, $n$ and $A$) such that
  \begin{equation}
    \label{r0 s0 L}  B_{x_0,r_0/L}\subset \phi(B_{s_0/N})\,.
  \end{equation}
  As in case one, we define $E_h^r=E_h\cap B_{x_h,r}$ and $u_h(r)=|E_h\setminus B_{x_h,r}|/m_h$ for every $r\in I$, where now -- with $C(n)$ as in the left-hand side of \eqref{r estimate 2} -- we take
  \[
  I=\Big(C(n)m_h^{1/2n},\frac{r_0}{2L}\Big)\,.
  \]
  For $h$ large enough and $r\in I$, by \eqref{r0 s0 L} we  have
  \[
  E_h^r\subset B_{x_h,r}\cap A\subset B_{x_0,r_0/L}\cap A\subset\phi(B_{s_0/N})\cap A\,,
  \]
  so that it makes sense to consider
  \begin{equation}
    \label{Ehr 5}
  \phi^{-1}(E_h^r)\subset B_{s_0/N}\cap H\subset B_{s_0/2}\cap H\,.
  \end{equation}
  We claim that for every $h$ large enough and for every $r\in I$ there exists $\l_h(r)\in(1,2)$ such that
  \[
  \hat{E}_h^r=\phi\big(\l_h(r)\,\phi^{-1}(E_h^r)\big)
  \]
  satisfies
  \begin{equation}
    \label{Ehr 9}
      |\hat{E}_h^r|=m_h\,,\qquad \hat{E}_h^r\subset A\,,\qquad 1\le \l_h(r)\le 1+C\,u_h(r)\,.
  \end{equation}
  Indeed, by \eqref{Ehr 5} we find
  \begin{equation}
    \label{Ehr 6}
      \l\phi^{-1}(E_h^r)\subset H\cap B_{\l s_0/2}\subset H\cap B_{s_0}\subset H\cap U\,,\qquad\forall\l<2\,,
  \end{equation}
  so that $\phi(\l\,\phi^{-1}(E_h^r))$ is defined for every $\l<2$. If $w_h^r(\l)=|\phi(\l\,\phi^{-1}(E_h^r))|$, $\l<2$, then we have
  \[
  w_h^r(1)=|E_h^r|=m_h(1-u_h(r))\le m_h\,,
  \]
  while at the same time, if $1<\l<2$ then by \eqref{jacobian phi estimates} and \eqref{Ehr 6} we find
  \begin{eqnarray}\nonumber
    w_h^r(\l)&=&\int_{\l\,\phi^{-1}(E_h^r)}J\phi\ge(1-C_1s_0)\l^n|\phi^{-1}(E_h^r)|
    =(1-C_1s_0)\l^n\int_{E_h^r}J\phi^{-1}(x)\,dx
    \\\label{Ehr 7}
    &=&(1-C_1s_0)\l^n\int_{E_h^r}\frac{dx}{J\phi(\phi^{-1}(x))}
    \ge\frac{1-C_1s_0}{1+C_1s_0}\,\l^n\,|E_h^r|\,,
  \end{eqnarray}
  that is, by \eqref{Ehr 0} (which still holds for every $r\in I$ even with the new definition of $I$)
  \[
  \frac{w_h^r(\l)}{m_h}\ge \frac{1-C_1s_0}{1+C_1s_0}\,(1-u_h(r))\,\l^n\ge \frac{1-C_1s_0}{1+C_1s_0}\,\big(1-C_3\,m_h^{1/2n}\big)\,\l^n\,.
  \]
  In particular, up to further decreasing the value of $s_0$ and for every $h$ large enough, one can always find $\l_h^*(r)\in(1,2)$ such that
  \[
  w_h^r(\l_h^*(r))> m_h\ge w_h^r(1)\,.
  \]
  Since $w_h^r(\l)=\l^n\int_{\phi^{-1}(E_h^r)}J\phi(\l z)\,dz$ is a continuous function, we find $\l_h(r)\in[1,2)$ such that
 \begin{eqnarray*}
 m_h=w_h^r(\l_h(r))=\l_h(r)^n\,\int_{\phi^{-1}(E_h^r)}J\phi(\l_h(r)z)\,dz
 =
 \l_h(r)^n\,\int_{E_h^r}J\phi(\l_h(r)\phi^{-1}(y))\,J\phi^{-1}(y)\,dy
 \end{eqnarray*}
 so that
 \begin{equation}
   \label{Ehr 8}
    m_h\,u_h(r)=m_h-|E_h^r|=\int_{E_h^r}v(\l_h(r),y)\,dy\,.
 \end{equation}
 where we have set
 \[
 v(\l,y)=\l^n\,J\phi(\l\phi^{-1}(y))\,J\phi^{-1}(y)-1\,,\qquad y\in B_{x_0,2\,r_0}\cap A\,,\l\in[1,2)\,.
 \]
 Now, if $y\in E_h^r$, then $|\phi^{-1}(y)|\le s_0/N$ by \eqref{Ehr 5}, so that for every $\l\in[1,2)$ and $y\in E_h^r$ we find
 \begin{eqnarray*}
   v(\l,y)&\ge&\l^n-1+\l^n\Big(J\phi(\l\phi^{-1}(y))-J\phi(\phi^{-1}(y))\Big)\,J\phi^{-1}(y)
   \\
   &\ge& n\,(\l-1)-2^n\,\Big|J\phi(\l\phi^{-1}(y))-J\phi(\phi^{-1}(y))\Big|\,J\phi^{-1}(y)
   \\
   &\ge& n\,(\l-1)-C\,(\l-1)\,|\phi^{-1}(y)|\ge \Big(n-C_4\,\frac{s_0}{N}\Big)\,(\l-1)\,,
 \end{eqnarray*}
 where $C_4=C_4(n,A)$. Provided we pick $N$ suitably large in terms of $n$ and $A$ we thus find
 \[
 v(\l,y)\ge \l-1\,,\qquad\forall y\in E_h^r\,,\l\in(1,2)\,,
 \]
 and thus deduce from \eqref{Ehr 8} and $|E_h^r|\ge m_h/2$ that if $h$ is large enough, then
 \[
 \l_h(r)\le 1+C\,u_h(r)\,,\qquad\forall r\in I\,.
 \]
 This completes the proof of \eqref{Ehr 9}. By \eqref{Ehr 9} and the minimality of $E_h$ we obtain
 \begin{equation}
   \label{Ehr 10}
    \F_{A,\s}(E_h)+\int_{E_h}g\le\F_{A,\s}(\hat{E}_h^r)+\int_{\hat{E}_h^r}g\,.
 \end{equation}
 On the one hand
 \begin{eqnarray*}
   \int_{\hat{E}_h^r}g-\int_{E_h}g&\le&\int_{E_h^r}\Big(g\big(\phi(\l_h(r)\phi^{-1}(y))\big)\,\big(v(\l_h(r),y)+1\big)-g(y)\Big)\,dy
   \\
   &\le&
   C\,\int_{E_h^r}\Big(g\big(\phi(\l_h(r)\phi^{-1}(y))\big)-g(y)\Big)\,dy
   +\int_{E_h^r}g(y)\,v(\l_h(r),y)\,dy
   \\
   &\le&
   C\,\Lip(g)\int_{E_h^r}\Big|\phi(\l_h(r)\phi^{-1}(y))\big)-y\Big|\,dy
   +\|g\|_{C^0(A)}\,\int_{E_h^r}\,v(\l_h(r),y)\,dy
   \\
   &\le& C\,(\l_h(r)-1)\,|E_h^r|\le C\,m_h\,u_h(r)\,;
 \end{eqnarray*}
 on the other hand, by repeatedly applying the area formula and by \cite[Proposition 17.1]{maggiBOOK}
 \[
 P(\hat{E}_h^r;A)=\l_h(r)^{n-1}\int_{A\cap\pa^*E_h^r}   \big(J^{\l_h(r)\,\phi^{-1}(\pa^*E_h^r)}\phi\big)(\l_h(r)\phi^{-1}(x))\,J^{\pa^*E_h^r}\phi(x)\,d\H^{n-1}(x)\,,
 \]
 and since for every $(n-1)$-rectifiable set $\S\subset B_{x_0,2r_0}\cap A$ we have
 \[
 \big(J^{\phi^{-1}(\S)}\phi\big)(\phi^{-1}(x))J^\S\phi(x)=1\,,\qquad\mbox{for $\H^{n-1}$-a.e. $x\in\S$}\,,
 \]
 by $\l_h(r)\le 1+C\,u_h(r)$ we find
 \[
 P(\hat{E}_h^r;A)\le (1+C\,u_h(r))\,P(E_h^r;A)\,,\qquad\forall r\in I\,,
 \]
 and similarly, since $\s$ is a Lipschitz function,
 \[
 \int_{\pa A\cap\pa^*\hat{E}_h^r}\s\le(1+C\,u_h(r))\,\int_{\pa A\cap\pa^*E_h^r}\s\,,\qquad\forall r\in I\,.
 \]
 By combining these estimates with \eqref{Ehr 10} we thus find
 \[
 P(E_h;A)+\int_{\pa A\cap\pa^* E_h}\s\le (1+C\,u_h(r))\Big(P(E_h^r;A)+\int_{\pa A\cap\pa^* E_h^r}\s\Big)+C\,m_h\,u_h(r)\,,
 \]
 and thus, rearranging terms and for a.e. $r\in I$,
 \begin{eqnarray}\label{mb1}
 P(E_h;A\setminus B_{x_h,r})+\int_{(\pa A\cap\pa^* E_h)\setminus B_{x_h,r}}\s
 \le
 C\Big(m_h\,|u_h'(r)|+u_h(r)\,\F_{A,\s}(E_h^r)+m_h\,u_h(r)\Big)\,.
  \end{eqnarray}
  Since $\F_{A,\s}\le P$ on any subset of $A$ and since perimeter decreased under intersection with convex sets, we have
  \begin{equation}
    \label{mb2}
      \F_{A,\s}(E_h^r)\le P(E_h^r)\le P(E_h)\le C\,m_h^{(n-1)/n}\,,
  \end{equation}
  where in the last inequality we have used \eqref{upper bound PEm}. By combining \eqref{mb1} and \eqref{mb2}, and by adding up $m_h\,|u_h'(r)|=\H^{n-1}(E_h\cap\pa B_{x_h,r})$ to both sides of
  the resulting inequality, we eventually get
 \begin{eqnarray*}
 \F_{A,\s}(E_h\setminus B_{x_h,r})\le C\,m_h\,\big(|u_h'(r)|+\,m_h^{-1/n}\,u_h(r)\big)\,,
 \end{eqnarray*}
 for a.e. $r\in I$, which is analogous to \eqref{Ehr 4}. From here we conclude by arguing exactly as in case one. This completes the proof of \eqref{boundedness 2}.

\medskip

\noindent {\it Step seven}: We prove \eqref{boundedness}. We first notice that it must be $B_{x_m,C\,m^{1/2n}}\cap\pa A\ne\emptyset$, for otherwise by \eqref{boundedness 2} we would get
  \[
  \F_{A,\s}(E_m)=P(E_m)=n\,\om_n^{1/n}\,m^{(n-1)/n}=\Big(\psi(\s_0)+\de\Big)\,\,m^{(n-1)/n}\,,
  \]
  for some positive $\de$ independent of $m$, thus contradicting \eqref{upper bound gamma m}: in particular,
  \begin{equation}
    \label{boundedness reduction 2}
      E_m\subset B_{y_m,C\,m^{1/2n}}\qquad\mbox{for some $y_m\in\pa A$}\,.
  \end{equation}
  This proves the first part of \eqref{boundedness}. We now prove that $\s(y_m)-\s_0\le C\,m^{1/2n}$. Let us set
  \[
  \s_1=\max_{\pa A}\s\,,\qquad c_0=\inf\{\psi'(\tau):\tau\in[\s_0,\s_1]\}\,,
  \]
  so that $c_0>0$ as $[\s_0,\s_1]\cc(-1,1)$ and $\psi'$ is continuous with $\psi'>0$ on $(-1,1)$ thanks to Proposition \ref{lemma psi tau}-(i). In particular,
  \[
  \psi(\tau)\ge\psi(\s_0)+c_0\,(\tau-\s_0)\,,\qquad\forall \tau\in[\s_0,\s_1]\,,
  \]
  and since $\s(y_m)\in[\s_0,\s_1]$, by \eqref{upper bound gamma m} and \eqref{lower bound FAsE small diameter} (which we can apply thanks to \eqref{boundedness reduction 2}), we conclude that we have
  \begin{eqnarray}\nonumber
  (1+C\,m^{1/n})\,\psi(\s_0)\ge\frac{\F_{A,\s}(E_m)}{m^{(n-1)/n}}&\ge&(1-C\,m^{1/2n})\,\psi(\s(y_m))
  \\\label{since now}
  &\ge&(1-C\,m^{1/2n})\Big(\psi(\s_0)+c_0\,(\s(y_m)-\s_0)\Big)\,.
  \end{eqnarray}
  This completes the proof of \eqref{boundedness}.

\medskip

\noindent {\it Step eight}: We prove \eqref{proximity epsilon}. Thanks to \eqref{boundedness} it makes sense to define
\[
F_m=\frac{\phi_{y_m}^{-1}(E_m)}{|\phi_{y_m}^{-1}(E_m)|^{1/n}}\,.
\]
Thanks to Proposition \ref{lemma psi tau}-(ii), it is enough to fix $m_h\to 0^+$, set
\[
E_h=E_{m_h}\,,\qquad F_h=F_{m_h}\,,\qquad y_h=y_{m_h}\,,\qquad \phi_h=\phi_{y_h}\,,
\]
and show that
\begin{equation}
  \label{fine thm 2}
  \lim_{h\to\infty}\F_{H,\s_0}(F_h)=\psi(\s_0)\,.
\end{equation}
We first notice that
  \begin{equation}
    \label{stay put}
      \Big|\frac{|\phi_h^{-1}(E_h)|}{m_h}-1\Big|\le \frac1{m_h}\int_{E_h}|J\phi_{h}^{-1}-1|\le\frac{C}{m_h}\int_{E_h}|y-y_h|\,dy\le C\,m_h^{1/2n}\,,
  \end{equation}
  where we have used $J\phi_h^{-1}(y_h)=1$ and \eqref{boundedness reduction 2}. Similarly, by the area formula on rectifiable sets, one sees that
  \begin{eqnarray*}
    \big|P(\phi_h^{-1}(E_h);H)-P(E_h;A)\big|&\le&\int_{A\cap\pa^*E_h}|J^{\pa^*E_h}\phi_h^{-1}-1|
    \\
    &\le&C\int_{A\cap\pa^*E_h}|y-y_h|\,d\H^{n-1}(y)\le C\,P(E_h;A)\,m_h^{1/2n}
  \end{eqnarray*}
  and, again by \eqref{boundedness},
  \begin{eqnarray*}
    \Big|\s_0\,P(\phi_h^{-1}(E_h);\pa H)-\int_{\pa A\cap\pa^*E_h}\s\Big|&\le&\int_{\pa A\cap\pa^*E_h}|\s_0\,J^{\pa^*E_h}\phi_h^{-1}-\s|
    \\
    &\le&C\int_{\pa A\cap\pa^*E_h}\big(|y-y_h|+\s(y_h)-\s_0\big)\,d\H^{n-1}(y)
    \\
    &\le&C\,P(E_h;\pa A)\,m_h^{1/2n}\,,
  \end{eqnarray*}
  so that, thanks to \eqref{upper bound PEm}
  \[
  |\F_{A,\s}(E_h)-\F_{H,\s_0}(\phi_h^{-1}(E_h))|\le C\,P(E_h)\,m_h^{1/2n}\le C\,m_h^{(n-1)/n}\,m_h^{1/2n}\,.
  \]
  By combining this last estimate with \eqref{stay put} and  \eqref{upper bound gamma m} we thus find that
  \begin{eqnarray*}
    \psi(\s_0)&\le&\F_{H,\s_0}(F_h)=|\phi_h^{-1}(E_h)|^{(1-n)/n}\,\F_{H,\s_0}(\phi_h^{-1}(E_h))
    \\
    &\le&m_h^{(1-n)/n}(1+C\,m_h^{1/2n})\,\big(\psi(\s_0)\,m_h^{(n-1)/n}\,(1+C\,m^{1/n})+C\,m_h^{(n-1)/n}\,m_h^{1/2n}\big)
    \\
    &\le&(1+C\,m_h^{1/2n})\psi(\s_0)\,,
  \end{eqnarray*}
  so that \eqref{fine thm 2} follows.
\end{proof}

\section{$C^{1,\a}$-convergence to the ideal droplet}\label{section convergence to K c1alpha} We now conclude the analysis started in Lemma \ref{thm convergenza a K}. Let us recall that so far we have proved the existence of $m_0>0$ such that if $E_m$ is a minimizer in the variational problem
\[
 \g(m)=\inf\Big\{\F_{A,\s}(E)+\int_E\,g(x)\,dx:E\subset A\,,|E|=m\Big\}\,,
\]
(introduced in \eqref{variational problem m}) with $m<m_0$, then for some $y_m\in\pa A$ and setting
\[
\phi_m=\phi_{y_m}\,,\qquad \l_m=|\phi^{-1}_m(E_m)|^{1/n}\,,\qquad F_m=\frac{\phi_m^{-1}(E_m)}{\l_m}\,,
\]
one has
\begin{equation}
    \label{lambda m ordine}
    \begin{split}
    E_m\subset B_{y_m,C\,m^{1/2n}}\,,\qquad 0\le \s(y_m)-\s_0\le C\,m^{1/2n}\,,
    \\
     \Big|\frac{\l_m}{m^{1/n}}-1\Big|\le C\,m^{1/2n}\,,\qquad\lim_{m\to 0^+}|(F_m-z_m)\Delta K|=0\,,
    \end{split}
\end{equation}
where $z_m\in\pa H$, $K=K(\s_0)$, $\s_0=\min_{\pa A}\s$; see \eqref{boundedness}, \eqref{proximity epsilon} and \eqref{stay put}. Here, as it was set in Notation \ref{notation boundary of A}, $\phi_m$ is a $C^{1,1}$-diffeomorphism between $U_m=U_{y_m}$ and $B_{y_m,2\,r_0}$ such that $\phi_m(0)=y_m$, $\nabla\phi_m(0)$ is a linear isometry of $\R^n$, $B_{s_0}\cc U_m$, and $\phi_m(U_m\cap H)=B_{y_m,2r_0}\cap A$, where $r_0$ and $s_0$ are positive constant depending on $A$, $g$ and $\s$ (whose value will be further decreased in the course of the proof). Moreover, we notice that, as a consequence of \eqref{jacobian phi estimates}, one has that
\begin{equation}
  \label{lambda m ordine 2}
  \|\phi_m\|_{C^{1,1}(U_m)}\le C\,,\qquad \|J\phi_m-1\|_{C^0(B_s)}\le C\,s\qquad\forall s<s_0\,,
\end{equation}
for $C$ depending on $A$ only.  With this situation in mind, we now improve the convergence in volume of $F_m-z_m$ to $K$ into $C^{1,\a}$-convergence. Taking into account Theorem \ref{thm improved convergence to K} it will suffice to show that the sets $F_m$ satisfy uniform almost-minimality conditions with respect to uniformly elliptic functionals.

\begin{lemma}\label{lemma almost minimizers}
  Under the assumptions of Lemma \ref{thm convergenza a K}, and with the notation introduced at beginning of this section, there exists $\l\ge 1$, $\ell\,,\Lambda\ge0$ and $C\,,\rho_0>0$ (depending on $A$, $\s$ and $g$) and elliptic integrands
  \[
  \Psi_m\in\X(B_{C/m^{1/2n}},\l,\ell)
  \]
  such that, for every $m<m_0$, $F_m$ is a $(\Lambda,\rho_0)$-minimizer of $\mathbf{\Psi}_m$ in $(B_{C/m^{1/2n}},H)$, where
  \[
  I_{3\rho_0}(F_m)\cc B_{C/m^{1/2n}}\,.
  \]
\end{lemma}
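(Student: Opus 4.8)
The plan is to transfer the volume‑constrained minimality of $E_m$ into a local almost‑minimality for $F_m$ by pulling the Gauss free energy back through the boundary chart $\tilde\phi_m(x):=\phi_m(\l_m x)$ (so that $E_m=\tilde\phi_m(F_m)$), and to convert the adhesion term into an elliptic integrand by a Gauss--Green (``reflection'') identity. Concretely, fix a Lipschitz extension $\tilde\s$ of $\s$ to $\R^n$ with $\|\tilde\s\|_{C^0(\R^n)}=\|\s\|_{C^0(\pa A)}<1$, write $B_m(x):=\cof(\nabla\phi_m(\l_m x))$, and set
\[
c_m(x):=-\,\tilde\s\big(\phi_m(\l_m x)\big)\,|B_m(x)\,e_n|\,e_n\,,\qquad
\PSI_m(x,v):=|B_m(x)\,v|-c_m(x)\cdot v\,,
\]
defined for $x\in\l_m^{-1}U_m\supset B_{C/m^{1/2n}}$ and extended one‑homogeneously in $v\in\R^n$.

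Step one is the pull‑back identity. For every $F\subset H$ of finite perimeter with $\ov F\subset B_{C/m^{1/2n}}$, the area formula on rectifiable sets (cf. \cite[Proposition 17.1]{maggiBOOK}), together with $\nu_F=-e_n$ on $\pa H\cap\pa^*F$ and $\cof(\nabla\tilde\phi_m)=\l_m^{n-1}B_m$, gives $P(\tilde\phi_m(F);A)=\l_m^{n-1}\int_{H\cap\pa^*F}|B_m\nu_F|\,d\H^{n-1}$ and $\int_{\pa A\cap\pa^*\tilde\phi_m(F)}\s\,d\H^{n-1}=\l_m^{n-1}\int_{\pa H\cap\pa^*F}\tilde\s(\phi_m(\l_m\cdot))\,|B_m e_n|\,d\H^{n-1}$. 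Applying the Gauss--Green formula on the bounded set $F$ to the Lipschitz field $X_m:=\l_m^{n-1}c_m$ and using $F\subset H$ to split $\pa^*F=(H\cap\pa^*F)\cup(\pa H\cap\pa^*F)$, the second integral equals $\int_F\Div X_m-\l_m^{n-1}\int_{H\cap\pa^*F}c_m\cdot\nu_F\,d\H^{n-1}$, whence
\[
\F_{A,\s}(\tilde\phi_m(F))+\int_{\tilde\phi_m(F)}g=\l_m^{n-1}\,\PSI_m(F;H)+\int_F G_m\,dx\,,\qquad
G_m:=\Div X_m+\l_m^n\,(g\circ\tilde\phi_m)\,(J\phi_m)(\l_m\,\cdot)\,,
\]
and $\|G_m\|_{C^0}\le C\l_m^n$, since each $x$‑derivative of $X_m$ or of $\phi_m(\l_m\,\cdot)$ carries a factor $\l_m\sim m^{1/n}$.

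Step two checks that $\PSI_m\in\X(B_{C/m^{1/2n}},\l,\ell)$. As $B_m(x)$ is invertible with $\|B_m\|+\|B_m^{-1}\|\le C(A)$ by \eqref{jacobian phi estimates}, $v\mapsto|B_m(x)v|$ is a uniformly elliptic norm, and adding the linear term $-c_m\cdot v$ preserves convexity, one‑homogeneity and smoothness off the origin; the $x$‑Lipschitz and $\nu$‑$C^{2,1}$ bounds are immediate. The only delicate point, $\PSI_m\ge 1/\l$, holds for $m$ small because $|\phi_m(\l_m x)-y_m|\le C\l_m|x|\le Cm^{1/2n}$ on $B_{C/m^{1/2n}}$ forces $|c_m(x)|\le\|\s\|_{C^0}(1+Cm^{1/2n})<1-Cm^{1/2n}\le\|B_m(x)^{-1}\|^{-1}$, so $\PSI_m(x,v)\ge\delta|v|$ with $\delta=\delta(A,\s)>0$. (In particular $\PSI_m(0,v)=|v|+\s(y_m)(e_n\cdot v)$, whose Wulff shape meets $H$ in $S(\s(y_m))$, consistently with $F_m\to K(\s_0)$.) Step three is the almost‑minimality. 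By a standard volume‑fixing variation, the constrained minimality of $E_m$ implies it is a $(\bar C/\l_m,\rho_1\l_m)$‑minimizer of the unconstrained energy $E\mapsto\F_{A,\s}(E)+\int_E g$ among $E\subset A$, for fixed $\bar C,\rho_1$ depending on $A,\s,g$: the volume defect $\big||E'|-m\big|\le C|E_m\triangle E'|$ of a local competitor is absorbed by flowing $E_m$ along a field localized near a point of $\pa^*E_m$ of positive density at scale $\l_m$ lying at distance $\gtrsim\l_m$ from the perturbation region — such a point exists since $|E_m|=m\sim\l_m^n$ makes $E_m$ stick out of any ball of radius $\lesssim\rho_1\l_m$ once $\rho_1$ is a small dimensional fraction, and since $\F_{H,\s_0}(F_m)\to\psi(\s_0)=\F_{H,\s_0}(K)$ from \eqref{fine thm 2}, combined with the $L^1$ convergence $F_m-z_m\to K$, gives $\H^{n-1}\llcorner(\pa^*F_m\cap H)\rightharpoonup\H^{n-1}\llcorner(\pa K\cap H)$, so $\pa^*F_m$ has uniform $\H^{n-1}$‑mass on a fixed ball of $H$ disjoint from any set of diameter $<2\rho_0$. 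Now given $F'\subset H$ with $F'\triangle F_m\cc W\cc B_{C/m^{1/2n}}$, $\diam W<2\rho_0$, set $E':=\tilde\phi_m(F')\subset A$; then $E_m\triangle E'=\tilde\phi_m(F_m\triangle F')\cc\tilde\phi_m(W)$ has diameter $\le C\l_m\rho_0<\rho_1\l_m$ (choose $\rho_0$ small relative to $\rho_1$), so the unconstrained almost‑minimality applies. Subtracting the identities of Step one for $F_m$ and $F'$ (which coincide outside $W$), and using $|E_m\triangle E'|\le C\l_m^n|F_m\triangle F'|$ and $\|G_m\|_{C^0}\le C\l_m^n$, after dividing by $\l_m^{n-1}$ every error term is $\le C|F_m\triangle F'|$, giving $\PSI_m(F_m;H\cap W)\le\PSI_m(F';H\cap W)+\Lambda|F_m\triangle F'|$ with $\Lambda=\Lambda(A,\s,g)$. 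Finally $I_{3\rho_0}(F_m)\cc B_{C/m^{1/2n}}$ follows from $E_m\subset B_{y_m,C_0 m^{1/2n}}$ in \eqref{boundedness} after enlarging $C$, since $F_m\subset B_{Cm^{-1/2n}}$ and $3\rho_0 m^{1/2n}\to0$.

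I expect the main obstacle to be the volume‑fixing estimate in Step three with the correct error: restoring the volume defect by adjoining a small ball would only produce an error of order $|F_m\triangle F'|^{(n-1)/n}$, so one must instead use a flow whose ``volume rate'' remains quantitatively bounded below after the $\l_m$‑rescaling; and since no diameter bound on $F_m$ is yet available — Hausdorff convergence being produced only afterwards, through Lemma \ref{lemma hp thm3.5} and Theorem \ref{thm improved convergence to K} — one must locate a good portion of $\pa^*F_m$ away from $W$ using only the weak convergence of $\H^{n-1}\llcorner\pa^*F_m$ extracted from \eqref{fine thm 2}. The pull‑back identity, the ellipticity of $\PSI_m$, and the bookkeeping of the error terms are otherwise routine.
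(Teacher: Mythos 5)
Your overall strategy is the same as the paper's: pull back through $\phi_m$, rescale by $\l_m$, absorb the adhesion term into the integrand by a divergence-theorem identity with a vertical field (your linear term $-c_m(x)\cdot v$ is precisely the paper's $\s_m^*(x)\,e_n\cdot\nu$ for one particular choice of the Lipschitz extension $\s_m^*$), verify ellipticity from $\max_{\pa A}|\s|<1$ together with the fact that $\nabla\phi_m(\l_m x)$ is $C\,m^{1/2n}$-close to an isometry for $x\in B_{C/m^{1/2n}}$, and restore the volume constraint by a localized variation at cost linear in $|F\Delta F_m|$. Your Steps one and two, and the final error bookkeeping, are correct and match the paper.

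The genuine gap is the mechanism you propose for the uniform volume-rate bound in Step three, which is the crux of the lemma. First, a single ``fixed ball of $H$ disjoint from any set of diameter $<2\rho_0$'' cannot exist; one needs at least two candidate regions, one of which avoids the perturbation set $W$. Second, and more seriously, a uniform lower bound on the $\H^{n-1}$-mass of $\pa^*F_m$ in a ball does not give a uniform lower bound on the volume rate $\int_{\pa^*F_m}T\cdot\nu_{F_m}\,d\H^{n-1}$ of a flow: the normals may cancel, and adapting $T$ to a ``point of positive density'' of $\pa^*F_m$ produces constants depending on $m$ unless further structure is invoked. Moreover, the weak-$*$ convergence of the restriction of $\H^{n-1}$ to $\pa^*F_m\cap H$ towards that of $\pa K\cap H$ is not an immediate consequence of \eqref{fine thm 2} when $\s_0\neq0$, since convergence of the mixed energy $\F_{H,\s_0}$ does not by itself split into separate convergence of the interior and trace terms. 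The clean route -- the one the paper takes -- avoids perimeter measures entirely: fix beforehand two points $x,y\in H\cap\pa K$ with disjoint closed balls $\ov B_{x,\tau_0},\ov B_{y,\tau_0}\cc H\cap\Om_m$, and a field $T\in C^\infty_c(B_{x,\tau_0};\R^n)$ with $\int_K\Div T=1$; for any admissible $W$ (of diameter $<2\rho_0$) one of the two balls misses $W$; then the weighted volume rate of $f_t=\Id+t\,T$ acting on $F_m-z_m$ is $\g'(0)=\int_{F_m-z_m}\Div(\psi_m^*\,T)$, which is $\ge 1/2$ purely from the $L^1$ bound \eqref{epsilon0} and the bounds \eqref{psim star varie} on $\psi_m^*$; no density point and no convergence of perimeter measures is needed, and the implicit function argument then restores the constraint with $|\g^{-1}(v)|\le C|v|$. (Performing the variation at the level of $E_m$ inside $A$, as you suggest, is also viable, but then you must additionally keep the deformation inside $A$ and work at scale $\l_m$ with the true volume; the paper does it in $H$ after the pull-back, at the price of fixing the weighted volume $\int\psi_m$ rather than the Euclidean one.) With this replacement of your volume-fixing mechanism, your argument closes.
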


\begin{proof} Let us set
\[
G_m=\phi^{-1}_m(E_m)\,,\qquad \Om_m=\frac{U_m}{\l_m}\,,\qquad F_m=\frac{G_m}{\l_m}\,.
\]
By \eqref{lambda m ordine} and $\phi_m(0)=y_m$ we have $G_m\subset B_{C\,m^{1/2n}}$, and thus
\begin{equation}
  \label{inclusioni Fm in palle}
F_m\subset B_{C\,m^{1/2n}/\l_m}\subset B_{C/m^{1/2n}}\,,\qquad\forall m<m_0\,,
\end{equation}
where in the last inclusion we have used \eqref{lambda m ordine} again. If we define, for $x\in U_m$, $y\in U_m\cap\pa H$, and $\nu\in S^{n-1}$,
\[
\Phi_m(x,\nu)=|\cof\nabla\phi_m(x)\,\nu|\,,\qquad \s_m(y)=\s(\phi_m(y))\,\Phi_m(y,e_n)\,,\qquad g_m(x)=g(\phi_m(x))\,J\phi_m(x)\,,
\]
then $G_m$ is a minimizer in the variational problem
\begin{equation}
  \label{Gm problem}
  \inf\Big\{\PHI_m(G;H)+\int_{\pa^*G\cap\pa H}\s_m +\int_{G}g_m: G\subset H\cap U_m\,, \int_{G}J\phi_m=m\Big\}\,.
\end{equation}
Indeed, if $G$ is a competitor in \eqref{Gm problem}, then $E=\phi_m(G)$ satisfies $E\subset A$ and $|E|=m$, and, by the area formula,
\begin{eqnarray*}
\F_{A,\s}(E)+\int_{E}g=P(E;A)+\int_{\pa A\cap\pa^*E}\s+\int_{E}g
=\PHI_m(G;H)+\int_{\pa H\cap\pa^*G}\s_m+\int_{G}g_m\,.
\end{eqnarray*}
Similarly, if we set, for $x\in\Om_m$, $y\in\Om_m\cap\pa H$, and $\nu\in S^{n-1}$,
\[
\hat{\Phi}_m(x,\nu)=\Phi_m(\l_m\,x,\nu)\,,\qquad \hat{\s}_m(y)=\s_m(\l_m\,y)\,,\qquad \hat{g}_m(x)=g_m(\l_m\,x)\,,
\]
then $F=G/\l_m\subset \Om_m\cap H$ if and only if $G=\l_m\,F\subset U_m\cap H$, with
\[
\PHI_m(G;H)+\int_{\pa^*G\cap\pa H}\s_m+\int_G\,g_m=\l_m^{n-1}\,\Big(\hat{\PHI}_m(F;H)+\int_{\pa^*F\cap \pa H}\hat{\s}_m\Big)+\l_m^n\int_F\,\hat{g}_m\,.
\]
Hence the fact that $G_m$ is a minimizer in \eqref{Gm problem} implies that $F_m$ is a minimizer in
\begin{equation}
  \label{Fm problem}
  \inf\Big\{\hat{\PHI}_m(F;H)+\int_{\pa^*F\cap\pa H}\hat{\s}_m +\l_m\,\int_{F}\hat{g}_m: F\subset H\cap \Om_m\,, \int_{F}\psi_m=\frac{m}{\l_m^n}\Big\}\,,
\end{equation}
provided one sets
\[
\psi_m(y)=J\Phi_m(\l_m\,y)=|\det\nabla\phi_m(\l_m\,y)|\,,\qquad \forall y\in \Om_m\,.
\]
It is useful to notice that by  \eqref{jacobian phi estimates} and $\l_m\,F_m=G_m\subset B_{C\,m^{1/2n}}$ (recall \eqref{lambda m ordine} and \eqref{inclusioni Fm in palle})
\begin{equation}
  \label{psim star varie}
  \|\psi_m-1\|_{L^\infty(F_m)}\le C\,m^{1/2n}\,,\qquad \|\nabla\psi_m\|_{L^\infty(F_m)}\le C\,\l_m,\qquad \|\psi_m\|_{C^{1,1}(\Om_m)}\le C\,,
\end{equation}
for a constant $C$ depending on $A$ only.

We now want to exploit the minimality of $F_m$ in \eqref{Fm problem} to show the following uniform almost minimality property of $F_m$: for every $F\subset H$ such that $F\Delta F_m\cc B_{z,2\,\rho_0}\cc \Om_m$ for some $z\in H$, one has
\[
\PSI_m(F_m;H)\le\PSI_m(F;H)+\Lambda\,|F\Delta F_m|\,,
\]
for some $\Psi_m\in\X(B_{C/m^{1/2n}},\l,\ell)$. Of course the difficulty here is that such a competitor $F$ may fail to belong to the competition class in \eqref{Fm problem}. However, as $F$ is close to $F_m$, then $\int_F\psi_m$ should be close to $\int_{F_m}\psi_m$. We should be possible to slightly modify $F$ into a new competitor $F'$ with $\int_{F'}\psi_m=\int_{F_m}\psi_m$, while keeping track of the change in surface energy. This is what we do in the next argument, which is loosely based on \cite{Almgren76}, see also \cite[Section 29.6]{maggiBOOK}.

For a value of $\e_0$ to be chosen later depending on $K$ (thus on $\s_0$) only, let us now decrease the value of $m_0$ so to entail
\begin{equation}
  \label{epsilon0}
  |(F_m-z_m)\Delta K|<\e_0\,,\qquad\forall m<m_0\,,
\end{equation}
where $z_m\in\pa H$ as in \eqref{lambda m ordine}. Let us fix $F\subset H$ such that $F\Delta F_m\cc B_{z,2\,\rho_0}\cc \Om_m$ for some $z\in H$, where $\rho_0$ is also to be properly chosen. We pick $x,y\in H\cap\pa K$ (independently from $F_m$) and fix $\tau_0>0$ so that
\[
\ov{B}_{x,\tau_0}\cap \ov{B}_{y,\tau_0}=\emptyset\,,\qquad B_{x,\tau_0}\cup B_{y,\tau_0}\cc B_{s_0/2\,m_0^{1/n}}\cc H\cap\Om_m\quad\forall m<m_0\,.
\]
(The last condition follows from the fact that $B_{s_0}\cc U_m$, so that we can entail $K\cc B_{s_0/\l_m}\cc \Om_m$ for every $m<m_0$, and thanks to the fact that $\l_m/m^{1/n}\to 1$.) Notice that, up to decreasing the value of $\rho_0$, either $\ov{B}_{z-z_m,2\,\rho_0}\cap \ov{B}_{x,\tau_0}=\emptyset$, or $\ov{B}_{z-z_m,2\,\rho_0}\cap \ov{B}_{y,\tau_0}=\emptyset$. Without loss of generality we assume that
\begin{equation}
  \label{disjoint}
  \ov{B}_{z-z_m,2\,\rho_0}\cap \ov{B}_{x,\tau_0}=\emptyset\,,
\end{equation}
and then we fix $T\in C^\infty_c(B_{x,\tau_0};\R^n)$ with
\[
1=\int_{\pa K}T\cdot\nu_K=\int_K\,\Div\,T\,.
\]
(The existence of $T$ follows from $x\in\pa K$, of course.) Correspondingly, we can find $t_0>0$ (depending on $\tau_0$ and $\|T\|_{C^1(\R^n)}$, thus on $K$, thus on $\s_0$) such that for each $|t|<t_0$ the map $f_t=\Id+t\,T$ defines a diffeomorphism of $\R^n$ with $\{f_t\ne\Id\}\cc B_{x,\tau_0}$. Now let us consider
\[
\psi_m^*(w)=\psi_m(w+z_m)=J\Phi_m(\l_m\,(w+z_m))\,,\qquad w\in \Om_m-z_m\,,
\]
and define a smooth function $\g:(-t_0,t_0)\to\R$ by setting
\[
\g(t)=\int_{f_t(F_m-z_m)}\psi_m^*-\int_{F_m-z_m}\psi_m^*=t\,\int_{F_m-z_m}\,\Div(\psi_m^*T)+O(t^2)\,.
\]
Here, thanks to \eqref{psim star varie}, $|O(t^2)|\le C\,t^2$ for a constant $C$ depending on $\|T\|_{C^1(\R^n)}$ and $\|\psi_m^*\|_{C^{1,1}(F_m-z_m)}$, thus on $A$ and $\s_0$, only. Also,
\begin{eqnarray*}
  \Big|1-\int_{F_m-z_m}\,\Div(\psi_m^*T)\Big|&\le&
  \Big|\int_K\Div\,T-\int_{F_m-z_m}\,\psi_m^*\Div T+T\cdot\nabla \psi_m^*\Big|
  \\
  &\le&\|T\|_{C^1(\R^n)}\,\Big(|K\Delta(F_m-z_m)|+\int_{F_m-z_m}|\psi_m^*-1|+|\nabla\psi_m^*|\Big)
  \\
  &\le&\|T\|_{C^1(\R^n)}\,\big(\e_0+m^{1/2n}+\l_m\big)\,,
\end{eqnarray*}
where we have used both \eqref{psim star varie} and \eqref{epsilon0}. In particular, by \eqref{lambda m ordine}, if we further decrease the values of $t_0$, $m_0$ and $\e_0$ depending on  $\|T\|_{C^1(\R^n)}$, then we find
\begin{equation}
  \label{implicit}
  \g(0)=0\,,\qquad \g'(0)\ge\frac12\,,\qquad \Lip(\g';(-t_0,t_0))\le C\,\,.
\end{equation}
By \eqref{implicit}, up to further decreasing the value of $t_0$, we can find $\eta_0>0$ (depending on $A$, $g$ and $\s$) such that $\g^{-1}$ is well-defined on $(-2\eta_0,2\eta_0)$ with
\begin{equation}
  \label{gF meno uno}
  |\g^{-1}(v)|\le C\,|v|\,,\qquad\mbox{for every $|v|\le2\eta_0$}\,.
\end{equation}
Recalling that $F\subset H$ with $F\Delta F_m\cc B_{z,2\,\rho_0}\cc \Om_m$, we set
\[
v=\int_{F_m-z_m}\psi_m^*-\int_{F-z_m}\psi_m^*\,.
\]
Up to further decreasing the value of $\rho_0$ we find
\begin{equation}
  \label{stimetta}
|v|\le\|\psi_m\|_{C^0(\Om_m)}\,|F\Delta F_m|\le C\,\om_n\,\rho_0^n<2\,\eta_0\,,
\end{equation}
so that by \eqref{gF meno uno} we can compute $\g^{-1}(v)$ and correspondingly define $F'\subset H$ by letting
\[
F'-z_m=\Big((F-z_m)\cap B_{z-z_m,2\,\rho_0}\Big)\cup\Big(f_{\g^{-1}(v)}(F_m-z_m)\setminus B_{z-z_m,2\,\rho_0}\Big)\,.
\]
Notice that, by construction, $F'$ and $F_m$ are equal on $H\setminus(B_{z,2\rho_0}\cup B_{x+z_m,\tau_0})$, so that
\begin{eqnarray*}
  \int_{F'}\psi_m-\int_{F_m}\psi_m&=&
  \int_{F\cap B_{z,2\rho_0}}\psi_m-\int_{F_m\cap B_{z,2\rho_0}}\psi_m
  \\
  &&+\int_{f_{\g^{-1}(v)}(F_m-z_m)\cap B_{x,\tau_0}}\psi_m^*-
  \int_{(F_m-z_m)\cap B_{x,\tau_0}}\psi_m^*
  \\
  &=&
  \int_F\psi_m-\int_{F_m}\psi_m+\int_{f_{\g^{-1}(v)}(F_m-z_m)}\psi_m^*-
  \int_{F_m-z_m}\psi_m^*
  \\
  &=&
  \int_F\psi_m-\int_{F_m}\psi_m+\g(\g^{-1}(v))=0\,.
\end{eqnarray*}
Hence, $F'$ is a competitor for \eqref{Fm problem}, and since $B_{x,\tau_0}\cc H$ (thus $B_{x+z_m,\tau_0}\cc H$ too) by comparing $F_m$ to $F'$ we find
\begin{eqnarray}\label{treno4}
  &&\hat{\PHI}_m(F_m;B_{x+z_m,\tau_0}\cup (B_{z,2\rho_0}\cap H))+\int_{B_{z,2\rho_0}\cap\pa^*F_m\cap\pa H}\hat{\s}_m
  \\\nonumber
  &\le&
  \hat{\PHI}_m(F';B_{x+z_m,\tau_0}\cup (B_{z,2\rho_0}\cap H))+\int_{B_{z,2\rho_0}\cap\pa^*F\cap\pa H}\hat{\s}_m +\l_m\,\|\hat{g}_m\|_{C^0(\Om_m)}\,|F'\Delta F_m|\,,
\end{eqnarray}
where in writing the second term on the second line we have taken into account that $\pa^*F$ and $\pa^*F'$ are equal on $B_{z,2\rho_0}\cap\pa H$. In order to exploit \eqref{treno4} it is useful to show that
\begin{eqnarray}\label{treno3}
  |(F'\Delta F_m)\cap B_{x+z_m,\tau_0}|+|\hat{\PHI}_m(F_m;B_{x+z_m,\tau_0})-\hat{\PHI}_m(F';B_{x+z_m,\tau_0})|\le C\,|F\Delta F_m|\,.
\end{eqnarray}
To begin with, by \cite[Lemma 17.9]{maggiBOOK}
\begin{eqnarray}\nonumber
  |(F'\Delta F_m)\cap B_{x+z_m,\tau_0}|&\le&\big| f_{\g^{-1}(v)}(F_m-z_m)\Delta(F_m-z_m)\big|
  \\\nonumber
  &\le& C\,|\g^{-1}(v)|\,P(F_m-z_m;B_{x,\tau_0})\le\,C\,|v|\,P(F_m;H)\,;
\end{eqnarray}
since $P(F_m;H)=\l_m^{1-n}\,P(G_m;H)\le C\,m^{(1-n)/n}\,P(E_m)$, \eqref{upper bound PEm} gives $P(F_m;H)\le C$ , and thus by \eqref{psim star varie} and the definition of $v$
\begin{eqnarray}
  \nonumber
  |(F'\Delta F_m)\cap B_{x+z_m,\tau_0}|&\le& C\,\Big|\int_{F_m}\psi_m-\int_{F}\psi_m\Big|\le C\,\|\psi_m\|_{C^0(\Om_m)}\,|F_m\Delta F|
  \\\label{treno1}
  &\le&C\,|F_m\Delta F|\,.
\end{eqnarray}
This proves part of \eqref{treno3}. To complete the proof of \eqref{treno3}, given $p\in\pa^*F_m$, let $\{\tau_i(p)\}_{i=1}^{n-1}$ denote an orthonormal basis of the approximate tangent space of $\pa^*F_m$ at $p$, chosen so that $\nu_{F_m}(p)=\wedge_{i=1}^{n-1}\tau_i(p)$. Since $\hat{\Phi}_m$ has finite Lipschitz constant on $\Om_m\times\R^n$, one has
\[
\Big|\hat\Phi_m\Big(f_t(p),\bigwedge_{i=1}^{n-1}df_t(p)[\tau_i(p)]\Big)-\hat\Phi_m(p,\nu_{F_m}(p))\Big|\le C\,|t|\,,
\]
for every $|t|<t_0$. By the area formula, setting $t=\g^{-1}(v)$ for the sake of brevity, we have
\begin{eqnarray}\nonumber
&&|\hat{\PHI}_m(F_m;B_{x+z_m,\tau_0})-\hat{\PHI}_m(F';B_{x+z_m,\tau_0})|
\\\nonumber
&\le&\int_{B_{x+z_m,\tau_0}\cap\pa^*F_m}
\Big|\hat\Phi_m\Big(f_t(p),\bigwedge_{i=1}^{n-1}df_t(p)[\tau_i(p)]\Big)-\hat\Phi_m(p,\nu_{F_m}(p))\Big|\,d\H^{n-1}(p)
\\\label{treno2}
&\le& C\,|\g^{-1}(v)|\,P(F_m;B_{x+z_m,\tau_0})\le C\,|F\Delta F_m|\,,
\end{eqnarray}
where in the last inequality we have argued as in the proof of \eqref{treno1}. This proves \eqref{treno3}, which combined with \eqref{treno4} gives us
\begin{eqnarray}\label{treno5}
  \hat{\PHI}_m(F_m;H)+\int_{\pa^*F_m\cap\pa H}\hat{\s}_m
  \le
  \hat{\PHI}_m(F;H)+\int_{\pa^*F\cap\pa H}\hat{\s}_m +C\,|F\Delta F_m|\,.
\end{eqnarray}
Finally, let $\s_m^*:\R^n\to\R$ be a Lipschitz function such that $\s_m^*=\hat\s_m$ on $\pa H$ and
\begin{equation}
  \label{sigma ext}
  \Lip(\s_m^*)=\Lip(\hat\s_m;\pa H)\,.
\end{equation}
(There is a huge freedom in the choice of $\s_m^*$ and we shall exploit it later.) By the divergence theorem
\begin{eqnarray*}
\int_{\pa^*F\cap \pa H}\hat\s_m& = &\int_{\pa^*F\cap \pa H}(-e_n\,\s_m^*)\cdot(-e_n)=\int_{\pa^*F\cap H}\s_m^*\, e_n\cdot\nu_{ F}+\int_{ F}\Div(-e_n\s_m^*)\\
& = &\int_{H\cap\pa  F}\s_m^*\,e_n\cdot\nu_{ F}+\int_{ F}(-e_n)\cdot\nabla\s_m^*\,,
\end{eqnarray*}
so that
\[
\hat{\PHI}_m( F;H)+\int_{\pa^*F\cap\pa H}\hat\s_m =\int_{H\cap\pa^*F}(\hat{\Phi}_m(x,\nu_F)+\s_m^*e_n\cdot\nu_{ F})-\int_{ F}e_n\cdot\nabla\s_m^*\,.
\]
In conclusion, if we set for $x\in\Om_m$ and $\nu\in \R^n$,
\begin{equation}
  \label{psim integrand}
  \Psi_m(x,\nu)=\hat{\Phi}_m(x,\nu)+\s_m^*(x)\,e_n\cdot\nu=|{\rm cof}\nabla\phi_m(\l_m x)\nu|+\s_m^*(x)\,e_n\cdot\nu\,,
\end{equation}
then \eqref{treno5} implies
\begin{equation}
  \label{Fm minimality 2}
\PSI_m(F_m;H)\le \PSI_m(F;H)+\Lambda\,|F\Delta F_m|\,,
\end{equation}
whenever $F\subset H$ is such that $\diam(F\Delta F_m)<2\rho_0$. We now claim that
\[
\Psi_m\in\X(B_{C/m^{1/2n}},\l,\ell)\,,\qquad\forall m<m_0\,,
\]
and for suitable constants $\l$ and $\ell$. It is clear from \eqref{psim integrand} that $\Psi_m$ is lower semicontinuous on $\Om_m\times\R^n$ with $\Psi_m(x,\cdot)$ convex, one homogeneous and with restriction to $S^{n-1}$ of class $C^{2,1}$ for every fixed $x\in\Om_m$. Similarly, one easily deduces from \eqref{lambda m ordine 2} and \eqref{sigma ext} an upper bound on $\Psi_m$ and Lipschitz-type bounds on $\Psi_m(\cdot,\nu)$, $\nabla\Psi_m(\cdot,\nu)$, $\Psi_m(x,\cdot)$, $\nabla\Psi_m(x,\cdot)$ and $\nabla^2\Psi_m(x,\cdot)$ on $\Om_m\times\R^n$ (recall that here $\nabla$ and $\nabla^2$ denote derivatives in the $\nu$ variable). The restriction of $x$ to $B_{C/m^{1/2n}}$, and a more precise choice of $\s_m^*$, come to play in order to check that
\[
\Psi_m(x,\nu)\ge\frac1\l\,,
\qquad
\big|\nabla^2\Psi_m(x,\nu)[\tau,\tau]\big|\ge\frac{|\tau|^2}\l\,,\qquad\forall \tau\in\nu^\perp\,,
\]
whenever $x\in B_{C/m^{1/2n}}$ and $\nu\in S^{n-1}$. Indeed, by \eqref{marina1} and \eqref{lambda m ordine} we have
\begin{equation}
  \label{elliptic check 1}
  |\cof(\nabla\phi_m(\l_m\,x))\nu|\ge|\cof(\nabla\phi_m(0))\nu|-C\,\l_m\,|x|=1-C\,\l_m\,|x|\ge 1-C\,m^{1/2n}\,,
\end{equation}
for every $x\in B_{C/m^{1/2n}}$ and $\nu\in S^{n-1}$. By an analogous argument
\begin{equation}
\label{sigmahat bounds}
|\hat{\s}_m(x)-\s_0|=|\s(\phi_m(\l_m x))|\cof(\nabla\phi_m(\l_m\,x))e_n|-\s(\phi_m(0))|\le C\,\l_m\,|x|\le C\,m^{1/2n}\,,
\end{equation}
for every $x\in B_{C/m^{1/2n}}\cap\pa H$. The idea is then the following: having in mind \eqref{inclusioni Fm in palle}, we first pick $C$ so large to enforce $I_{3\rho_0}(F_m)\cc B_{C/m^{1/2n}}$ (in this way, all the variations considered in \eqref{Fm minimality 2} are contained in the domain of ellipticity of $\Psi_m$); next, we define $\s_m^*$ as a Lipschitz-preserving extension to $\R^n$ of the restriction to $B_{C/m^{1/2n}}$ of $\hat\s_m$, which is then truncated so to preserve the bounds \eqref{sigmahat bounds}, so that
\begin{equation}
\label{sigmastar bounds}
|\s^*_m(x)-\s_0|\le C\,m^{1/2n}\,,\qquad\forall x\in\R^n\,.
\end{equation}
By combining \eqref{elliptic check 1} and \eqref{sigmahat bounds} we find
\[
\Psi_m(x,\nu)\ge 1-|\s_0|-C\,m^{1/2n}\ge\frac{1-|\s_0|}{2}\,,
\]for every $x\in B_{C/m^{1/2n}}$ and $\nu\in S^{n-1}$,
provided $m_0$ is small enough. The Hessian bound is even simpler (as it does not involve the adhesion coefficient $\s$), and so the proof is complete.
\end{proof}

We are now ready to complete the proof of Theorem \ref{thm main}.

\begin{proof}[Proof of Theorem \ref{thm main}] Our starting point is given by Lemma \ref{thm convergenza a K} and Lemma \ref{lemma almost minimizers}.
The fact that $F_m$ is a $(\Lambda,r_0)$-minimizer of $\PSI_m$ in $(B_{C/m^{1/2n}},H)$ with $I_{3\rho_0}(F_m)\cc B_{C/m^{1/2n}}$ implies the existence of $\rho_1=\rho_1(\rho_0,\l,\Lambda)>0$ and $\k=\k(n,\l)>0$ such that if $r\le \rho_1$, then
\[
  \begin{split}
    |F_m\cap B_{x,r}|\ge\k\,|B_{x,r}\cap H|\,,&\qquad\forall x\in \ov{H}\cap\pa F_m\,,
    \\
    |F_m\cap B_{x,r}|\le (1-\k)\,|B_{x,r}\cap H|\,,&\qquad\forall x\in\ov{H}\cap\pa(H\setminus F_m)\,;
  \end{split}
\]
see, e.g. \cite[Lemma 2.8]{dephilippismaggiARMA}. In particular, if we set
\[
F_m^*=F_m-z_m\,,
\]
then for every $r\le\rho_1$
\begin{equation}\label{density estimates}
  \begin{split}
    |F_m^*\cap B_{x,r}|\ge\k\,|B_{x,r}\cap H|\,,&\qquad\forall x\in \ov{H}\cap\pa F_m^*\,,
    \\
    |F_m^*\cap B_{x,r}|\le (1-\k)\,|B_{x,r}\cap H|\,,&\qquad\forall x\in\ov{H}\cap\pa(H\setminus F_m^*)\,.
  \end{split}
\end{equation}
The density estimates \eqref{density estimates} combined with the $L^1$-convergence of $ F_m^*$ to $K$ imply the convergence of $M_m^*=\ov{H\cap\pa F_m^*}$ to $M_0=\ov{H\cap\pa K}$ in Hausdorff distance. Indeed, let $x\in M_m^*$ be such that
\[
\dist(x,M_0)=\sup_{y\in M_m^*}\dist(y,M_0)=\beta\,,
\]
and let $r=\min\{\beta,\rho_1\}$. Since $M_m^*\subset\ov{H}\cap\pa F_m^*$ and $M_m^*\subset \ov{H}\cap\pa(H\setminus F_m^*)$, we have that both estimates in \eqref{density estimates} hold at $x$ with $r=\min\{\beta,\rho_1\}$. Since $B_{x,\beta}\cap M_0=\emptyset$, we have $H\cap B_{x,r}\subset H\setminus K$ or $H\cap B_{x,r}\subset K$: in the first case
\[
\k|B_{x,r}\cap H|\le| F_m^*\cap B_{x,r}|\le| F_m^*\setminus K|\le\e_0\,,
\]
while in the second case one finds
\[
\k|B_{x,r}\cap H|\le|(H\cap B_{x,r})\setminus F_m^*|\le |K\setminus F_m^*|\le\e_0\,;
\]
in both cases, $r^n\le C\,\e_0$, so that, up to further decreasing the value of $\e_0$ one has $\beta^n\le C\,|K\Delta F_m^*|$. Since density estimates analogous to \eqref{density estimates} holds with $K$ in place of $F_m$ (for constants $\rho_1$ and $\k$ depending on $n$ and $\s_0$ only) we conclude that
\begin{equation}
  \label{hd Fm}
  \hd(M_0,M_m^*)^n\le C\,|F_m^*\Delta K|\,,\qquad\forall m<m_0\,.
\end{equation}
An important consequence of \eqref{hd Fm} is that provided $\e_0$ is small enough, then one has
\begin{equation}
  \label{inc}
\frac{K}2\subset F_m^*\,,\qquad\forall m< m_0\,.
\end{equation}
By exploiting this inclusion together with \eqref{upper bound gamma m} we see that $F_m^*$ satisfies \eqref{hp technical}, and thus conclude by means of Proposition \ref{lemma psi tau}-(iii) that
\[
C\,m^{1/n}\ge\F_{H,\s}(F_m^*)-\psi(\tau)\ge c(n,\tau)\,\inf_{z\in\pa H}|(F_m^*-z)\Delta K|^2\,.
\]
Since, by definition of $z_m$,
\[
|F_m^*\Delta K|=\inf_{z\in\pa H}|(F_m-z)\Delta K|=\inf_{z\in\pa H}|(F_m^*-z)\Delta K|\,,
\]
by also taking \eqref{hd Fm} we have found the quantitative estimates
\begin{equation}
  \label{quantitative Fmstar}
  |F_m^*\Delta K|\le C\,m^{1/2n}\,,\qquad \hd(M_0,M_m^*)\le C\,m^{1/2n^2}\,,\qquad\forall m<m_0\,.
\end{equation}
Let us now go back to a simpler consequence of \eqref{hd Fm}, namely
\begin{equation}
  \label{diameter Fmstar}
  I_{3\rho_0}(F_m^*)\cc B_C\,.
\end{equation}
By setting $\Psi_m^*(x,\nu)=\Psi_m(x+z_m,\nu)$, we find that each $F_m^*$ is a $(\Lambda,\rho_0)$-minimizer of $\PSI_m^*$ in $(B_C,H)$ where $\Psi_m^*\in\X(B_C,\l,\ell)$. We are thus in the position to apply Theorem \ref{thm improved convergence to K} to deduce that, for every $m<m_0$, $M_m^*$ is a compact connected orientable $C^{1,\a}$-hypersurface with boundary for every $\a\in(0,1)$ and there exists a diffeomorphism $f_m^*:M_0\to M_m^*$ such that
\[
\|f_m^*\|_{C^{1,\a}(M_0)}\le C_\a\,,\qquad\lim_{m\to 0}\|f_m^*-\Id\|_{C^1(M_0)}=0\,.
\]
Notice that, by construction, this last limit relation does not depend on the specific family of minimizers $E_m$ that we are considering, but just on $A$, $g$ and $\s$.

We now complete the proof of the theorem. We first notice that \eqref{diameter Fmstar} implies $\diam(F_m)\le C$, and thus, thanks to \eqref{lambda m ordine},
\[
\diam(\phi_m^{-1}(E_m))\le C\,\l_m\le C\,m^{1/n}\,.
\]
Since the maps $\phi_m^{-1}$ are uniformly Lipschitz, we conclude that $\diam(E_m)\le C\,m^{1/n}$. In turn, up to change our choice of $y_m\in\pa A$, we can improve the factor $m^{1/2n}$ in \eqref{boundedness reduction 2} into $m^{1/n}$ and repeat the above arguments (starting from step seven in the proof of Lemma \ref{thm convergenza a K}, continuing with the whole proof of Lemma \ref{lemma almost minimizers}, and including the current proof up to this point) using the more precise information
\begin{equation}
  \label{improve}
        E_m\subset B_{y_m,C\,m^{1/n}}
\end{equation}
in place of \eqref{boundedness reduction 2}. In this way we improve $\s(y_m)-\s_0\le C\,m^{1/2n}$ to
\begin{equation}
  \label{improve sigma}
  \s(y_m)-\s_0\le C\,m^{1/n}\,,
\end{equation}
we improve $|m^{-1/n}\l_m-1|\le C\,m^{1/2n}$ in \eqref{lambda m ordine} to
\begin{equation}
  \label{lambda m ordine improved}
  \Big|\frac{\l_m}{m^{1/n}}-1\Big|\le C\,m^{1/n}\,,
\end{equation}
(by the same argument used in \eqref{stay put}) and we replace $F_m\subset B_{C/m^{1/2n}}$ with
\[
F_m\subset B_C\,.
\]
By combining this last inclusion with \eqref{inc} (which gives $0\in F_m^*=F_m-z_m$) we find that $|z_m|\le C$, and thus
\[
z_m\,\l_m\in B_{C\,m^{1/n}}\cap\pa H\subset B_{s_0}\cap\pa H\subset \phi_m^{-1}(B_{2r_0}(y_m)\cap\pa A)\,.
\]
In particular there exists $p_m\in\pa A$ such that
\[
z_m\,\l_m=\phi_m^{-1}(p_m)\,,
\]
and by $\phi_m^{-1}(y_m)=0$ we find $|y_m-p_m|\le C\,m^{1/n}$. In this way \eqref{improve} and \eqref{improve sigma} give us
\begin{equation}
  \label{improve pm}
  E_m\subset B_{p_m,C\,m^{1/n}}\,,\qquad 0\le\s(p_m)-\s_0\le C\,m^{1/n}\,,
\end{equation}
that is \eqref{main theorem small diameter}. We now prove that \eqref{main theorem hd} holds with the linear isometry  $S_m=\nabla\phi_m^{-1}(y_m)$ ($S_m$ is a linear isometry since $S_m=R_m^{-1}$ for $R_m=\nabla\phi_m(0)$). Indeed, by \eqref{quantitative Fmstar} and with the notation $M_m=\ov{A\cap\pa E_m}$, we find
\begin{eqnarray}\label{lu}
  C\,m^{1/2n^2}\ge\hd(M_0,M_m^*)=\hd\Big(M_0,\frac{\phi_m^{-1}(M_m)-\phi_m^{-1}(p_m)}{\l_m}\Big)\,.
\end{eqnarray}
By
\[
\|\phi_m^{-1}-\phi_m^{-1}(p_m)-\nabla\phi_m^{-1}(p_m)(\cdot-p_m)\|_{C^0(B_{p_m,C\,m^{1/n}})}\le C\,m^{2/n}\,,
\]
and thanks to \eqref{lambda m ordine improved},
\[
\hd\Big(\frac{\phi_m^{-1}(M_m)-\phi_m^{-1}(p_m)}{\l_m},\frac{\nabla\phi_m^{-1}(p_m)(M_m-p_m)}{\l_m}\big)\le C\,m^{1/n}\,,
\]
so that by \eqref{lu} and  by linearity
\[
C\,m^{1/2n^2}\ge\hd\Big(M_0,\nabla\phi_m^{-1}(p_m)\Big(\frac{M_m-p_m}{\l_m}\Big)\Big)\,.
\]
By $|\nabla\phi_m^{-1}(p_m)-S_m|\le C\,m^{1/n}$, \eqref{diameter Fmstar} and \eqref{lambda m ordine improved} we find
\[
\hd\Big(S_m\Big(\frac{M_m-p_m}{m^{1/n}}\Big),\nabla\phi_m^{-1}(p_m)\Big(\frac{M_m-p_m}{\l_m}\Big)\Big)\le C\,m^{1/n}\,,
\]
and thus, in conclusion,
\[
C\,m^{1/2n^2}\ge\hd\Big(M_0,S_m\Big(\frac{M_m-p_m}{m^{1/n}}\Big)\Big)\,,
\]
that is \eqref{main theorem hd}. We are thus left to prove the existence of the diffeomorphism $f_m$ between $M_0$ and $M_m$ such that \eqref{diffeo convergence} holds. To this end, let us define
\[
f_m(x)=\phi_m\big(\l_m\,(f_m^*(x)+z_m)\big)=\phi_m(\l_m\,g_m(x))\,,\qquad x\in M_0\,,
\]
where
\[
g_m(x)=f_m^*(x)+z_m\,,\qquad x\in M_0\,.
\]
By construction, $f_m$ is a $C^{1,\a}$-diffeomorphism between $M_0$ and $M_m$, so that $M_m$ is a connected $C^{1,\a}$-hypersurface with boundary such that $\bd(M_m)\subset\pa A$. Now $|z_m|\le C$, while \eqref{diameter Fmstar} gives us $\|f_m^*\|_{C^0(M_0)}\le C$, so that
\begin{equation}
  \label{gm c0}
  \|g_m\|_{C^0(M_0)}\le C\,.
\end{equation}
If $v_m=R_m\,z_m$ (so that $|v_m|\le C$), then we have
\begin{eqnarray*}
f_m(x)-\big(y_m+\l_m(v_m+R_m\,x)\big)
&=&
\phi_m\big(\l_m\,g_m(x)\big)-\phi_m(0)-\nabla\phi_m(0)[\l_m\,g_m(x)]
\\
&&+\nabla\phi_m(0)[\l_m\,(f_m^*(x)-x)]
\end{eqnarray*}
and thus, by $\|\phi_m\|_{C^{1,1}(U_m)}\le C$,
\begin{equation}
  \label{C0}
  \big\|f_m-\big(y_m+\l_m(v_m+R_m\,x)\big)\big\|_{C^0(M_0)}\le C\,\l_m\Big(\l_m+\|f_m^*-\Id\|_{C^0(M_0)}\Big)\,.
\end{equation}
Similarly, if $\tau\in T_xM_0$, then
\[
\nabla^{M_0}f_m(x)[\tau]=\nabla\phi_m\big(\l_m\,g_m(x)\big)[\l_m\,\nabla f_m^*(x)\tau]\,,
\]
so that
\begin{eqnarray*}
  \nabla^{M_0}f_m(x)[\tau]-\l_m\,R_m\tau&=&\l_m\,\nabla\phi_m\big(\l_m\,g_m(x)\big)\,\big[\nabla f_m^*(x)[\tau]-\tau\big]
  \\
  &&+\l_m\,\Big(\nabla\phi_m\big(\l_m\,g_m(x)\big)-\nabla\phi_m(0)\Big)[\tau]\,,
\end{eqnarray*}
and hence
\begin{equation}
  \label{C1}
  \|\nabla^{M_0}f_m-\l_m\,R_m\|_{C^0(M_0)}\le C\,\l_m\,\Big(\l_m+\|f_m^*-\Id\|_{C^1(M_0)}\Big)\,.
\end{equation}
By \eqref{lambda m ordine improved}, \eqref{C0} and \eqref{C1} we finally find
\[
\big\|f_m-\big(y_m+m^{1/n}(v_m+R_m\,x)\big)\big\|_{C^1(M_0)}\le C\,m^{1/n}\,\Big(m^{1/n}+\|f_m^*-\Id\|_{C^1(M_0)}\Big)=o(m^{1/n})\,,
\]
where this limit relation depends on $A$, $\s$ and $g$ only, but not on the particular family of minimizers $E_m$ under consideration. Since $x\mapsto v_m+R_m x$ is an isometry of $\R^n$ we have completed the proof of \eqref{diffeo convergence}.
\end{proof}

\begin{remark}\label{rmk finale}
  {\rm In Remark \ref{remark intro} we claimed that $\g(m)=\psi(\s_0)\,m^{(n-1)/n}\,(1+O(m^{1/n}))$. By taking into account the upper bound \eqref{upper bound gamma m}, we are left to show that
  \[
  \F_{A,\s}(E_m)\ge\psi(\s_0)\,m^{(n-1)/n}\,(1+O(m^{1/n}))\,.
  \]
  Going back to the proof of Lemma \ref{thm convergenza a K}, we notice that \eqref{since now} can be improved into
  \[
  \frac{\F_{A,\s}(E_m)}{m^{(n-1)/n}}\ge(1-C\,m^{1/1n})\,\psi(\s(p_m))\,,
  \]
  since now we are replacing \eqref{boundedness reduction 2} with $E_m\subset B_{p_m,C\,m^{1/n}}$. We conclude as $\psi(\s(p_m))\ge\psi(\s_0)$.}
\end{remark}

\bibliography{references}
\bibliographystyle{is-alpha}

\end{document}